\def\style{} 
\def\parindentwidth{4mm}
\def\vs{18.4pt}
\newtheorem{theorem}{Theorem}[section]
\newtheorem{proposition}{Proposition}[section]
\newtheorem{lemma}{Lemma}[section]
\newtheorem{definition}{Definition}[section]
\newtheorem{note}{Note}[section]
\newtheorem{problem}{Problem}[section]
\newtheorem{example}{Example}[section]
\newtheorem{assumption}{Assumption}[section]
\newenvironment{proof}{{\parindent=0pt \par \bf Proof\quad}}{\hfill$\square$\par}
\def\sqr#1#2{{\vcenter{\hrule height.#2pt
        \hbox{\vrule width.#2pt height#1pt \kern#1pt
          \vrule width.#2pt}
        \hrule height.#2pt}}}
\def\square{\mathchoice\sqr34\sqr74\sqr{2.1}3\sqr{1.5}3}
\def\matsp#1{\hbox to 10pt {#1}}
\def\diag{{\rm diag}}
\def\trace{{\rm tr\, }}
\def\rref#1{(\ref{#1})}
\def\bunken(#1){$^{#1}$}
\def\tsection(#1)
\bf \thesection{.}\quad{#1}}
\def\tsubsection(#1)
\thesubsection\quad{#1}}
\def\tsectionappen(#1)
\bf \thesection{}\quad{#1}}
\def\ttsection(#1)
\bf \thesection{}\quad{#1}}
\newenvironment{equation*}{\[}{\]}
\newsavebox{\rdummy}
\newenvironment{r-commentout}{\global\setbox\rdummy = \vbox\bgroup}{\egroup}
\newsavebox{\tdummy}
\newenvironment{t-commentout}{\global\setbox\tdummy = \vbox\bgroup}{\egroup}
\def\rregi{\tilde{\phi}_b}
\def\rregj{\tilde{\phi}_c}
\def\rregh{\tilde{\phi}_a}
\def\rregk{\tilde{\phi}_d}
\def\reg{\tilde{\phi}_1}
\def\regk{\tilde{\phi}_k}
\def\th1{\tilde{\theta}_1}
\def\tphi{\tilde{\phi}}
\def\goptv{g_{\rm v}}
\def\goptf{g_{\rm f}}
\def\qy{{y}'}
\def\qY{{Y}'}
\def\qyone{{y}_{\langle{1}\rangle}'}
\def\qytwo{{y}_{\langle{2}\rangle}'}
\def\qythree{{y}_{\langle{3}\rangle}'}
\def\kappaY{\kappa_y}
\def\i{t}
\def\j{{\langle{j}\rangle}}
\def\S{{\cal S}^{y}}
\def\Phi{{\cal S}^{\phi}}
\def\I{{\cal S}^{\reg}}
\def\V{{\sf V}}
\def\E{{\sf E}}
\def\regjp{(\tilde{\phi}_{1})_{\j}'}
\def\Ih{{\cal S}^{\rregh}}
\def\Ii{{\cal S}^{\rregi}}
\def\Ij{{\cal S}^{\rregj}}
\def\Ik{{\cal S}^{\rregk}}
\def\II{{\Ih \times \Ii \times \Ij \times \Ik}}
\def\den{\H_0
+ 
\sum_i
\H_{i}(\tilde{\phi}_i-\tilde{\phi}_{i}')
+
\sum_{i,j} 
\H_{ij}(\tilde{\phi}_i-\tilde{\phi}_{i}')(\tilde{\phi}_j-\tilde{\phi}_{j}')
+
O((\tilde{\phi}_i-\tilde{\phi}_{i}')(\tilde{\phi}_j-\tilde{\phi}_{j}')(\tilde{\phi}_k-\tilde{\phi}_{k}'))}
\def\H{{\delta}}
\def\star{{}}
\def\plim{{\rm plim}}
\def\M{{M'}}
\def\Mo{M}
\def\figwidth{150pt}
\def\minipagewidth{7.8cm}
\def\Qopt{Q$_{\rm opt}$}
\def\mathbold(#1){\mbox{\boldmath $#1$}}
\title{
\LARGE \bf
\vspace{-1cm}
Optimal Quantization of Signals for System Identification\thanks{The
technical report/conference versions of this paper are in
\cite{Tsumura:CUED02andECC03,Tsumura:METR04-10,Tsumura:METR05-04}.}
}
\author{Koji Tsumura\thanks{Koji Tsumura is with 
Department of Information Physics and Computing, 
The University of Tokyo, 
Hongo 7--3--1, Bunkyo-ku, 
Tokyo 113--8656, Japan, 
tel: +81--3--5841--6891, fax: +81--3--5841--6886,
e-mail: {\tt\small tsumura@i.u-tokyo.ac.jp}}%
}
\begin{document}

\maketitle

\if a\style
\else
   \setlength{\baselineskip}{\vs}
\fi

\vspace{-.5cm}


\noindent
{\bf Abstract:}
In this paper, we examine the optimal quantization of signals for system
identification.  We deal with memoryless quantization for the output
signals and derive the optimal quantization schemes.  The objective
functions are the errors of least squares parameter estimation subject
to a constraint on the number of subsections of the quantized signals or
the expectation of the optimal code length for either high or low
resolution.  In the high-resolution case, the optimal quantizer is found
by solving Euler--Lagrange's equations and the solutions are simple
functions of the probability densities of the regressor vector.  In
order to clarify the minute structure of the quantization, the optimal
quantizer in the low resolution case is found by solving recursively a
minimization of a one-dimensional rational function.  The solution has
the property that it is coarse near the origin of its input and becomes
dense away from the origin in the usual situation.  Finally the required
quantity of data to decrease the total parameter estimation error,
caused by quantization and noise, is discussed.  

\noindent
{\bf Keywords:}
system identification, quantization, networked control, least squares method, FIR model, 
entropy


\section{Introduction}\label{sec:introduction}

The recent rapid improvement in the transmission capacity of computer
networks has made long-distance automatic control more realistic and
the necessity of understanding the effects of transmission limitations
on the information in control systems has become more widely accepted. In
particular, quantization of the signals to reduce the
information content of the transmitted signals in control systems has been
discussed actively by several control research groups during the last few
years and interesting results have been achieved.  

The problem of signal quantization has a long history going back to
the 1940s, and is one of main themes in the area of information theory (e.g., see
\cite{Gray:TIT98}). The problem is to attain low
distortion between the original and the quantized signals subject to
constraints on the amount of information.  Naturally, the situations and
objectives for data transmission and those for control systems are
essentially different and the need for research on the latter
case has been recognized.  
However, although elementary discussion in the control
community dates from the 1970s (e.g., see \cite{Curry:book}), rigorous analysis
did not begin until the late 1980s.
The main difficulty of quantization in control systems lies
in their dynamics; the result by \cite{Delchamps:SCL89,Delchamps:IEEE90} is
recognized as a breakthrough, in which the behavior of control
systems and their stability or state estimation are analyzed in detail.
In the last few years, stabilization problems of quantized systems
have been actively investigated in several different situations, e.g.,
\cite{Wong:IEEE97,Wong:IEEE99,Brockett:IEEE00,Nair:scl00,Elia:IEEE01,Nair:cdc02,Tsumura:CDC03,Nair:siam04}.
Of these, a logarithmic quantizer was shown to be coarsest, in some
sense, to achieve a kind 
of asymptotic stability \cite{Elia:IEEE01} and 
reveal the variations in the importance of signals, depending on their
magnitudes and the directions in the signal space, from the viewpoint
of system control. 

With this background, our interests naturally shifted to the system
identification problem; that is, what quantization scheme is {\it
optimal} for system identification?  We expect that the answer to this
question will clarify the amount of information in the signals necessary for
parameter estimation.  Unfortunately however, compared to the research activity in the
stabilization or estimation problem, the optimal quantization
problem for system identification \cite{Gevers:book} has not been
adequately considered.  
The main subject of this paper is to answer this fundamental question.  

In this paper, we consider the optimal memoryless quantization problem of output
signals that are used for parameter estimation.  The identified system
is a simple single input single output (SISO) finite impulse response
(FIR) model, in order to reveal the essential properties of
the optimal quantization in system identification and help intuitively
understanding it.  By {\it optimality} in this paper we mean the
minimization of the variance of the parameter estimation error given by
the least squares method with a constraint on the number of quantization
steps or the expectation of the code length of the optimally coded
quantized signals.  
We consider this problem for two cases: 
(1) high quantization resolution with weak assumptions on input,
(2) low quantization resolution, 
however with some specific assumptions on input. 
The difficulty with the problem is in the complex correlation between
the input signals and the quantization errors, and solving this is the
key for the optimization problem.  

In the high resolution case (Section~\ref{section:main}), 
we introduce a key concept, the density of the number of quantized
subsections, and by using calculus of variations, analytic solutions are
derived subject to the constraint on the number of quantization steps or the
optimal code length. 
The solutions are functions of the probability density of the input
signals and we can rigorously calculate the profile of the density of the
number of the optimally quantized subsections.  
Moreover, these results suggest several insights into system
identification with finite information.  We illustrate
these facts for some cases and describe the complexity of the problem
of system identification.  

The results in Section~\ref{section:main} show that the quantization
resolution around the origin of the signals relatively becomes coarse in
usual cases.  In order to clarify the minute structure of the
quantization and complement the results in Section~\ref{section:main}, 
we consider the low resolution case in Section~\ref{sec:prev}.  
We give the optimal quantizer with a condition of uniform distribution
of input signals.
The optimal quantizer is given by minimizing a one-dimensional rational
function recursively.   
In a special case, we show that the optimal quantization is not uniform and 
it is coarse near the origin of the quantized signals and becomes
dense away from the origin.  
This fundamental property is opposite to the case of stabilization in 
\cite{Elia:IEEE01} and reveals duality between system
identification and stabilization.  

Finally, in Section~\ref{sec:eval}, we compare the effects of the
resolution of quantization and the I/O data length.  
The results show that the former is more effective for decreasing
quantization error in the estimated system parameters, on the other
hand, the latter is more effective in reducing noise error.  
From this, there exists a trade-off between these two error terms  
subject to a constant amount of data and we can find an
appropriate quantizer resolution to balance them by using the results in
Section~\ref{sec:eval}.  

Note that the main purpose of this paper is to reveal the essential
properties of the optimal quantization for system identification;
therefore, the focus of this paper is on the analysis of this problem
and not on practical system identification methods.  

In this paper, 
most of the proofs of theorems, lemmas, or
propositions are collected in the appendix for ease of understanding 
the main theme and the outline of this paper.  
Refer to these in Appendix~\ref{appendex:proofs}
if necessary.  
 
\newpage

\noindent
{\bf Notation:} 

\begin{minipage}[t]{8.2cm}
\baselineskip 16pt
$d_j$: eq.~\rref{eq:part} and \rref{eq:dd}
\hfill\break
$\E[x]$: expectation of $x$, \,  
$\E_{\bullet}[x]$: eq.~\rref{eq:limitE}
\hfill\break
$e(\i) = \qy(\i)-y(\i)$: quantization error at $\i$
\hfill\break
$e(\reg(\i)) = e(\i)$: quantization error specified by $\reg$
\hfill\break
$f(x)$: probability density of $x$
\hfill\break
$g(\bullet)$: eq.~\rref{eq:defofg}
\hfill\break
$H(\bullet)$: entropy of $\bullet$, \, 
$H(\bullet,\bullet)$, $H_{\rm d}(\bullet)$: eq.~\rref{eq:entropy}
\hfill\break
$j$: index of quantized subsections
\hfill\break
$\Mo$: number of quantization subsections
\hfill\break
$\M$: associate number of quantization subsections \rref{eq:defofM}
\hfill\break
$N$: data length
\hfill\break
$n$: order of FIR model 
\hfill\break
$O(\bullet)$, $o(\bullet)$: orders of $\bullet$ (Landau's symbols) 
\hfill\break
${\cal P}(\bullet)$: eq.~\rref{eq:symmain}
\hfill\break
\end{minipage}
\hfill
\begin{minipage}[t]{8.2cm}

\baselineskip 16pt
$r_j$, $r_j^o$: ratio or optimal ratio of $d_j$ and $d_{j+1}$ \rref{eq:rdef}
\hfill\break
${\cal S}_j^\bullet$: $j$-th subsection on the space of $\bullet$
\hfill\break
$T$: variable transformation matrix
\hfill\break
$\V[x]$: expectation of $\|x\|_2^2$, \,
$\V_{\bullet}[x]$: eq.~\rref{eq:limv}
\hfill\break
$y(\i) = \phi(\i) \theta$: output of FIR model at $\i$
\hfill\break
$y_o(\i)$: observed output \rref{eq:sys}
\hfill\break
$\theta \in {\cal R}^n$: parameter vector of FIR model 
\hfill\break
$\phi(\i)$: regressor vector eq.~\rref{eq:sys}
\hfill\break
$\reg$: $1$st element of $\tilde{\phi}$
\hfill\break
$\sigma(\reg)$: eq.~\rref{eq:variance101}
\hfill\break
$\bullet_i$, $(\bullet)_i$: $i$-th element of vector $\bullet$
\hfill\break
$\bullet'$: quantized number of $\bullet$
\hfill\break
$\bullet_\j'$: $j$-th quantized number for ${\cal S}_j^\bullet$
\hfill\break
$\tilde{\bullet}$: transformed vector or matrix of $\bullet$ by $T$
\hfill\break
\end{minipage}


\section{Problem Formulation}
\label{sec:formulation}

The objective of this paper is to show the effect of I/O
signal quantizers for parameter estimation error intuitively
understandable form as possible.  
In general, 
the quantization error has a strong correlation with the original signal, 
%
%
therefore, 
analysis 
of the quantization problem in system identification in general model is difficult 
because several types of correlation are used for parameter estimation.  
In order to derive 
analytic and intuitively understandable results for the quantization problem in system
identification,  
we should formulate the problem in feasible forms appropriately.  

From the above observations, 
in this paper, we deal with a system identification problem by
least square criterion for a simple discrete time 
SISO FIR model.  
The plant is: 
\begin{eqnarray}
&&
y_o(\i) = q(y(\i)) + w(\i), \; y(\i) = \phi(\i)\theta, 
\label{eq:sys}
\\
&&
\phi(\i) : =  \left[
\matrix{
u(\i) & u(\i-1) & \cdots & u(\i-n+1)
}
\right], 
\;\;
\theta : =  \left[
\matrix{
\theta_1 &
\theta_2 &
\cdots &
\theta_n
}
\right]^{\rm T},
\nonumber
\\
&&
y_o, \, y, \, w, \, u \in {\cal R}, \, 
\phi \in {\cal R}^{1\times n}, \,
\theta \in {\cal R}^{n\times 1}, \,
\nonumber
\end{eqnarray}
where $w$ is random noise, $q$ is the quantized original analogue output
$y$, $y_o$ is the observed output, $\phi$ is the regressor vector, $\theta$ is a system parameter, 
$n$ is the dimension of the FIR model, $u$ is the input, and $\i$ is the time index.

We assume that $u$ and $w$ are independent.  
The input $u$ and the associated regressor vector $\phi$ are a realization of
a stochastic process with a joint density function 
$f({\phi}_1, \phi_2, \dots , {\phi}_n)$ of $\phi_1$, $\phi_2$, $\dots$ , $\phi_n$,
where $\phi_i$ denotes the $i$-th element of $\phi$. 
The class of $f({\phi}_1, \phi_2, \dots , {\phi}_n)$ considered in this
paper is described below.  

\begin{note}\label{note:noise}
\rm
We also consider noise to be 
\begin{equation}
y_o(\i) = q(y(\i)+w(\i))
\label{eq:sysnoise}
\end{equation}
in \cite{Tsumura:METR05-04} (the long version of this paper).  
The result suggests that the noise when \rref{eq:sysnoise}
increases the effect of quantization on the magnitude of the parameter
estimation error by approximately twice that of \rref{eq:sys}.   
From that result, it is enough to analyze the form of \rref{eq:sys} in
order to know the essential property of the optimal quantization.  
To avoid complicated notation and focus on the quantization effect for 
system identification, we treat the plant \rref{eq:sys} in this paper. 
\hfill{$\diamondsuit$}
\end{note}

The quantizer $q$ is a memoryless symmetric type defined by:
\begin{eqnarray}
&&
q(y): =  
\qy_{\j} \;\mbox{when}\; y\in \S_j
\label{eq:quant10}
\\
&&
\S_0 : =  \left\{
y = 0
\right\},
\;
\S_j : =  \left\{
y : d_{j-1} < y \leq d_j
\right\}, \; j > 0,
\;\;
\S_j : =  \left\{
y : d_{j} \leq y < d_{j+1}
\right\}, \; j < 0
\label{eq:part}
\\
&&
d_0 = 0 < d_{1} < d_{2} \cdots 
, \;\; 
d_{-1} = - d_1, \; d_{-2} = - d_2, \; \dots \; , 
\label{eq:dd}
\end{eqnarray}
where $
\qy_\j$ is the assigned quantized value to the
subsection $\S_j$.
The quantizer $q$ is symmetrical with respect to the origin, and hereinafter we
may omit references on the negative subsections $\S_{-1}$, $\S_{-2}$,
$\dots$ if they are obvious from the
context.
Note that a form $\S_0=\{y : -d_1 \leq y \leq d_1\}$ is also possible for $\S_0$, however
it is clarified not to be optimal in Section~\ref{sec:prev} and without
loss of generality, we consider the form of \rref{eq:part} hereafter.  

Following the standard least squares method, 
we propose the estimated parameter $\hat{\theta}$ 
with a sufficient length of I/O data, $\{u(\i)\}$ and $\{y_o(\i)\}$, as:
\begin{equation}
\hat{\theta}
= (U^{\rm T}U)^{-1}U^{\rm T}
Y_o
= (U^{\rm T}U)^{-1}U^{\rm T}\left(
\qY+W
\right)
= (U^{\rm T}U)^{-1}U^{\rm T}\left(
Y+E+W
\right),
\label{eq:thetaorig}
\end{equation}
where
\begin{eqnarray}
&&
U: = 
\left[
\matrix{
\phi(1)^{\rm T} &
\phi(2)^{\rm T} &
\cdots &
\phi(N)^{\rm T}
}
\right]^{\rm T},
\;\;
W: = 
\left[
\matrix{
w(1) &
w(2) &
\cdots &
w(N)
}
\right]^{\rm T},
\nonumber
\\
&&
Y_o: = 
\left[
\matrix{
y_o(1) &
y_o(2) &
\cdots &
y_o(N)
}
\right]^{\rm T}, \;\;
Y: = 
\left[
\matrix{
y(1) &
y(2) &
\cdots &
y(N)
}
\right]^{\rm T},
\nonumber
\\
&&
\qY: = 
\left[
\matrix{
\qy(1) &
\qy(2) &
\cdots &
\qy(N)
}
\right]^{\rm T},
\;\;
\qy(\i) : =  q(y(\i)),
\nonumber
\\
&&
E
: = 
\left[
\matrix{
e(1) &
e(2) &
\cdots &
e(N)
}
\right]^{\rm T}, 
\label{eq:defofU}
\\
&&
e(\i) : =  \qy(\i) - y(\i).
\label{eq:defofe}
\end{eqnarray}
and $N$ is the I/O data length. 
We call $e$ as the quantization error between $\qy$ and $y$.   
The estimated parameter $\hat{\theta}$ can be also written as:
\begin{eqnarray*}
\hat{\theta}
 = 
(U^{\rm T}U)^{-1}U^{\rm T}(U\theta + E + W)
 = 
\theta + \Delta E + \Delta W,
\end{eqnarray*}
\vspace{-10mm}
\begin{eqnarray}
E
: = 
\left[
\matrix{
e(1) &
e(2) &
\cdots &
e(N)
}
\right]^{\rm T}, 
\;\;
\Delta E 
: = 
(U^{\rm T}U)^{-1}U^{\rm T} E,
\;\;
\Delta W
: = 
(U^{\rm T}U)^{-1}U^{\rm T} W.
\label{eq:defofE}
\end{eqnarray}
This shows that the estimation error $\hat{\theta} - \theta$ can be
evaluated from the magnitudes of the {\it quantization error term} $\Delta E$
and the {\it noise error term} $\Delta W$.

In the quantization-free case, i.e. $e=0$,   
\rref{eq:thetaorig} is the standard least squares estimation.  
When $e\not=0$, 
\rref{eq:thetaorig} is still a realistically reasonable estimation 
subject to the minimization of
\begin{equation}
\E[\|\Delta E\|_2^2]
\label{eq:bias000}
\end{equation}
because
\begin{equation*}
\E[\|\hat{\theta} - \theta\|_2^2] = 
\E[\|\Delta E + \Delta W\|_2^2] =
\E[\|\Delta E\|_2^2] + \E[\|\Delta W\|_2^2].
\end{equation*}


The reduction of the noise error term $\Delta W$ is the main theme of
normal system identification and 
has been well investigated.  
On the other hand, 
although the quantization error term $\Delta E$ can be reduced, in general, 
when the resolution of quantizer becomes high, 
there exists a limitation in the reduction because of the constraint of the
resolution of the quantizer and {\it good}
quantizers for reducing $\Delta E$ are expected.   
Here we show an original quantization problem in this paper
which is resolved into feasible ones in Section~\ref{section:main} and \ref{sec:prev}.  
\begin{problem}
\label{problem:original}
Find an optimal quantizer $q(y)$:
\begin{eqnarray}
&&
\mathop{\min}_{q} {\E}[\|\Delta E\|_2^2]
\nonumber
\\
&&
\mbox{\rm s.t. } {\E}[\Delta E] = 0
\end{eqnarray}
under constraint on the quantization resolution.  
\end{problem}
Note that the latter condition is for bias-free of the estimated parameters.  
\begin{note}
\rm
In the field of information theory, 
the quantization problem is also one of the research themes and its objective
is reducing the distortion between the original
signal and the quantized signal subject to constraints on the 
information in the transmitted signals 
\cite{Bennett:BSTJ48,Lloyd:TIT82,Gish:TIT68,Berger:TIT72,Gersho:book}.   
The constraint on the information in signals can be given by the number of
the quantization steps or the mean code length of the associated code.  
The former is called ``fixed-rate quantization'' and the latter
``variable-rate quantization''.  
In contrast, the purpose in system identification should be the
reduction of the estimation error and this is the definitive
difference.  
\hfill{$\diamondsuit$}
\end{note}

In an ordinary probabilistic framework, 
a conventional, and reasonable, method to evaluate the noise error term
$\Delta W$ 
is to show the convergence rate of:
\begin{equation*}
N(U^{\rm T}U)^{-1}
\;
\mathop{\longrightarrow}^{N\rightarrow \infty}
\;
\frac{1}{\sigma_u^2} I,
\;
\frac{1}{N}U^{\rm T}W
\;
\mathop{\longrightarrow}^{N\rightarrow \infty}
\;
O,
\end{equation*}
where $\sigma_u^2$ is the covariance of $u$,  
by using Slutsky's theorem (see Appendix~\ref{appendex:proofs}),  
subject to an assumption of the mutual independence of the input signal $u$ and the noise $w$.
This methodology is also basically applicable to the evaluation of $\Delta E$
in the probabilistic framework.  
However, different from the case of the noise error term, 
$u$ and $e$ are not independent in general 
and the evaluation of $U^{\rm T}E$ is much more complicated.
This means the problem seems to be a vector quantization on 
$U^{\rm T}E$ with a complex multidimensional distribution.  
In general, multidimensional optimal quantization is known to be a difficult
problem for analytical solution except in special cases.  

Our idea to resolve the above difficulty is in showing that the original
problem, i.e., minimizing the cost function on the magnitude of $\Delta E$, 
can be reduced to a feasible problem;
``minimization of a functional of a weighted one-dimensional quantizer,'' 
by following two steps: 
1. finding an equivalent orthogonal quantization on the space of the regressor vector
to the original quantization of the output signals, 
2. reduction of the 
cost
functions to a suitable
form by using one of the base axes in the regressor vector space.  
Step 1 is described in this section and Step 2 is
described in Section~\ref{section:main} and \ref{sec:prev}. 
 
We define subsets $\Phi_j$ of the regressor vector $\phi$
associated with the subsection $\S_j$ by:
\begin{equation*}
\Phi_j : =  \left\{
\phi : y = \phi \theta \in \S_j
\right\}.
\end{equation*}
We also consider the following variable transformation:
\begin{eqnarray}
y & = & \phi\theta = \phi T \cdot T^{-1} \theta
 =  \tilde{\phi}\tilde{\theta},
\; \tilde{\theta}: = T^{-1} \theta  = 
\left[
\matrix{
\tilde{\theta}_1\cr
O
}
\right],
\;
\tilde{\phi}: = 
\phi T  = : 
\left[
\matrix{
\tilde{\phi}_1 &
\tilde{\phi}_2 &
\cdots &
\tilde{\phi}_n
}
\right]
\label{eq:tildephi}
\end{eqnarray}
where $T$ is an orthogonal matrix.   
Note that such $T$ always exists for any $\theta$.  
Then, $\Phi_j$ is represented by:
\begin{equation*}
\Phi_j : =  
\left\{
\matrix{
\left\{
\phi : \reg \tilde{\theta}_1 \in (d_{j-1}, \; d_{j}]
\right\}, & j > 0,
\cr
\left\{
\reg = 0
\right\}, & j = 0,
\cr
\left\{
\phi : \reg \tilde{\theta}_1 \in [d_{j}, \; d_{j+1})
\right\}, & j < 0.
}
\right.
\end{equation*}
We also define subsections on the space $\reg$:
\begin{equation*}
\I_j : =  
\left\{
\matrix{
\left\{
\reg : \reg \tilde{\theta}_1 \in (d_{j-1}, \; d_j]
\right\}, & j > 0,
\cr
\left\{
\reg  = 0
\right\}, & j = 0,
\cr
\left\{
\reg : \reg \tilde{\theta}_1 \in [d_{j}, \; d_{j+1})
\right\}, & j<0.
}
\right.
\end{equation*}
Then, subsections $\S_j$, $\Phi_j$, and $\I_j$ correspond to each other,
and the probability distribution of $y$ depends only on that of
$\reg$.
Therefore, 
the variable $\reg$ and its subsection $\I_j$ are convenient
for analyzing the probability distribution of $y$ and the error $e$.
Fig.~\ref{fig:phi1} and Fig.~\ref{fig:phi2} are
representations of the relationship between $\S_j$, $\Phi_j$, and
$\I_j$ or $y$, $\phi$, and $\reg$.  

\vspace{5mm}

\centerline{
\begin{minipage}[c]{8cm}
\centerline{
\epsfxsize = 200pt \epsfbox{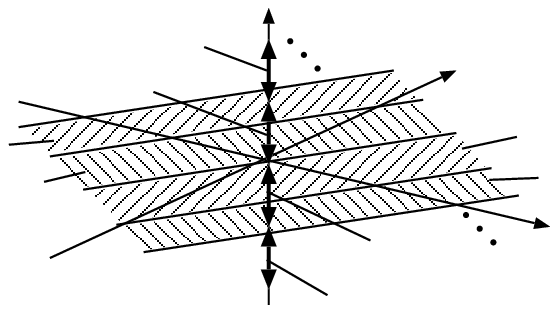}{
\tput(46,97,{${y}$})
\tput(100,28,{$\phi_1$})
\tput(79,84,{$\phi_2$})
\tput(99,45,{$\Phi_2$})
\tput(94,60,{$\Phi_1$})
\tput(-5,56,{$\Phi_{-2}$})
\tput(2,44,{$\Phi_{-1}$})
\tput(32,87,{$\S_2$})
\tput(24,76,{$\S_1$})
\tput(68,22,{$\S_{-2}$})
\tput(60,3,{$\S_{-1}$})
}
}
\refstepcounter{figure}\label{fig:phi1}
{Fig.\,{\thefigure} \hspace{2mm} 
Diagram of the relationship between $\S_j$ and $\Phi_j$ for $n = 2$
}
\end{minipage}
}

\vspace{8mm}

%
\begin{minipage}[c]{8cm}
\centerline{
\epsfxsize = 160pt \epsfbox{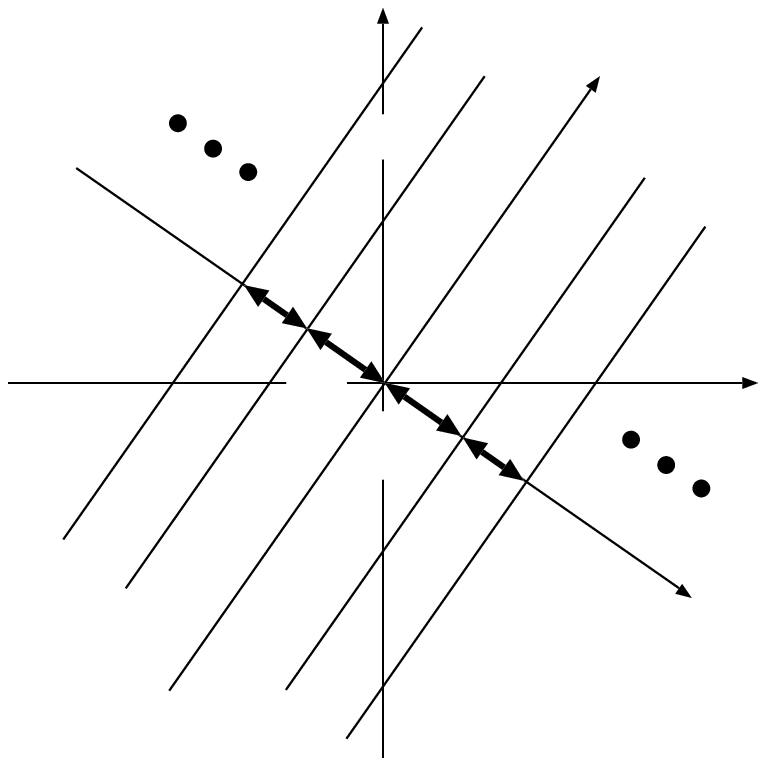}{
\tput(46,81,{$\Phi_{-2}$})
\tput(55,74,{$\Phi_{-1}$})
\tput(67,68,{$\Phi_1$})
\tput(77,62,{$\Phi_2$})
\tput(95,44,{{$\phi_1$}})
\tput(42,95,{{$\phi_2$}})
\tput(82,18,{{$\tilde{\phi}_1$}})
\tput(71,92,{{$\tilde{\phi}_2$}})
\tput(47,39,{{$\I_1$}})
\tput(56,32,{{$\I_2$}})
\tput(36,46,{{$\I_{-1}$}})
\tput(26,53,{{$\I_{-2}$}})
}
}
\refstepcounter{figure}\label{fig:phi2}
{Fig.\,{\thefigure} \hspace{2mm} 
Diagram on the relationship between $\Phi_j$ and $\I_j$ for $n = 2$
}
\end{minipage}
\hfill
\begin{minipage}[c]{8cm}
\centerline{
\epsfxsize = 160pt \epsfbox{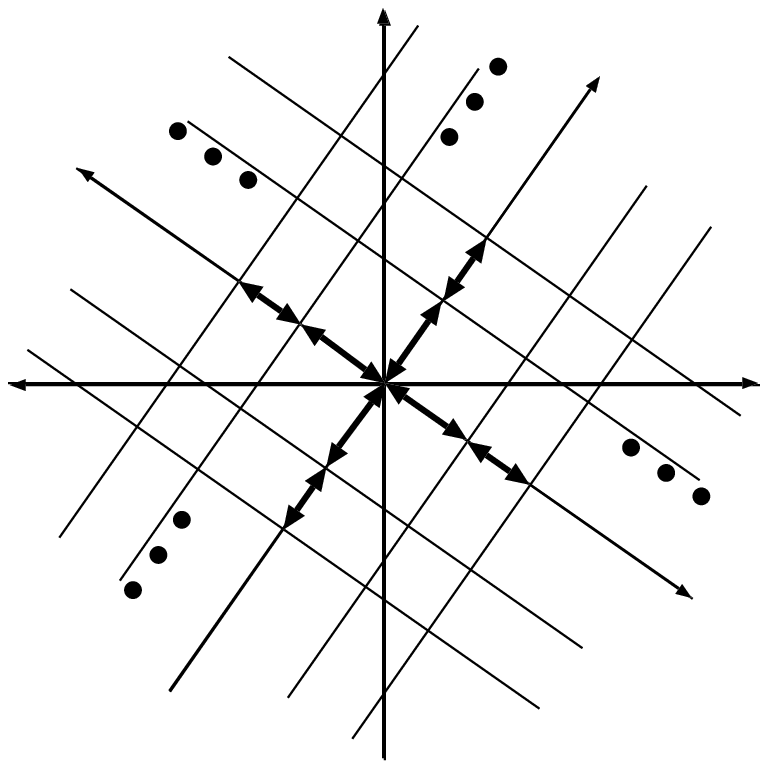}{
\tput(95,44,{{$\phi_1$}})
\tput(42,95,{{$\phi_2$}})
\tput(82,18,{{$\tilde{\phi}_1$}})
\tput(71,92,{{$\tilde{\phi}_2$}})
}
}
\refstepcounter{figure}\label{fig:phi3}
{Fig.\,{\thefigure} \hspace{2mm} 
Quantization on $\phi$ (or $\tilde{\phi}$) for $n = 2$
}
\end{minipage}

\vspace{5mm}

Associated with $T$, the quantization error term $\Delta E$ and $U$ are also transformed to: 
\begin{equation}
\Delta \tilde{E} : =  T^{-1} \Delta E, 
\;
\tilde{U} : =  UT
\label{eq:tildedeltae}
\end{equation}
and $\Delta \tilde{E}$ can be represented as:
\begin{eqnarray}
\Delta \tilde{E}
& = &
T^{-1}({U}^{\rm T}{U})^{-1} {U}^{\rm T}{E}
 = 
(\tilde{U}^{\rm T}\tilde{U})^{-1} \tilde{U}^{\rm T}{E}
\nonumber
\\
& = &
(\tilde{U}^{\rm T}\tilde{U})^{-1}
\left[
\matrix{
\sum_{\i = 1}^N \tilde{\phi}_1(\i)e(\i)
\cr
\sum_{\i = 1}^N \tilde{\phi}_2(\i)e(\i)
\cr
\vdots
\cr
\sum_{\i = 1}^N \tilde{\phi}_{n}(\i)e(\i)
}
\right]
 = 
(\tilde{U}^{\rm T}\tilde{U})^{-1}
\left[
\matrix{
\sum_{\i = 1}^N
\tilde{\phi}_1(\i)
(q(\tilde{\phi}_1(\i)\tilde{\theta}_1)-\tilde{\phi}_1(\i)\tilde{\theta}_1)
\cr
\sum_{\i = 1}^N
\tilde{\phi}_2(\i)
(q(\tilde{\phi}_1(\i)\tilde{\theta}_1)-\tilde{\phi}_1(\i)\tilde{\theta}_1)
\cr
\vdots
\cr
\sum_{\i = 1}^N
\tilde{\phi}_n(\i)
(q(\tilde{\phi}_1(\i)\tilde{\theta}_1)-\tilde{\phi}_1(\i)\tilde{\theta}_1)
}
\right].
\label{eq:deltaetilde}
\end{eqnarray}
Note that 
$\|\Delta\tilde{E}\|_2^2 = \|\Delta E\|_2^2$ because $T$ is an orthogonal matrix.   
%
%
%
%
From the above, it is known that the quantizer can be considered to be an orthogonal and 
symmetric type along each axis $\tilde{\phi}_i$ in the sense that each
axis $\tilde{\phi}_i$ is partitioned in the same rule (see
Fig.~\ref{fig:phi3}). 


In Sections~\ref{section:main} and \ref{sec:prev},
we first derive key lemmas, respectively, to show that the quantity
$\|\Delta {E}\|_2^2=\|\Delta \tilde{E}\|_2^2$ 
can be represented as a functional of the
one-dimensional marginal density function $f(\reg)$ and 
the quantizer on $\reg$, subject to appropriate assumptions. 
%


\section{High Resolution Quantization}
\label{section:main}



In this section, we derive optimal quantizers under considerably 
weak conditions on the probability densities $f(\phi)$ where the
quantizers are assumed to be high resolution.  
At first, we show the following assumption:
\begin{assumption}
\label{assumption:0}
The input $u$ and the density function $f(\phi)$
satisfy the following conditions: 
\begin{enumerate}
\item[1:]
$u(\i)$,
$\i =  \dots , 1, 2, \dots$ are mutually independent.
\item[2:]
$f(\phi)$ is a continuous function s.t.
$f(\tilde{\phi})$ satisfies:
\begin{eqnarray}
&&
f(\tilde{\phi}) = 
\H_0
+ 
\sum_i
\H_{i}(\tilde{\phi}_i-\tilde{\phi}_{i}^\circ)
+
\sum_{i,j} 
\H_{ij}(\tilde{\phi}_i-\tilde{\phi}_{i}^\circ)(\tilde{\phi}_j-\tilde{\phi}_{j}^\circ)
+
O((\tilde{\phi}_i-\tilde{\phi}_{i}^\circ)(\tilde{\phi}_j-\tilde{\phi}_{j}^\circ)(\tilde{\phi}_k-\tilde{\phi}_{k}^\circ)), 
\;\;
|\H_\bullet| < \infty
\nonumber
\\
&&
\label{assumption:high}
\end{eqnarray}
in the neighborhood of an arbitrary 
$\tilde{\phi}^\circ = [\tilde{\phi}_{1}^\circ \; \tilde{\phi}_{2}^\circ \; \cdots \; \tilde{\phi}_{n}^\circ] 
\in \{\tilde{\phi}\}$.  
\end{enumerate}
\end{assumption}
These conditions are not strong in usual setting of system
identification.  
In particular, the essence of \rref{assumption:high} is for guaranteeing the continuity
of $f(\phi)$ 
and it is usually satisfied; 
e.g., \rref{assumption:high} is satisfied when $f(\phi)$ is a
multidimensional normal distribution.  
This technical condition is used in the proof of Lemma~\ref{lemma:seisai}.  

The first Assumption~\ref{assumption:0}.1 gives 
the convergence of $\frac{1}{N}U^{\rm T}U$ or $\frac{1}{N}\tilde{U}^{\rm
T}\tilde{U}$ to $\sigma_u^2 I$, where $\sigma_u^2$ is a covariance of
$u$, at $N \rightarrow \infty$, 
and therefore,
\begin{equation}
N
\|\Delta {E}\|_2^2
\left(=N\|\Delta \tilde{E}\|_2^2\right) 
\mathop{\rightarrow}_{N\rightarrow \infty}
\trace 
\left[
\mathop{\plim}_{N\rightarrow \infty}
\left(
\frac{1}{N^2}U^{\rm T}UU^{\rm T}U
\right)^{-1}
\mathop{\plim}_{N\rightarrow \infty}
\left(
\frac{1}{N}U^{\rm T}EE^{\rm T} U
\right)
\right]
=
\frac{1}{\sigma_u^{4}}
\mathop{\plim}_{N\rightarrow \infty}
\left[
\frac{1}{N}
E^{\rm T}UU^{\rm T}E
\right]
\end{equation}
by Slutsky's theorem (see Appendix~\ref{appendex:proofs}).  
Moreover, we get:
\begin{equation}
\mathop{\plim}_{N\rightarrow \infty}
\left[
\frac{1}{N}
E^{\rm T}UU^{\rm T}E
\right]
=
\frac{1}{N}
\V
\left[
U^{\rm T}E
\right]
%
\left(=
\frac{1}{N}
\V
\left[
\tilde{U}^{\rm T}{E}
\right]
\right),
\end{equation}
therefore, 
\begin{equation}
\|\Delta {E}\|_2^2
\sim
\frac{1}{\sigma_u^{4}N^2}
\V[U^{\rm T}E]
\end{equation}
at enough large $N$.  
%
Then, 
it is reasonable to find an optimal quantizer that: 
\begin{itemize}
\item[1)] 
minimizes 
$\V
\left[
U^{\rm T}E
\right]
\left(
= \V
\left[
\tilde{U}^{\rm T}E
\right]
\right)
$
\item[2)]
subject to constraints on the resolution of the quantizer, 
free of bias from the quantization error term,
 such as:
$\E
\left[
U^{\rm T}E
\right] = 0
$
$
\left(
\mbox{equivalently }\;
\E
\left[
\tilde{U}^{\rm T}E
\right] = 0
\right)
$.
\end{itemize}
%
The minimization of 
$\V
\left[
U^{\rm T}E
\right]
$
in arbitrary resolution cases of the quantizer 
is too complex to expect 
meaningful results, 
however, it is possible to derive the analytic solution 
%
%
in {\bf high resolution} as shown in the following of this section.  
%

\begin{note}
\label{note:multi}
\rm

The multidimensional optimal quantization problem has been investigated 
(e.g., see \cite{Gray:TIT98,Graf:book,Scharf:book,Gersho:book}) and 
the research focus is on the derivation of analytic solutions. 
In the general resolution case, it is known to be a difficult problem and 
limited cases have been solved.  
One of these is the case of one-dimensional quantization and another is the
asymptotic case when the resolution of quantizers is sufficiently high.  
Note that cost functions are $\E[\|X - q(X)\|^r]$ in these studies.
However, we consider the cost function $\E[\|U^{\rm T}E\|_2^2]$ 
in this paper, which originates in system identification parameter estimation.  
The evaluation of the latter is much more complicated because it
contains many correlations of variables and resolving this difficulty is
one of main themes of this paper
(Note that the latter is not simple weighted square-error distortion
because of the correlation between $\reg$ and $e=\reg \th1 - q(\reg\th1)$).
The key lemmas (Lemma~\ref{lemma:seisai} and \ref{lemma:vari0}) show
that this quantity can be represented as a functional of one-dimensional
functions with one-dimensional quantization rules under appropriate
assumptions and, by using them, we
can find the optimal quantizers. 
\hfill{$\diamondsuit$}
\end{note}

On the above minimization problem, 
the bias-free condition 
$\E
\left[
U^{\rm T}E
\right] = 0
$
is equivalent to 
$\E
\left[
\tilde{U}^{\rm T}E
\right] = 0
$
from the relation $\tilde{U}^{\rm T}E = T^{\rm T}U^{\rm T}E$, 
where $T$ is nonsingular and orthogonal.  
From \rref{eq:deltaetilde}, this condition is equivalent to 
\begin{equation}
\E
\left[
\sum_{\i = 1}^N
\tilde{\phi}_{k}(\i)
{e}(\i)
\right]
 = 
N
\cdot
\E
\left[
\tilde{\phi}_{k}
\cdot
{e}(\reg)
\right]
 = 
N
\int
\tilde{\phi}_{k}
{e}(\reg)
f(\tilde{\phi}_1, \tilde{\phi}_{k})
d\tilde{\phi}_1
d\tilde{\phi}_{k}
 = 0
\label{eq:biaszero}
\end{equation}
for $k  = 2, 3, \dots , n$
and 
\begin{equation}
\E
\left[
\sum_{\i = 1}^N
\tilde{\phi}_{1}(\i)
{e}(\i)
\right]
 = 
N
\cdot
\E
\left[
\tilde{\phi}_{1}
\cdot
{e}(\reg)
\right]
 = 
N
\int
\tilde{\phi}_{1}
{e}(\reg)
f(\tilde{\phi}_1)
d\tilde{\phi}_1
 = 0
\label{eq:bias2}
\end{equation}
for $k = 1$. 
Note that we use the notation $e(\reg(t))$ when 
we intend to specify that $e(t)$ is a function of $\tilde{\phi}_1(t)$,
which can be seen from \rref{eq:deltaetilde}. 
The notation $f(\reg)$ represents a marginal density function:
\begin{equation}
f(\reg) : =  \int f(
\reg,
\tilde{\phi}_2,
\dots ,
\tilde{\phi}_n
) 
d\tilde{\phi}_2
\cdots
d\tilde{\phi}_n.  
\label{eq:marginal}
\end{equation}
The notations $f(\tilde{\phi}_i, \tilde{\phi}_j)$, 
$f(\tilde{\phi}_i, \tilde{\phi}_j, \tilde{\phi}_k)$, $\dots$ are similarly
defined.

With the continuity condition of $f(\phi)$ in  
Assumptions~\ref{assumption:0}.2,
\rref{eq:biaszero} and \rref{eq:bias2}, i.e., the bias-free condition $\E[U^{\rm T}E] = 0$
$\left(\E[\tilde{U}^{\rm T}E] = 0\right)$, 
are asymptotically satisfied as the widths of the quantization steps
tend to 0 with the setting of ${y}_{\langle{j}\rangle}'$ at the center
of the quantization subsections.   
On the other hand, for the cost function
$\V[U^{\rm T}E] \left(= \V[\tilde{U}^{\rm T}E]\right)$, which can be
represented by 
\begin{eqnarray}
\V[U^{\rm T}E]
\left(
= \V[\tilde{U}^{\rm T}E]
\right)
& = &
\sum_{k = 1}^{n}
\E
\left[
\left(
\sum_{\i = 1}^N
\tilde{\phi}_{k}(\i)
{e}(\i)
\right)^2
\right]
 = 
\sum_{k = 1}^n
\E
\left[
\left(
\sum_{\i = 1}^N
\tilde{\phi}_{k}(\i)
{e}(\reg(\i))
\right)^2
\right],
\label{eq:star}
\end{eqnarray}
we derive the following key lemma.  
\begin{lemma}
\label{lemma:seisai}
Assume that $f(\tilde{\phi})$ satisfies 
\rref{assumption:high} in Assumption~\ref{assumption:0}.2. 
Then, 
\begin{equation}
\E\left[
\left(
\sum_{\i = 1}^N 
\tilde{\phi}_k(\i)e(\reg(\i))
\right)^2
\right]
\;
\mathop{\longrightarrow}_{\Delta y_{\rm max} \rightarrow 0}
\;
N
\E\left[
\tilde{\phi}_k^2e^2(\reg)
\right],
\label{eq:conv10}
\end{equation}
where $\Delta y_{\rm max}$ is the maximum width of the subsections $\S_j$
of the quantizer defined 
by $\Delta y_{\rm max}: =  \max_j |d_{j+1}-d_j|$.   
\end{lemma}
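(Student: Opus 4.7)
The plan is to expand the squared sum into diagonal and off-diagonal pieces and to show that only the diagonal survives the high-resolution limit. Setting $X_t := \tilde{\phi}_k(t)\, e(\reg(t))$, I would write
$$\E\!\left[\left(\sum_{t=1}^N X_t\right)^{\!2}\right] = \sum_{t=1}^N \E[X_t^2] \;+\; \sum_{t\neq s} \E[X_t X_s].$$
Under Assumption~\ref{assumption:0}.1 the input is i.i.d., so $\{(\tilde{\phi}(t), e(\reg(t)))\}$ is strictly stationary and each $\E[X_t^2] = \E[\tilde{\phi}_k^2 e^2(\reg)]$, making the diagonal sum equal to the target right-hand side $N\E[\tilde{\phi}_k^2 e^2(\reg)]$ exactly. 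Since $|e|\leq \Delta y_{\rm max}$, the diagonal is of order $N(\Delta y_{\rm max})^2$, so it suffices to show the off-diagonal sum is of smaller order as $\Delta y_{\rm max}\to 0$ with $N$ fixed.

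Next I would split the off-diagonal sum by whether $|t-s|\geq n$ or $|t-s|<n$. When $|t-s|\geq n$, the vectors $\phi(t)$ and $\phi(s)$ share no $u$-entry and are independent, so $\E[X_tX_s] = (\E[\tilde{\phi}_k\,e(\reg)])^2$. Writing $\E[\tilde{\phi}_k\,e(\reg)] = \sum_j \int_{\I_j} g_j(\reg)\,e(\reg)\,d\reg$ with $g_j(\reg) := \int \tilde{\phi}_k f(\reg,\tilde{\phi}_k)\,d\tilde{\phi}_k$, I would Taylor-expand $g_j$ around the cell center $\reg_j^c$ using \rref{assumption:high}. The constant term vanishes because $e$ is odd about $\reg_j^c$ once $\qy_\j$ is placed at the midpoint of $\S_j$, and the linear term contributes $O((\Delta y_{\rm max})^3)$ per cell; summing over the $O(1/\Delta y_{\rm max})$ active cells yields $\E[\tilde{\phi}_k\,e(\reg)] = O((\Delta y_{\rm max})^2)$, so each such cross term is $O((\Delta y_{\rm max})^4)$. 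The $O(N^2)$ such pairs therefore contribute $O(N^2(\Delta y_{\rm max})^4) = o(N(\Delta y_{\rm max})^2)$.

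For $|t-s|<n$ the two regressors share some $u$-entries, so I would use the joint density $f(\reg(t),\tilde{\phi}_k(t),\reg(s),\tilde{\phi}_k(s))$ and split by product cells $\I_j\times\I_i$. Setting $g_{j,i}(\reg(t),\reg(s)) := \int \tilde{\phi}_k(t)\tilde{\phi}_k(s)\,f\,d\tilde{\phi}_k(t)\,d\tilde{\phi}_k(s)$ and Taylor-expanding $g_{j,i}$ around the pair-cell center by \rref{assumption:high}, the constant term and each pure linear term vanish by the single-variable odd-about-center property of $e$; only the mixed second-order term survives, contributing $O((\Delta y_{\rm max})^6)$ per pair-cell and hence $O((\Delta y_{\rm max})^4)$ per pair after summing over the $O(1/(\Delta y_{\rm max})^2)$ active pair-cells. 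With only $O(nN)$ overlapping pairs (and $n$ fixed), the aggregate contribution is again $o(N(\Delta y_{\rm max})^2)$, completing the argument.

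The main obstacle is the overlap regime $|t-s|<n$, where the odd-about-center cancellations must be carried out in a genuinely multivariate integral whose integrand mixes $e(\reg(t))e(\reg(s))$ with weights $\tilde{\phi}_k(t)\tilde{\phi}_k(s)$ that are correlated through shared input entries. The second-order Taylor form with cubic remainder in \rref{assumption:high} is precisely what is needed to confirm that all lower-order terms cancel and that the leading surviving contribution is the mixed second-order term of size $O((\Delta y_{\rm max})^6)$ per pair-cell, which is exactly what drives the off-diagonal sum below the $N(\Delta y_{\rm max})^2$ scale of the diagonal.
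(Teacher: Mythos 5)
Your proof is correct and follows essentially the same route as the paper's: both expand the square, identify the diagonal with $N\,\E[\tilde{\phi}_k^2e^2(\reg)]$, and kill the off-diagonal terms by Taylor-expanding the (joint) density over product quantization cells and exploiting the midpoint cancellation of $e$, which makes each cross term $O(\Delta y_{\rm max}^4)$ against the $O(\Delta y_{\rm max}^2)$ diagonal. The only difference is organizational --- you split by time lag and factor the independent pairs via $(\E[\tilde{\phi}_k e(\reg)])^2$, whereas the paper enumerates the coincidence patterns of the four scalar variables $\tilde{\phi}_a e(\tilde{\phi}_b)\tilde{\phi}_c e(\tilde{\phi}_d)$ --- but the substance is identical.
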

The proof of this lemma is given in Appendix~\ref{appendex:proofs}.  

From this lemma, the cost
function $\V[U^{\rm T}E] \left(= \V[\tilde{U}^{\rm T}E]\right)$ can be approximated by:
\begin{eqnarray}
&&
\V\left[
U^{\rm T}E
\right]
\left(
= \V\left[
\tilde{U}^{\rm T}E
\right]
\right)
\mathop{\longrightarrow}_{\Delta y_{\rm max}\rightarrow 0}
N
\sum_{k = 1}^n 
\E[\tilde{\phi}_k^2 e^2(\reg)]
 = 
N
\sum_{k = 1}^n \int \tilde{\phi}_k^2 e^2(\reg) f(\reg, \tilde{\phi}_2, \dots , \tilde{\phi}_n) 
d\reg d\tilde{\phi}_2 \cdots d\tilde{\phi}_n
\nonumber
\\
&&
 = 
N
\int
\left(
\int \sum_{k = 1}^n \tilde{\phi}_k^2 f(\reg, \tilde{\phi}_2, \dots , \tilde{\phi}_n)
d\tilde{\phi}_2 \cdots d\tilde{\phi}_n
\right)
e^2(\reg) d\reg.
\label{eq:vari1001}
\end{eqnarray} 
in the high resolution case.  
Therefore, the focus of the problem is on the calculation of 
the r.h.s. of \rref{eq:vari1001} for general $f(\phi)$ and its minimization.  
A key concept in solving this problem is the introduction of the following
quantity in the distribution of quantization subsections, 
which is a reasonable concept 
in the high resolution case.
\begin{definition}
The quantity $g(\tilde{\phi}_1)$, which satisfies
\begin{equation}
g(\tilde{\phi}_1) d\tilde{\phi}_1 
 = 
\mbox{\rm number of quantized subsections in } d\reg,
\label{eq:defofg}
\end{equation}
is called the density of the number of quantized subsections. 
\end{definition}
This quantity is the same as that introduced in
\cite{Bennett:BSTJ48,Lloyd:TIT82} 
and from this definition, $g(\reg)^{-1}$ represents the width
of the quantization step at $\reg$. 

 
We also assume a form of smoothness of $f(\phi)$ and $g(\reg)$ in the following. 
\begin{assumption}
\label{assumption:smooth}
The density function $f(\phi)$ and $g(\reg)$ satisfy the following
conditions:
\begin{enumerate}
\item[1:]
$f(\phi)$ is a continuous function s.t.
\begin{eqnarray}
&&
\frac{d(\sigma^2(\reg)f(\reg))}{d\reg} < \infty,
\label{eq:assump42}
\\
&&
\sigma(\tilde{\phi}_1)
: = 
\left(
f(\tilde{\phi}_1)^{-1}
\int
\left(
\sum_{k = 1}^{n}
\tilde{\phi}_k^2
\right)
f(\tilde{\phi}_1, \dots , \tilde{\phi}_n)
d\tilde{\phi}_2
\cdots
d\tilde{\phi}_n
\right)^{\frac{1}{2}},
\label{eq:variance101}
\end{eqnarray}
where 
$f(\tilde{\phi}_1)$ is the marginal density function on the space of
 $\tilde{\phi}_1$ defined by \rref{eq:marginal}.  
\item[2:]
the resolution of quantizer is sufficiently high
and the density $g(\reg)$ satisfies:
\[
\frac{d g(\reg)^{-2}}{d\reg} < \infty.
\]
\end{enumerate}
\end{assumption}

\begin{note}
\rm
The essence of Assumption~\ref{assumption:smooth} is the smoothness of
$f(\tilde{\phi}_1)$ and $g(\tilde{\phi}_1)$ such as they guarantee the
approximation of \rref{eq:vari1001} in the following.  
Assumption~\ref{assumption:smooth}.1 describes a form of the continuity of
$f(\phi)$ or $f(\reg)$ and it is not a strong assumption 
in the usual situation of system identification;
e.g., $f(\phi)$ or $f(\tilde{\phi})$ in $C^1$ is enough and it is
satisfied when they are multidimensional normal distributions.  
Assumption~\ref{assumption:smooth}.2 also describes a form of the
continuity of the quantizer and $g(\tilde{\phi}_1)$ or $g(y) \in C^2$ is enough. 
Such technical conditions come from our intention to make the necessary
conditions for deriving \rref{eq:vari1001app} weak as possible.  
\hfill{$\diamondsuit$}
\end{note}

With Assumption~\ref{assumption:smooth}.2,
we can select a value 
$g_j^{-1} \sim g(\reg)^{-1}$ for the subsection $\I_j$ that satisfies
$g_j^{-1} = |\I_j|$. 
Moreover, 
with $\sigma(\reg)$ of $f(\tilde{\phi})$ at $\reg$ defined in
\rref{eq:variance101}, 
Assumption~\ref{assumption:smooth}.1--2, 
and $\Delta \tilde{\phi}: = \max_j \th1^{-1}|d_{j+1} - d_j|$,
for the objective function \rref{eq:vari1001}, 
we calculate the following directly: 
\begin{eqnarray}
\rref{eq:vari1001}/N
=
\int
\sigma^2(\reg)
e^2(\reg)
f(\reg)
d\reg
 = 
\th1^2
\int
\frac{1}{12}g(\tilde{\phi}_1)^{-2}
\sigma^2(\tilde{\phi}_{1}) f(\tilde{\phi}_1) d\tilde{\phi}_1
+
O(\Delta \tilde{\phi}).
\label{eq:vari1001app}
\end{eqnarray}
See Appendix~\ref{appendex:proofs}
for the derivation of \rref{eq:vari1001app}.
From this, 
\begin{equation}
\th1^2
\int
\frac{1}{12}g(\reg)^{-2}
\sigma^2(\reg) f(\reg) d\reg
\label{eq:objectivef}
\end{equation}
is considered to be a reasonable cost function when
Assumption~\ref{assumption:0}
and \ref{assumption:smooth}
are satisfied.   

In the following 
we assume 
Assumption~\ref{assumption:0} and Assumption~\ref{assumption:smooth} and
give the optimal quantizers, 
which minimize \rref{eq:objectivef}, 
subject to a constraint on
the number of quantization steps (Section~\ref{sec:fixedrate}) or on the
expectation of the code length, where the quantized data is optimally
encoded (Section~\ref{sec:variablerate}). 
The former case is referred to as ``fixed-rate quantization'' 
because it is identical to a ``fixed-code length'' case; 
the latter case is referred to as ``variable-rate quantization'' and
the code length is not fixed.

\subsection{Fixed-rate quantization}\label{sec:fixedrate}

From the previous derivation, 
the original optimization problem of \rref{eq:vari1001} 
can be replaced by the minimization of \rref{eq:objectivef}
in $N \rightarrow \infty$ and the high
resolution case:
\begin{problem}
Find
\begin{eqnarray}
&&
\hspace{-1cm}
g_{\rm f}(\reg)
: =  \arg \min_g \int {\cal F}(g(\reg)
)
d\tilde{\phi}_1 
\label{eq:opt-n}
\\
\mbox{s.t.}&&
\int
g(\reg) d\reg = M,
\label{eq:opt-n-const}
\end{eqnarray}
where
\begin{eqnarray}
{\cal F}(g(\tilde{\phi}_1)
) 
&: = &
\frac{1}{12}\th1^2
{g(\tilde{\phi}_1)}^{-2}
\sigma^2(\tilde{\phi}_1)
f(\tilde{\phi}_1).
\label{eq:defofF}
\end{eqnarray}
\end{problem}

The following theorem gives the solution of this problem:
\begin{theorem}
\label{theorem:fixed-rate}
The solution of \rref{eq:opt-n} is: 
\begin{eqnarray}
g_{\rm f}(\tilde{\phi}_1)
& = & 
K \sigma^{\frac{2}{3}}(\tilde{\phi}_1)
f^{\frac{1}{3}}(\tilde{\phi}_1)
\label{eq:opthigh}
\\
K & = & D^{-1}M\\
D & = & \int \sigma^{\frac{2}{3}}(\reg)f^{\frac{1}{3}}(\reg) d\reg.
\end{eqnarray}
Moreover, the optimized value is given by:
\begin{equation}
\int {\cal F}(g_{\rm f}(\reg)
) d\reg
 =  \frac{1}{12}\th1^2 D^3 M^{-2}.
\label{eq:minv}
\end{equation}
\end{theorem}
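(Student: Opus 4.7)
The plan is to treat this as a constrained variational problem in a single real variable with no derivative of $g$ in the functional, so that the Euler--Lagrange equation reduces to a pointwise stationarity condition. First I would introduce a Lagrange multiplier $\lambda$ for the normalization constraint \rref{eq:opt-n-const} and consider the unconstrained functional
\begin{equation*}
L[g] = \int \Bigl(\mathcal{F}(g(\tilde{\phi}_1)) - \lambda\, g(\tilde{\phi}_1)\Bigr) d\tilde{\phi}_1 + \lambda M.
\end{equation*}
Since the integrand depends only on $g$ (and not on its derivatives), setting the pointwise derivative in $g$ to zero gives
\begin{equation*}
\frac{\partial \mathcal{F}}{\partial g} - \lambda = -\frac{1}{6}\tilde{\theta}_1^2\, g^{-3}\sigma^2(\tilde{\phi}_1) f(\tilde{\phi}_1) - \lambda = 0,
\end{equation*}
which forces $g(\tilde{\phi}_1) \propto \sigma^{2/3}(\tilde{\phi}_1) f^{1/3}(\tilde{\phi}_1)$, yielding the stated form $g_{\rm f} = K \sigma^{2/3} f^{1/3}$.

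Next I would pin down the constant $K$ by inserting this expression into the constraint \rref{eq:opt-n-const}:
\begin{equation*}
M = \int g_{\rm f}(\tilde{\phi}_1)\, d\tilde{\phi}_1 = K \int \sigma^{2/3}(\tilde{\phi}_1) f^{1/3}(\tilde{\phi}_1)\, d\tilde{\phi}_1 = K D,
\end{equation*}
so $K = M/D$. Substituting $g_{\rm f}$ back into $\mathcal{F}$ gives
\begin{equation*}
\int \mathcal{F}(g_{\rm f}) d\tilde{\phi}_1 = \frac{1}{12}\tilde{\theta}_1^2 K^{-2} \int \sigma^{2/3}(\tilde{\phi}_1) f^{1/3}(\tilde{\phi}_1)\, d\tilde{\phi}_1 = \frac{1}{12}\tilde{\theta}_1^2\, K^{-2} D = \frac{1}{12}\tilde{\theta}_1^2\, D^3 M^{-2},
\end{equation*}
which is the claimed minimum value \rref{eq:minv}.

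The main obstacle is verifying that this stationary point is actually a global minimum and not merely a critical point. I would handle that in either of two equivalent ways. The direct way is to observe that $\mathcal{F}(g) = c\, g^{-2}$ (with $c \geq 0$) is strictly convex in $g > 0$, so $L[g]$ is convex and the Euler--Lagrange critical point is automatically the unique minimizer on the admissible class. The slicker way is to invoke Hölder's inequality with exponents $3$ and $3/2$: writing $A(\tilde{\phi}_1) := \tfrac{1}{12}\tilde{\theta}_1^2 \sigma^2 f$,
\begin{equation*}
\int A^{1/3}\, d\tilde{\phi}_1 = \int \bigl(A\, g^{-2}\bigr)^{1/3} g^{2/3}\, d\tilde{\phi}_1 \leq \Bigl(\int A\, g^{-2} d\tilde{\phi}_1\Bigr)^{1/3} \Bigl(\int g\, d\tilde{\phi}_1\Bigr)^{2/3},
\end{equation*}
which, after cubing and using $\int g = M$, gives the matching lower bound $\int A g^{-2} d\tilde{\phi}_1 \geq \tfrac{1}{12}\tilde{\theta}_1^2 D^3/M^2$, with equality precisely when $A g^{-3}$ is constant, i.e. $g \propto \sigma^{2/3} f^{1/3}$. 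Beyond this, only minor technicalities need attention: restricting the integration to the support of $f$ (where $\sigma$ is finite by Assumption~\ref{assumption:smooth}.1) to ensure $D$ is well defined, and noting that admissible $g$ must be positive where $f > 0$ so that $g^{-2}$ is integrable against $A$.
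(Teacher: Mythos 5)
Your derivation is correct and its core is the same variational computation the paper performs, but the two proofs package it differently. The paper absorbs the constraint \rref{eq:opt-n-const} by rewriting the problem in terms of $G(\tilde{\phi}_1)=\int_{-\infty}^{\tilde{\phi}_1} g\,d\tilde{\phi}_1$ (so $g=G'$ and the constraint becomes a boundary condition) and then applies the Euler--Lagrange equation, which yields $\frac{d}{d\tilde{\phi}_1}\bigl(-2g^{-3}\sigma^2 f\bigr)=0$, i.e.\ $g^{-3}\sigma^2 f$ constant; you instead keep $g$ as the variable and impose the constraint with a Lagrange multiplier, arriving at the same pointwise condition. From there both arguments fix $K$ by \rref{eq:opt-n-const} and obtain \rref{eq:minv} by substitution, exactly as in the paper. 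What your write-up adds, and the paper's proof does not, is a verification that the stationary point is the global minimizer: the paper stops at the Euler--Lagrange condition, whereas your H\"older inequality with exponents $3$ and $3/2$ (or the convexity of $g\mapsto g^{-2}$) gives a self-contained lower bound $\int\mathcal{F}(g)\,d\tilde{\phi}_1\geq \frac{1}{12}\th1^2 D^3 M^{-2}$ for every admissible $g$ with $\int g=M$, with equality exactly at $g\propto\sigma^{2/3}f^{1/3}$; this bypasses the calculus of variations entirely and settles optimality, not just stationarity. Your closing technical remarks (restricting to the support of $f$ so that $D$ is finite, positivity of $g$ there) are sensible and consistent with Assumptions~\ref{assumption:0} and \ref{assumption:smooth}, under which the paper implicitly works.
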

The minimization problem can be rigorously solved by applying the
calculus of variations.  
See Appendix~\ref{appendex:proofs} for the proof. 


From Theorem~\ref{theorem:fixed-rate}, the asymptotic optimal
quantization at high resolution is readily calculated analytically, or
numerically, if the marginal density functions $f(\reg)$ are known.  

\begin{note}
\rm
The optimal quantization scheme on $y$ (call it as $g_{\rm f}(y)$) 
is also given by using the above results.  
With the relation $y=\reg\th1$ and the fact that the optimal
$g_{\rm f}(\reg)$ is given only by $f(\reg)$, $g_{\rm f}(y)$ 
on $y$ is a simple scaling of $g_{\rm f}(\reg)$.  
Therefore, $g_{\rm f}(y)$ on $y$ is given by; 
(i) using the knowledge of $\th1$ and $g_{\rm f}(\reg)$, 
or (ii) $f(y)$ on $y$ such as 
$g_{\rm f}(y) = K' \sigma^{\frac{2}{3}}(y)f^{\frac{1}{3}}(y)$, 
where $f(y)$ is obtained by the observation of the output data $\{y(t)\}$.  
The situation (i) is a standard problem setting of control systems
under limitation of channel capacity, where the quantizer (encoder) is
supposed that it can fully utilize information on systems in order to
optimally compress the data.  The situation (ii) is also a natural
problem setting.  
\end{note}

\begin{example}
\label{note:number1}\rm
When $f(\tilde{\phi})$ is a multidimensional normal distribution:
\[
f(\reg, \tilde{\phi}_2, \dots , \tilde{\phi}_n)
 = 
\frac{1}{(2\pi)^{\frac{n}{2}}(\det \Gamma)^{\frac{1}{2}}}
\exp\left(
-\frac{1}{2}
\tilde{\phi}^{\rm T}
\Gamma^{-1}
\tilde{\phi}
\right), 
\;\;
\Gamma = \diag (\sigma_o, \sigma_o, \dots , \sigma_o),
\]
where $\Gamma$ is a covariance matrix of $\tilde{\phi}$, 
then 
\[
\sigma^2(\reg) = \reg^2 + (n-1) \sigma_o^2.
\]
For simplicity, 
in the case that the order $n$ of the FIR model is sufficiently large, 
\begin{equation*}
 \sigma^2(\reg) f(\reg) \sim n\, \sigma_o^2 f(\reg).
\end{equation*}
Therefore:
\begin{eqnarray}
&&
D \sim 
n^{\frac{1}{3}}\sigma_o^{\frac{2}{3}}
\int f^{\frac{1}{3}}(\reg) d\reg,
\;\;
g_{\rm f}(\reg) \sim M \left(\int f^{\frac{1}{3}}(\reg) d\reg\right)^{-1} f^{\frac{1}{3}}(\reg),
\nonumber
\\
&&
\int {\cal F}(g_{\rm f}(\reg)
) d\reg
\sim
\frac{1}{12}\th1^2 
\left(\int f^{\frac{1}{3}}(\reg) d\reg\right)^{3}
n \sigma_o^2 
M^{-2}
 = 
\frac{1}{12}\th1^2 
6\sqrt{3} \pi
n \sigma_o^4 
M^{-2}
\sim
0.8658
\pi
\th1^2 
n \sigma_o^4 
M^{-2}.
\label{eq:gosa100}
\end{eqnarray}
\hfill{$\diamondsuit$}
\end{example}
\begin{example}
\label{note:origin}
\rm
Here we consider another simple case $n=1$, where the 
cost function becomes
\begin{eqnarray*}
\V\left[
U^{\rm T}E
\right]
 = 
N
\int
\reg^2
e^2(\reg)
f(\reg)
d\reg.
\end{eqnarray*}
Then, the optimal quantization $\goptf^\star(\reg)$ 
for this is given by 
\begin{eqnarray*}
g_{\rm f}^\star(\reg)
 = 
K \reg^{\frac{2}{3}}
f^{\frac{1}{3}}(\reg)
, \;\;
K
 = 
D^{-1}M
, \;\;
D
 = 
\int
\reg^{\frac{2}{3}}f^{\frac{1}{3}}(\reg) d\reg.
\end{eqnarray*}
\hfill{$\diamondsuit$}
\end{example}


We illustrate $\goptf^\star(\reg)$ for the cases where 
$\sigma^2(\reg) = \reg^2 + \sigma_o^2$ and 
$f(\reg)$ is the uniform distribution,
normal distribution, or power law as follows.  

%
%

Fig.~\ref{fig:uniform} is the case that $f(\reg)$ is the uniform distribution.  
From the figure, we observe 
that the optimal quantization is coarse near the origin of 
$\reg$ and dense near the boundary of the domain of $\reg$. 
Theorem~\ref{theorem:fixed-rate} shows that the increasing rate of resolution with enough
large $\reg$ is about $\tilde{\phi}_1^{\frac{2}{3}}$.


When $f(\reg)$ is the normal distribution, 
the profile of the density $f(\reg)$ near the origin is flat; therefore,     
the optimal quantizer must have a similar profile to that where
$\reg$ is the uniform distribution near the origin.  
We can see such a profile of $\goptf^\star(\reg)$ in Fig.~\ref{fig:normal}.  
This property is, in some sense, the dual result to that of the quantization
problem for stabilization by \cite{Elia:IEEE01}; that is, the coarsest
quantization scheme for stabilization is dense near the origin and
becomes coarser as distance from the origin increases.
These observations suggest that there appears to exist a trade-off between
parameter estimation and stabilization in the quantization scheme for
a type of adaptive control system.
On the other hand, in the area of the tail of $f(\reg)$, $\goptf^\star(\reg)$
decreases.  However, contrary to our intuition, the resolution remains high,
e.g.,
$\goptf^\star(3)$ $\sim$ $0.208$ $\sim$ $45\%$ of $\max \goptf^\star(\reg)$ 
or 
$\goptf^\star(4)$ $\sim$ $0.0774$ $\sim$ $17\%$ of $\max \goptf^\star(\reg)$, 
where $f(\reg)$ is sufficiently small.    

Finally $f(\reg)\sim \reg^{-2}$ at the tail of the
distribution is an example of a power law. 
In this case, $\goptf^\star$ is constant in the tail and it is marginal for 
the solution's existence (see Fig.~\ref{fig:power}). 
This result shows the difficulty of system identification at
sufficient accuracy by using finite information from the system when the tail
of the probability density function $f(\reg)$ is heavier than $O(\reg^{-2})$.  
That is, this 
explains the complexity of the power law from the viewpoint of
parameter estimation in system identification.

\noindent
\hfill
\begin{minipage}[c]{\minipagewidth}
\vspace{.5cm}
\centerline{
\epsfxsize = \figwidth \epsfbox{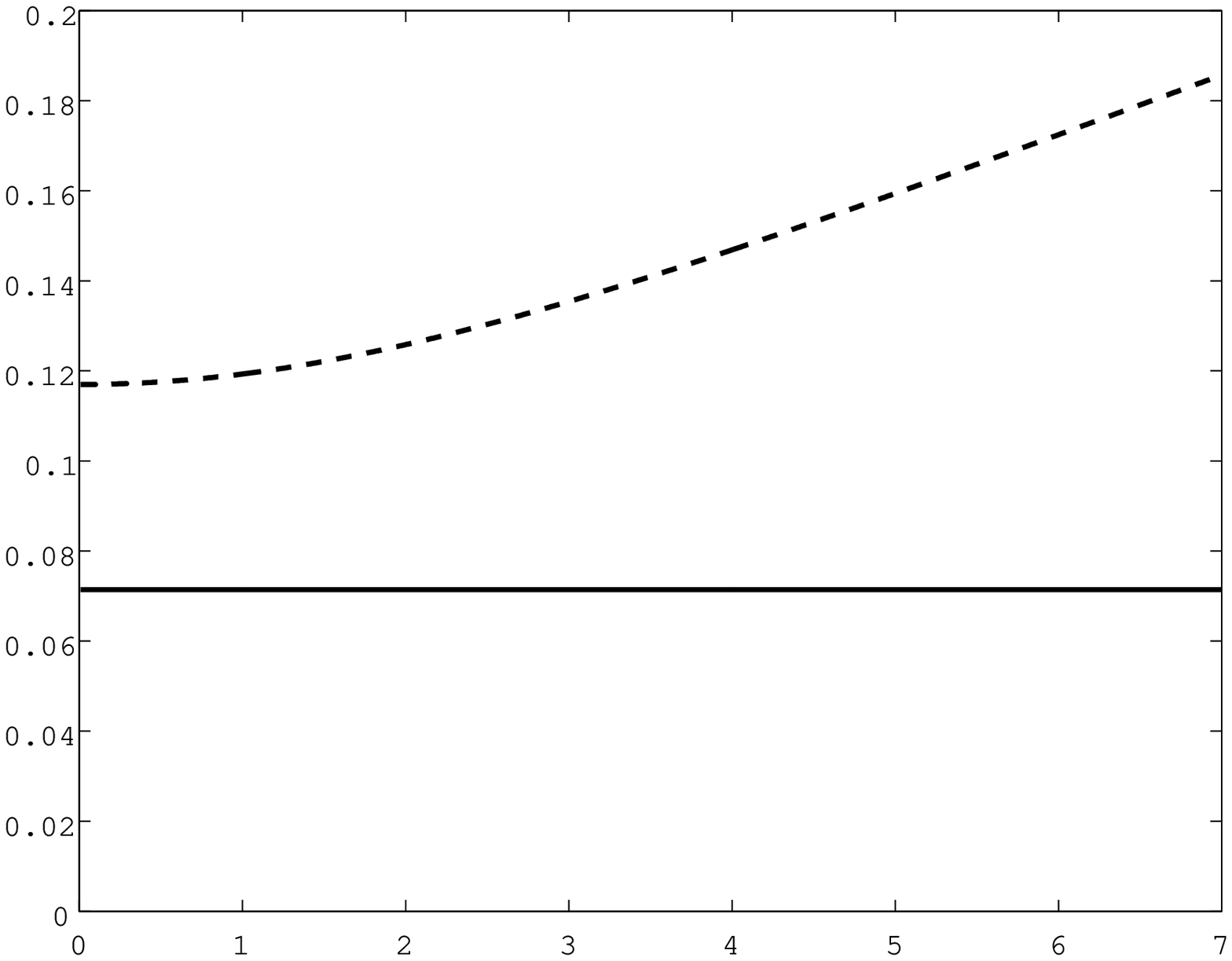}{
\tput(50,-10,{$\reg$})
\tput(55,28,{{$f(\reg)$}})
\tput(40,76,{{$\goptf^\star(\reg)$}})
}
\refstepcounter{figure}\label{fig:uniform}
}
\vspace{.6cm}
{Fig.~\ref{fig:uniform}: 
Probability density
 $f(\reg)$ of the regressor (solid line) 
in uniform distribution and 
the density function of the number of the optimally quantized subsections
$\goptf^\star(\reg)$ (dashed line) when $\sigma^2(\reg) = \reg^2 + \sigma_o^2$
}
\vspace{.3cm}
\end{minipage}
\hfill
\begin{minipage}[c]{\minipagewidth}
\vspace{.5cm}
\centerline{
\epsfxsize = \figwidth \epsfbox{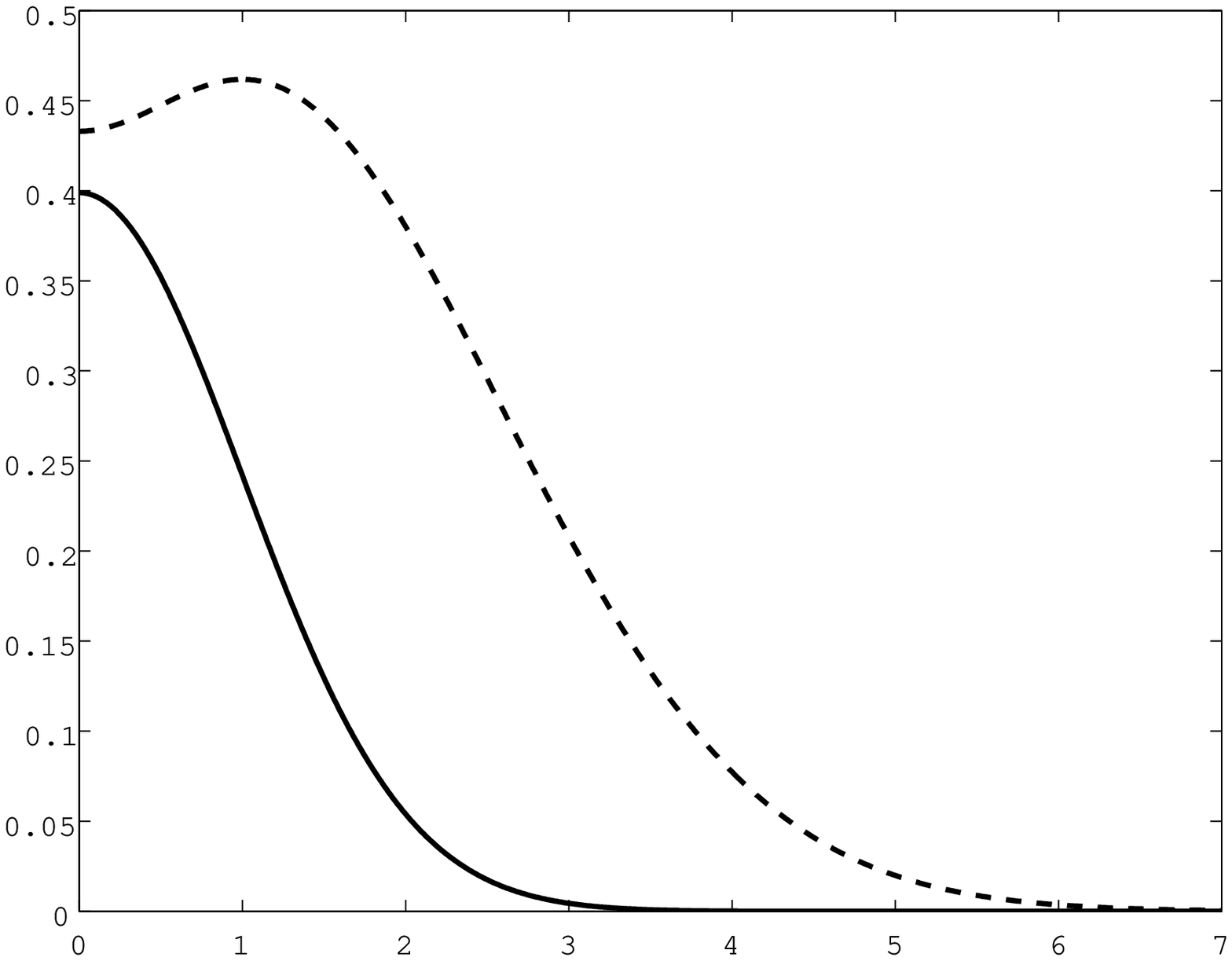}{
\tput(50,-10,{$\reg$})
\tput(29,30,{{$f(\reg)$}})
\tput(52,37,{{$\goptf^\star(\reg)$}})
}
\refstepcounter{figure}\label{fig:normal}
}
\vspace{.6cm}
{Fig.~\ref{fig:normal}: 
Probability density $f(\reg)$ of the regressor (solid line) 
in normal distribution and 
the density function of the number of the optimally quantized subsections
$\goptf^\star(\reg)$ (dashed line) when $\sigma^2(\reg) = \reg^2 + \sigma_o^2$
}
\vspace{.3cm}
\end{minipage}
\hfill\break

\noindent
\hfill
\begin{minipage}[c]{\minipagewidth}
\vspace{.5cm}
\centerline{
\epsfxsize = \figwidth \epsfbox{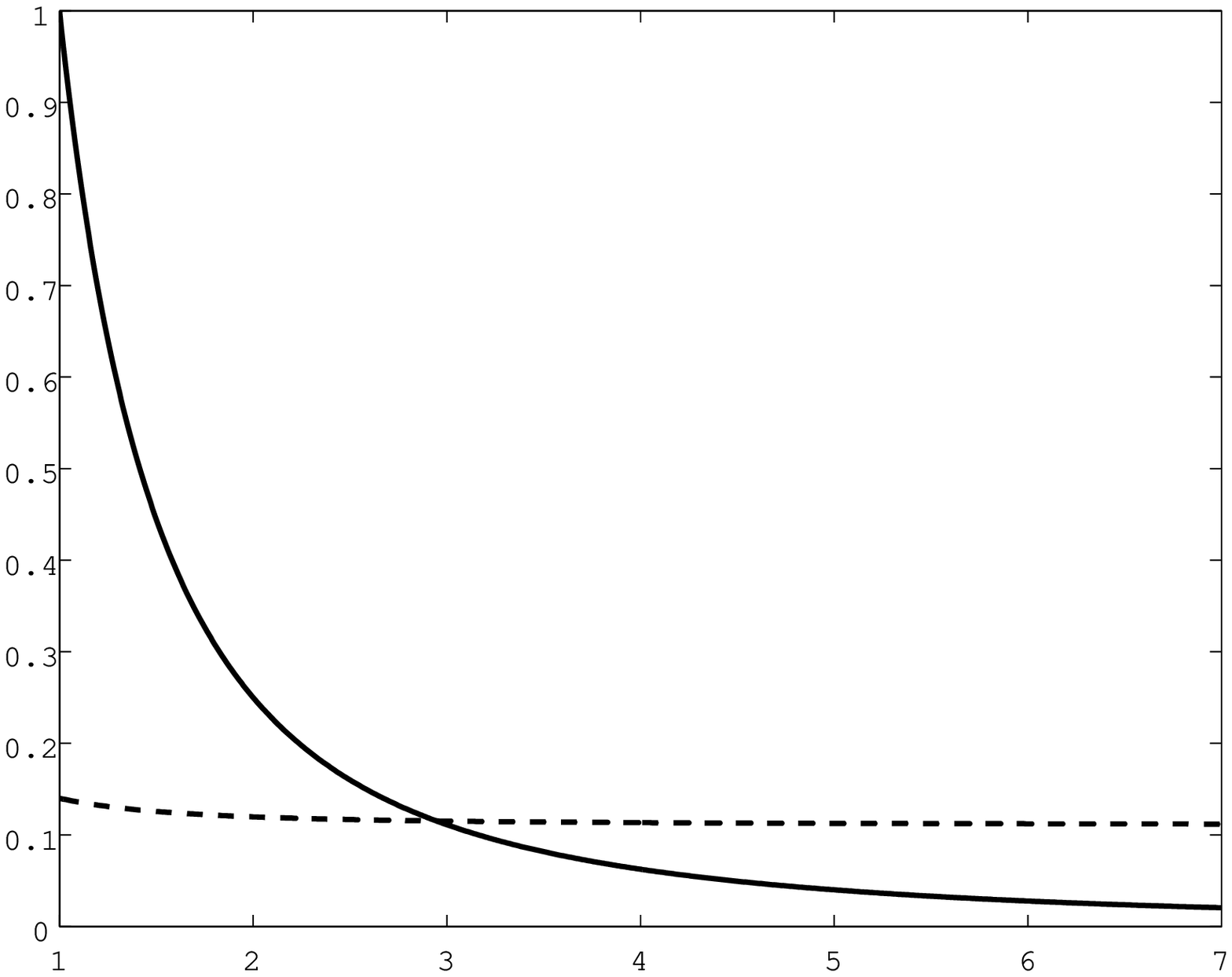}{
\tput(50,-10,{$\reg$})
\tput(20,35,{{$f(\reg)$}})
\tput(62,22,{{$\goptf^\star(\reg)$}})
}
\refstepcounter{figure}\label{fig:power}
}
\vspace{.8cm}
{Fig.~\ref{fig:power}: Power law ($O(\tilde{\phi}_1^{-2}$)) 
$f(\reg)$ of the regressor (solid line) and
the density function of the number of the optimally quantized subsections
$\goptf^\star(\reg)$ (dashed line) when $\sigma^2(\reg) = \reg^2 + \sigma_o^2$
}
\vspace{.3cm}
\end{minipage}
\hfill\break

\begin{note}
\label{note:reason}
\rm
As known from Fig.~\ref{fig:uniform} and Fig.~\ref{fig:normal}, 
when $f(\tilde{\phi})$ is the
normal distribution, uniform distribution or other probable
distributions in usual situation of system identification,  
the marginal density $f(\reg)$ is approximately flat near the origin
and the quantization becomes coarse in such subsection.  
Therefore, in order to clarify the minute structure of the optimal
quantizer around the origin, 
we should consider the problem in the coarse resolution with a flat
marginal density $f(\reg)$.  
Such case is rigorously analyzed in Section~\ref{sec:prev}.  
%
\hfill{$\diamondsuit$}
\end{note}

\subsection{Variable-rate quantization}\label{sec:variablerate}

The previous subsection presents the optimal quantizer to minimize
the identification error 
\rref{eq:vari1001} (i.e. \rref{eq:objectivef})
subject to a constraint on the number of quantization
steps, i.e., fixed-rate quantization, with high resolution.  
On the other hand, 
to reduce the
information in the observed data, it is 
reasonable to apply variable-rate coding for the quantized signals and evaluate the
mean code length from the information theoretic viewpoint.  
From this observation, 
we consider the minimization problem of
\rref{eq:vari1001} (i.e., \rref{eq:objectivef})
subject to a constraint of the expectation of the optimal
code length in this subsection, that is, variable-rate quantization, with high resolution.  

Let $C(\cdot)$ be an encoder that is a mapping from source alphabets to
code alphabets and $l(\cdot)$ be the code length.  
We regard the quantized output $q(\reg)$ as the corresponding source alphabets, then, 
$l(C(q(\reg)))$ represents the code length of $q(\reg)$. 
The expectation of the optimal variable-rate code length for a quantized
signal is related to the entropy of the source alphabets by the
following well-known source coding theorem.  
\begin{proposition}{\rm \cite{Shannon:bell48,Cover:book}}
Let $x$ be source alphabets, then: 
\begin{equation*}
\E\left[l(C(x))\right] \geq H(x),
\end{equation*}
where $H(x)$ represents the entropy of $x$. 
\end{proposition}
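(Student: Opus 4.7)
The plan is to prove the source coding theorem via Kraft's inequality and the non-negativity of Kullback--Leibler divergence, which is the standard route. I will assume the code $C(\cdot)$ is uniquely decodable (or, without loss of generality, a prefix code, since any uniquely decodable code satisfies the same Kraft bound on lengths).

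First, I would apply Kraft's inequality: for any uniquely decodable code over a $D$-ary alphabet, with $l_i := l(C(x_i))$ the code length assigned to source symbol $x_i$, we have $\sum_i D^{-l_i} \leq 1$. Let $p_i := \Pr[x = x_i]$. Then the difference between the expected code length and the entropy can be rewritten as
\begin{equation*}
\E[l(C(x))] - H(x) = \sum_i p_i l_i + \sum_i p_i \log_D p_i = \sum_i p_i \log_D \frac{p_i}{D^{-l_i}}.
\end{equation*}

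Next, I would introduce the normalized probabilities $q_i := D^{-l_i}/S$ with $S := \sum_j D^{-l_j} \leq 1$, and rewrite the right-hand side as
\begin{equation*}
\sum_i p_i \log_D \frac{p_i}{q_i} - \log_D S = D_{\mathrm{KL}}(p \,\|\, q) - \log_D S.
\end{equation*}
By Gibbs' inequality (i.e., non-negativity of KL divergence, which follows from the elementary inequality $\ln t \leq t-1$), the first term is non-negative. By Kraft's inequality, $S \leq 1$, so $-\log_D S \geq 0$. Summing the two non-negative contributions yields $\E[l(C(x))] \geq H(x)$, as required.

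The main conceptual obstacle, rather than any technical calculation, is simply the implicit regularity assumption on $C$: Kraft's inequality requires unique decodability (McMillan's theorem extends it from prefix codes to uniquely decodable codes), and without such an assumption the statement is false (one could assign length zero to every symbol). In the present paper's setting this is harmless because the expectation of an \emph{optimal} variable-rate code is taken over the class of uniquely decodable codes, for which the Kraft bound applies; I would state this assumption explicitly at the start of the proof to make the argument airtight.
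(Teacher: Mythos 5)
Your argument is correct and is essentially the standard proof (Kraft--McMillan inequality plus non-negativity of the Kullback--Leibler divergence) found in the cited reference \cite{Cover:book}; the paper itself does not reprove this proposition but simply quotes it from \cite{Shannon:bell48,Cover:book}, so your route coincides with the one the paper relies on. Your explicit remark that unique decodability is needed for the Kraft bound is a worthwhile clarification, since the paper leaves that hypothesis implicit in the phrase ``optimal variable-rate code.''
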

With this proposition, the optimization problem of the quantizer
for the code length is reduced to the minimization problem 
of \rref{eq:vari1001} (i.e., \rref{eq:objectivef})
subject to a constraint on the entropy of the quantized signals.  

The basic concept for representing the quantizer with high resolution
is the same as that of the previous subsection. 
That is, subject to 
Assumption~\ref{assumption:smooth}.1 and \ref{assumption:smooth}.2,
we obtain the asymptotic approximation of the entropy of
the quantized signal: 
\begin{eqnarray}
\sum_j - p_j \log p_j 
& \sim & 
\sum_j - \int_{\I_j} f(\reg) d\reg \log f_jg_j^{-1} 
\sim
\int - f(\reg) \log \left(f(\reg) g^{-1}(\reg) \right)
d\tilde{\phi}_1 
\nonumber
\\
& = &
H_{\rm d}(f) + \int - f(\reg) \log \left(g^{-1}(\reg) \right)
d\reg
=: H(f, g),
\label{eq:entropy}
\end{eqnarray}
where $H_{\rm d}(f): = \int -f(\reg) \log f(\reg) d\reg$.   
By using this asymptotic approximation of the entropy \rref{eq:entropy}, 
we consider the following problem.
\begin{problem}
Find
\begin{eqnarray}
\lefteqn{
g_{\rm v}(\tilde{\phi}_1)
: = 
\arg \min_g \int {\cal F}(g(\tilde{\phi}_1)
)
d\tilde{\phi}_1 
}
\label{eq:opt-e}
\\
&&
\mbox{s.t. }
H(f,g) = \log M,
\hspace{3cm}
\label{eq:opt-e-const}
\end{eqnarray}
where ${\cal F}(\cdot)$ is defined in \rref{eq:defofF}. 
\end{problem}
Note that $M$ is the expected number of quantization steps in the sense
of \rref{eq:opt-e-const}. 
We can derive the following theorem:
\begin{theorem}
\label{theorem:variable}
The solution of \rref{eq:opt-e} is:
\begin{eqnarray}
g_{\rm v}(\tilde{\phi}_1)
& = & 
K M \sigma(\reg)
\label{eq:gentroopt}
\\
K & = & \exp L\\
L&: = & -H_{\rm d}(f)-\int f\log \sigma(\reg) d\reg 
 = 
\int f(\tilde{\phi}_1) \log\frac{f(\tilde{\phi}_1)}{\sigma(\reg)} d\tilde{\phi}_1. 
\end{eqnarray}
Moreover, the optimized value is: 
\begin{equation}
\int {\cal F}(g_{\rm v}(\tilde{\phi}_1)
) d\tilde{\phi}_1
 =  \frac{1}{12}\th1^2
K^{-2} M^{-2}.
\label{eq:mine}
\end{equation}
\end{theorem}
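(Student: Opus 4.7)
The plan is to mirror the variational argument from Theorem~\ref{theorem:fixed-rate}, but with the entropy-like constraint \rref{eq:opt-e-const} in place of the number-of-steps constraint. Since neither ${\cal F}(g(\tilde\phi_1))$ nor the integrand of $H(f,g)$ involves a derivative of $g$, the Euler--Lagrange equation degenerates into a pointwise stationarity condition and no boundary terms arise; this is where the bulk of the calculation lies.

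First I would form the Lagrangian
\[
J[g] \;=\; \int {\cal F}(g(\tilde\phi_1))\,d\tilde\phi_1 \;+\; \lambda\!\left(H_d(f) + \int f(\tilde\phi_1)\log g(\tilde\phi_1)\,d\tilde\phi_1 - \log M\right),
\]
using that $H(f,g) = H_d(f) + \int f\log g\,d\tilde\phi_1$ from \rref{eq:entropy}. Differentiating the integrand with respect to $g$ pointwise gives
\[
-\tfrac{1}{6}\tilde\theta_1^2 g^{-3}\sigma^2 f \;+\; \lambda f g^{-1} \;=\; 0,
\]
whence $g^2 = \tilde\theta_1^2 \sigma^2(\tilde\phi_1)/(6\lambda)$, so the stationary density is proportional to $\sigma(\tilde\phi_1)$: writing $g(\tilde\phi_1) = c\,\sigma(\tilde\phi_1)$, the shape of the optimal quantizer is fixed and only the scalar $c$ remains free.

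Next I would impose the entropy constraint to fix $c$. Substituting $g = c\sigma$ into \rref{eq:opt-e-const} yields
\[
H_d(f) + \log c + \int f(\tilde\phi_1)\log \sigma(\tilde\phi_1)\,d\tilde\phi_1 \;=\; \log M,
\]
so $\log c = \log M - H_d(f) - \int f\log\sigma\,d\tilde\phi_1 = \log M + L$, i.e.\ $c = M\exp L = KM$, which is exactly \rref{eq:gentroopt}. Plugging $g_v = KM\sigma$ back into the cost gives
\[
\int {\cal F}(g_v)\,d\tilde\phi_1 \;=\; \frac{1}{12}\tilde\theta_1^2 (KM)^{-2}\!\int \sigma^{-2}\sigma^2 f\,d\tilde\phi_1 \;=\; \frac{1}{12}\tilde\theta_1^2 K^{-2}M^{-2},
\]
which is \rref{eq:mine}.

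The last step is to argue that this stationary point is the global minimum rather than a saddle. The main obstacle here is that ${\cal F}(g) = \frac{1}{12}\tilde\theta_1^2 g^{-2}\sigma^2 f$ is convex in $g$ on $g>0$ (its second derivative is $\tilde\theta_1^2\sigma^2 f g^{-4} > 0$) while $-\log g$ is also convex, so the Lagrangian is convex in $g$ pointwise; the unique critical point is therefore the unconstrained minimum on the affine constraint set, giving global optimality. I would also briefly note that the same high-resolution smoothness assumptions from Assumption~\ref{assumption:smooth} used to derive \rref{eq:vari1001app} are what justify the asymptotic entropy approximation \rref{eq:entropy}, so all manipulations above are valid in the same regime. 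Full details go to Appendix~\ref{appendex:proofs}, in parallel with the proof of Theorem~\ref{theorem:fixed-rate}.
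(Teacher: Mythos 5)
Your main derivation is correct and is essentially the paper's own argument: form the Lagrangian with multiplier $\lambda$ for the constraint \rref{eq:opt-e-const}, note that no $g'$ appears so the Euler--Lagrange condition reduces to pointwise stationarity, conclude $g\propto\sigma(\reg)$, fix the proportionality constant from the entropy constraint to get \rref{eq:gentroopt}, and substitute back (using $\int f\,d\tilde{\phi}_1=1$) to obtain \rref{eq:mine}. The only cosmetic difference is that the paper sets the Euler--Lagrange constant to zero and expresses the coefficient as $(2/\lambda)^{1/2}$, whereas you bypass solving for $\lambda$ and determine the coefficient $c$ directly from the constraint; this is immaterial.

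The one genuine flaw is in your final paragraph on global optimality. At the stationary point your own computation gives $\lambda=\frac{1}{6}\tilde{\theta}_1^2 g^{-2}\sigma^2>0$, so the constraint enters the Lagrangian as $+\lambda f\log g$, a \emph{concave} term in $g$ (not a multiple of $-\log g$), and the pointwise integrand $\frac{1}{12}\tilde{\theta}_1^2\sigma^2 f g^{-2}+\lambda f\log g$ is not convex for large $g$ (its second derivative becomes negative); moreover the feasible set $\{g:\int f\log g\,d\tilde{\phi}_1=\log M-H_{\rm d}(f)\}$ is affine in $\log g$, not in $g$. So ``convex Lagrangian on an affine constraint set'' does not hold as stated. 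The fix is to change variables to $u=\log g$: the cost $\int\frac{1}{12}\tilde{\theta}_1^2\sigma^2 f e^{-2u}\,d\tilde{\phi}_1$ is convex in $u$ and the constraint is affine in $u$, so the stationary point is the global constrained minimum; equivalently, Jensen's inequality gives $\int f g^{-2}\sigma^2\,d\tilde{\phi}_1\geq\exp\!\left(\int f\log(g^{-2}\sigma^2)\,d\tilde{\phi}_1\right)=K^{-2}M^{-2}$ under \rref{eq:opt-e-const}, with equality iff $g\propto\sigma$, which is the classical Gish--Berger argument the paper cites. Note the paper's own proof stops at the first-order condition, so your core argument is on par with it; only the added optimality claim needs the repair above (and, trivially, the second derivative of ${\cal F}$ is $\frac{1}{2}\tilde{\theta}_1^2\sigma^2 f g^{-4}$, not $\tilde{\theta}_1^2\sigma^2 f g^{-4}$).
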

The proof is in Appendix~\ref{appendex:proofs}.

\begin{note}\rm
It is interesting that 
the optimal $\goptv$ is a simple linear function of $\sigma(\reg)$. 
The constant coefficient is also linear with respect to 
the number of expected quantization steps $M$.  
On the other hand, 
the convergence rate of the minimized cost function 
is $M^{-2}$; this is in common with the fixed-rate quantization.  
\hfill{$\diamondsuit$}
\end{note}
\begin{example}
\rm
When $f_{\tilde{\phi}}$ is the density function in a multidimensional normal distribution 
and $n$ is sufficiently large, as described in Example~\ref{note:number1},
\begin{eqnarray}
g_{\rm v}(\reg) & = & KM\sigma(\reg)
\sim
M \cdot \exp(-H_{\rm d}(f))
\nonumber
\\
\int {\cal F}(g_{\rm v}(\tilde{\phi}_1)
) d\tilde{\phi}_1
&\sim&
\frac{1}{12}\th1^2
\exp(2H_{\rm d}(f)) n \sigma_o^2 M^{-2}
 = 
\frac{1}{12}\th1^2
2e \pi n \sigma_o^4 M^{-2}
\sim
0.4533
\pi \th1^2 n \sigma_o^4 M^{-2}.
\label{eq:gosa200}
\end{eqnarray}
By comparison with \rref{eq:gosa100} and \rref{eq:gosa200}, 
it can be seen 
that variable-rate optimal coding achieves approximately half the magnitude of the square of
 the quantization error 
compared with $\goptf$ for fixed-rate quantization. 
\hfill{$\diamondsuit$}
\end{example}


\section{
Quantization in Coarse Resolution 
}
\label{sec:prev}


In the previous section, we give the optimal quantization in high
resolution for general probability densities of input signals.  
The results are enough for understanding the profile of the optimal
quantization, however, 
as explained in Note~\ref{note:reason}, 
its minute structure around the origin is not clear in the case of coarse quantization.  
In this section, we do not necessarily suppose high resolution of quantization
and derive the optimal quantization, however, under limited assumption as follows.   

%
\begin{assumption}
\label{assumption:dist}
$f(\phi)$ is a probability density function 
such that $f(\tilde{\phi})$ is uniform distribution in $\tilde{\phi}_i \in
[-\kappa, \, \kappa]$ with a given $\kappa$ $(\in {\cal R}) > 0$.
\end{assumption}

The optimization problem under this assumption has clear significance
for the following cases: 
(1) to clarify the minute quantization scheme around the origin of $y$
because the profile of the multidimensional probability densities 
of usual input signals in system identification, e.g., normal
distribution, is flat around the origin.  In such subsection, 
the quantization is comparatively coarse and the probability density can
be approximated as a uniform distribution.  
The important fact is that such property of the flatness of the
probability density around the origin does not depend on the choice of the
base in the space of $\phi$.  This means the condition of
Assumption~\ref{assumption:dist} is always satisfied around the origin 
in usual situation of system identification.  
(2) to consider the first order systems where input signals obey a
uniform distribution.  In this case, 
the analytic optimal solution in coarse quantization can be given and 
it is enough for the main subject of this paper to clarify the essential
properties of the optimal quantizers for parameter estimation.

When Assumption~\ref{assumption:dist} is satisfied, 
as similar to the case of Section~\ref{section:main},
$\frac{1}{N}U^{\rm T}U$ and $\frac{1}{N}\tilde{U}^{\rm T}\tilde{U}$ also converge to
$\sigma_u^2 I$ when $N \rightarrow \infty$, 
then 
the optimal quantization problem is also reduced to
minimize 
$\V
\left[
U^{\rm T}E
\right]
\left(
= \V
\left[
\tilde{U}^{\rm T}E
\right]
\right)
$ of \rref{eq:star}
subject to a bias free condition:
$\E
\left[
U^{\rm T}E
\right] = 0
$
$
\left(
\mbox{equivalently }\;
\E
\left[
\tilde{U}^{\rm T}E
\right] = 0
\right)$, i.e. \rref{eq:biaszero} and \rref{eq:bias2}.

Under Assumption~\ref{assumption:dist}, it is obvious that 
\begin{equation}
\int \tilde{\phi}_{k} f(\tilde{\phi}_1, \tilde{\phi}_{k})
d\tilde{\phi}_{k} = 0
\label{eq:bias100}
\end{equation}
for $k\not = 1$, then, \rref{eq:biaszero} is automatically satisfied. 
Therefore, the bias-free condition is reduced to \rref{eq:bias2}.   
Moreover, 
\rref{eq:bias2} 
means 
\begin{equation}
\int \reg e(\reg)f(\tphi_1, \tphi_2, \dots , \tphi_n) d\reg = 0
\label{eq:sim}
\end{equation}
under Assumption~\ref{assumption:dist}. 
A sufficient condition for \rref{eq:bias2} is 
\begin{eqnarray}
\E_{\I_j}
\left[
\tilde{\phi}_{1}
{e}(\reg)
\right]
: = 
\int_{\reg\in \I_j}
\reg e(\reg) f(\reg) d\reg
 = 
\int_{\reg\in \I_j}
\reg (\qy_\j-\th1\reg) f(\reg) d\reg
 = 0, \;\; \forall j. 
\label{eq:limitE}
\end{eqnarray}
This condition is sufficiently reasonable for 
the representative number $\qy_\j$ of the subsection $\S_j$ 
(or the corresponding $\I_j$ on $\reg$).  

On the other hand, 
we can derive the following key lemma
for the cost function 
$\V[{U}^{\rm T}E] \left(= V[\tilde{U}^{\rm T}E]\right)$ 
of \rref{eq:star}: 
\begin{lemma}
\label{lemma:vari0}
Subject to the conditions: 
\begin{eqnarray}
&&
\int
\tilde{\phi}_{h}
f(\tilde{\phi}_{1}, 
\dots, 
\tilde{\phi}_{h}, 
\dots,
\tilde{\phi}_{n}
)
d\tilde{\phi}_{h} 
 = 0, \; \forall h = 1, 2, \dots , n
\label{eq:tenkaijyoken}
\\
\mbox{and}
&&
\int
\reg
{e}(\reg)
f(\reg)
d\reg
 = 0,
\label{eq:tenkaijyoken2}
\end{eqnarray}
\begin{equation}
\E
\left[
\left(
\sum_{\i = 1}^N
\tilde{\phi}_{k}(\i)
{e}(\reg(\i))
\right)^2
\right]
 = 
\left\{
\begin{array}{ll}
N
\int
\tilde{\phi}_{1}^2
{e}^2(\reg)
f(\tilde{\phi}_1)
d\tilde{\phi}_1
& \mbox{ for }\; k = 1
\cr
&
\cr
N
\int
\tilde{\phi}_{k}^2
{e}^2(\reg)
f(\tilde{\phi}_1, \tilde{\phi}_{k})
d\tilde{\phi}_1
d\tilde{\phi}_{k}
& \mbox{ for }\; k \not =  1
\end{array}
\right.
\label{eq:tenkai}
\end{equation}
is satisfied.  
\end{lemma}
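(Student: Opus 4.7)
My plan is to expand the square and treat diagonal ($t=s$) and off-diagonal ($t\neq s$) terms separately. After expanding,
\[
\E\!\left[\Bigl(\sum_{t=1}^N \tilde{\phi}_k(t)\,e(\reg(t))\Bigr)^{2}\right]
=
\sum_{t=1}^{N} \E\!\left[\tilde{\phi}_k^{2}(t)\,e^{2}(\reg(t))\right]
+
\sum_{t\neq s} \E\!\left[\tilde{\phi}_k(t)\,e(\reg(t))\,\tilde{\phi}_k(s)\,e(\reg(s))\right],
\]
the problem reduces to (i) evaluating one typical diagonal summand and (ii) showing every off-diagonal summand vanishes.

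For the diagonal part, stationarity of $\{u(t)\}$ makes the $N$ summands identical, yielding $N\,\E[\tilde{\phi}_k^{2}\,e^{2}(\reg)]$. The integrand depends only on $\tilde{\phi}_1$ when $k=1$ and only on $(\tilde{\phi}_1,\tilde{\phi}_k)$ when $k\neq 1$; marginalising $f(\tilde{\phi}_1,\dots,\tilde{\phi}_n)$ over the other coordinates produces $f(\reg)$ or $f(\reg,\tilde{\phi}_k)$ respectively, and \rref{eq:tenkai} drops out.

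All the real work is in killing the off-diagonal terms. I split by the overlap between the time-windows that support $\phi(t)$ and $\phi(s)$. When $|t-s|\geq n$ the windows are disjoint, so $\tilde{\phi}(t)$ and $\tilde{\phi}(s)$ are independent and the expectation factorises. The $k=1$ factor is zero immediately by \rref{eq:tenkaijyoken2}; for $k\neq 1$ the factor equals $\int e(\reg)\bigl(\int \tilde{\phi}_k f(\reg,\tilde{\phi}_k)\,d\tilde{\phi}_k\bigr)d\reg$ and the inner integral is zero by \rref{eq:tenkaijyoken}. When $0<|t-s|<n$ the windows overlap, but at least $|t-s|\geq 1$ of the $u$-samples in $\phi(t)$ are absent from $\phi(s)$ (and vice versa). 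I would condition on the shared $u$-samples, and then use the product structure of $f(\tilde{\phi})$ inherited from Assumption~\ref{assumption:dist} to integrate one exclusive $u$-sample out; the moment conditions \rref{eq:tenkaijyoken}/\rref{eq:tenkaijyoken2} then force one of the two factors of the conditional expectation to vanish pointwise in the conditioning variables.

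The main obstacle I expect lies in this overlapping case. To legitimately invoke \rref{eq:tenkaijyoken} or \rref{eq:tenkaijyoken2} after conditioning, one must translate the uniform-cube assumption on $\tilde{\phi}$ back through the orthogonal transformation $T$ into a usable statement about the joint distribution of overlapping $u$-windows, and carry out a change of variables that converts integration over a free $u$-coordinate into integration over a $\tilde{\phi}$-coordinate to which a moment condition applies. Showing that the Jacobian and the conditional density combine cleanly, so that exactly the integrals appearing in \rref{eq:tenkaijyoken}-\rref{eq:tenkaijyoken2} are produced (and not some weighted variant), is the step where the uniform-distribution hypothesis is doing its essential work and is the delicate part of the argument.
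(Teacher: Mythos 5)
Your overall skeleton --- expand the square, read off the diagonal sum as $N\,\E[\tilde{\phi}_k^2 e^2(\reg)]$ and marginalise, then show every cross term vanishes --- is exactly the paper's. The difference is in how the cross terms are killed, and your version of that step has a gap. The paper never conditions and never invokes independence of disjoint windows: it writes every off-diagonal summand as $\E[\rregh\, e(\rregi)\, \rregj\, e(\rregk)]$ for some index pattern $(a,b,c,d)$, classifies the patterns ($a\neq b\neq c\neq d$; $a=b\neq c\neq d$; $a=b\neq c=d$; $a=c,\ b=d$), and in each nondiagonal case integrates out a single coordinate of the joint density, observing that the resulting inner integral is literally the left-hand side of \rref{eq:tenkaijyoken}, or of \rref{eq:tenkaijyoken2} in its pointwise form \rref{eq:sim}, hence zero. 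No product structure, no Jacobian, and no appeal to Assumption~\ref{assumption:dist} (which is not among the lemma's hypotheses) is needed; the two stated conditions are used directly as identities about inner integrals.

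The concrete problem with your plan for the overlapping case $0<|t-s|<n$ is this: after conditioning on the shared $u$-samples, the factor you want to annihilate is $\E[\tilde{\phi}_k(t)\,e(\reg(t)) \mid \mbox{shared } u]$, and to make it vanish you need \emph{conditional} versions of \rref{eq:tenkaijyoken}--\rref{eq:tenkaijyoken2}. Those are not implied by the hypotheses: the quantized values, and hence $e(\cdot)$, are pinned down by the bias-free requirement $\E_{\I_j}[\reg\, e(\reg)]=0$ taken against the \emph{unconditional} marginal $f(\reg)$, and conditioning on part of the input window changes the law of $\reg(t)$ to a restricted/shifted one on which $\int \reg\, e(\reg)\,d\reg$ over each cell has no reason to vanish. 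So the step you flag as ``delicate'' is not mere bookkeeping with Jacobians --- as set up, the moment conditions simply do not apply to the conditional law and the argument stalls there. The repair is the paper's route: do not condition; keep the full joint density of the four coordinates appearing in the cross term and integrate out exactly one of them, for which \rref{eq:tenkaijyoken} (a lone factor $\rregj$) or \rref{eq:sim} (a factor $\rregj\, e(\rregj)$) applies verbatim. Your treatment of the diagonal terms and of the fully disjoint windows is fine.
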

The proof of this lemma is in Appendix~\ref{appendex:proofs}.  


Assumption~\ref{assumption:dist} automatically guarantees the condition
\rref{eq:bias100}, i.e. \rref{eq:tenkaijyoken}, and therefore
with the bias-free condition \rref{eq:tenkaijyoken2}, 
\rref{eq:tenkai} 
follows from Lemma~\ref{lemma:vari0}. 
With these preliminaries, we formulate the problem considered in this
section: 
\begin{problem}
\label{prob:optimals}
Let $\Mo$ be the number of quantized subsections $\S_j$ 
of $[-\kappa_y, \, \kappa_y] : =  [-\kappa \th1, \, \kappa \th1]$ 
on $y$ (i.e., $\I_j$ of $[-\kappa, \, \kappa]$ on
 $\reg$) where $\Mo \geq 2$.  
For the system \rref{eq:sys} 
with Assumption~\ref{assumption:dist} and a fixed $\Mo$, 
find a quantizer $q$ that minimizes 
\begin{equation}
\V
\left[{U}^{\rm T}E
\right]
\left(=
\V
\left[\tilde{U}^{\rm T}E
\right]
\right)
=
\sum_{k=1}^n
\E
\left[
\left(
\sum_{\i = 1}^N
\tilde{\phi}_{k}(\i)
{e}(\reg(\i))
\right)^2
\right]
=
N
\int
\sigma^2(\reg)
e^2(\reg)
f(\reg)
d\reg
\label{eq:variance2}
\end{equation}
such that 
$\E_{\I_j}\left[
\reg e(\reg)
\right] = 0$ for all $j$.
\end{problem}
The reason for the constraint $\Mo \geq 2$ is described in Note~\ref{note:odd}.

As described in Section~\ref{sec:formulation}, the quantization scheme
of $[-\kappa\tilde{\theta}_1 , \, \kappa\tilde{\theta}_1]$ 
on $y$ is essentially equal to that of $[-\kappa, \, \kappa]$ on $\reg$ 
and it is completely defined by the setting of the subsections
$\I_{-\M}$, $\dots$ , $\I_{-2}$, $\I_{-1}$,
$\I_{0}$, 
$\I_{1}$, $\I_{2}$, $\dots$ , $\I_{\M}$,
where 
\begin{equation}
\M: = 
\left\{
\matrix{
\frac{1}{2}\Mo & \mbox{ for even } \Mo \;\; (\geq 2)
\cr
\vbox to 18pt{}
\frac{1}{2}(\Mo-1) & \mbox{ for odd } \Mo \;\; (\geq 3)
}
\right.,
\label{eq:defofM}
\end{equation}
and the assigned quantized values
\[
q(y)|_{y\in \S_j}
=q(\reg\th1)|_{\reg \in \I_j}
=\qy_\j
\]
for each subsection $\S_j$ or $\I_j$ (see Fig.~\ref{fig:quant}).
Therefore, optimization of the quantization is reduced to 
a minimization problem of 
$\V[U^{\rm T}E]$
of approximately 
2$M$-variables ($d_{-(\M-1)}$, $\dots$, $d_{\M-1}$ 
and $\qy_{\langle -\M \rangle}$, $\dots$, $\qy_{\langle \M \rangle}$, 
note that $d_\M=\kappa \th1$ and $d_{-\M}=-\kappa \th1$).

\vspace{1cm}

\centerline{
\begin{minipage}[c]{7cm}
\centerline{
\epsfxsize = 200pt
\epsfbox{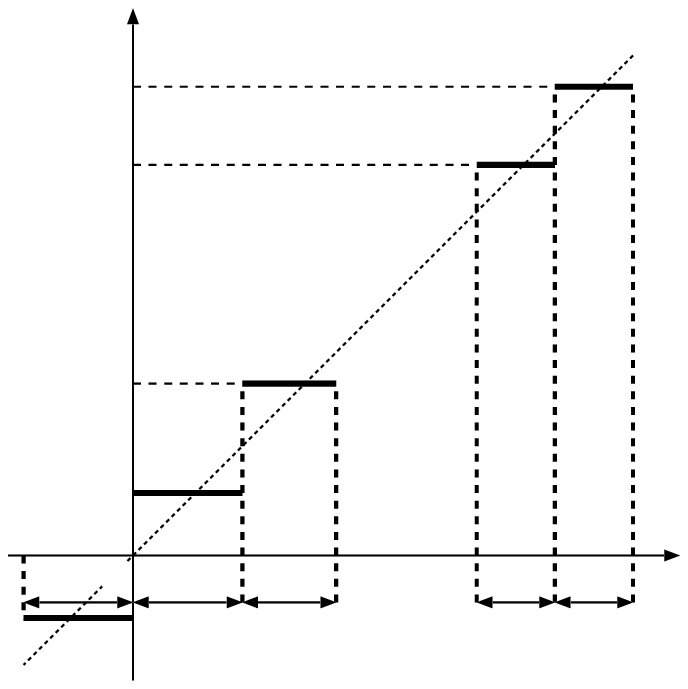}{
\tput(98,13,{$y\, ( = \reg\tilde{\theta}_1)$})
\tput(34,8,{$d_{1}$})
\tput(48,8,{$d_{2}$})
\tput(67,8,{$d_{j-1}$})
\tput(80,8,{$d_{j}$})
\tput(90,8,{$d_{j+1}$})
\tput(6,-4,{{$\matrix{\S_{-1}\cr \updownarrow \cr \I_{-1}}$}})
\tput(25,-4,{{$\matrix{\S_1\cr \updownarrow \cr \I_1}$}})
\tput(39,-4,{{$\matrix{\S_2\cr \updownarrow \cr \I_2}$}})
\tput(56,27,{\Large {$\cdots$}})
\tput(100,27,{\Large {$\cdots$}})
\tput(73,-4,{{$\matrix{\S_j\cr \updownarrow \cr \I_j}$}})
\tput(82,-4,{{$\matrix{\S_{j+1}\cr \updownarrow \cr \I_{j+1}}$}})
\tput(6,97,{$q(y)$})
\tput(11,28,{{$\qy_{\langle 1 \rangle}$}})
\tput(11,44,{{$\qy_{\langle 2 \rangle}$}})
\tput(11,76,{{$\qy_{\j}$}})
\tput(6,88,{{$\qy_{\langle j+1 \rangle}$}})
\tput(14,15,{{$O$}})
\tput(90,97,{$q(y) = y$})
}
}
\refstepcounter{figure}\label{fig:quant}
\vspace{10mm}
\centerline{Fig.\,{\thefigure} \hspace{2mm} The quantization 
scheme of $q$}
\end{minipage}
}

\vspace{5mm}

In this section, we consider the case of even $\Mo$. 
The case of odd $\Mo$, that is, $\S_0 \not= \{0\}$ ($\I_0 \not= \{0\}$), 
is reduced to the even case and 
the reason is explained in Note~\ref{note:odd}.  
We also refer to the positive domain $\S_1$, $\S_2$, $...$ because of 
quantization symmetry. 

It is known that when a subsection $\S_j$ is fixed
(i.e. $d_{j-1}$ and $d_{j}$ are fixed), 
$\qy_\j$ is given by 
the bias-free condition 
$\E_{\I_j}\left[
\reg e(\reg)
\right] = 0$.   
Therefore, the optimization problem is reduced to finding optimal 
$d_{-(\M-1)}$, $\dots$, $d_{\M-1}$.  
Corresponding to $d_j$, 
we introduce key variables, ratios $r_j$ ($j = 1, \, \dots, \frac{1}{2}\Mo
-1$) between $d_j$ and $d_{j+1}$ defined by:
\begin{equation}
d_j = r_j d_{j+1}, \; r_j \in [0, 1].  
\label{eq:rdef}
\end{equation}
Note that determining optimal $d_{-(\M-1)}$, $\dots$, $d_{\M-1}$ is equal to 
determining optimal $r_{-(\M-1)}$, $\dots$, $r_{\M-1}$ and we derive the following result.   

\begin{proposition}
\label{th:mainprop}
%
The optimal ratios $r_j^o$ for Problem~\ref{prob:optimals}
are given by solving the following recursive optimization problem iteratively.
\begin{eqnarray}
&&
r_j^o = \arg \min_{r \in [0, \, 1]} 
\left(
d_{j+1}^5\psi(r ; \psi_{j-1}^{\min})
+20\kappa_y^2(n-1)d_{j+1}^3
\xi(r ; \xi_{j-1}^{\min})
\right)
\label{eq:ratio0}
\\
&&
\psi(r ; \alpha) : =  
\alpha r^5  -18(1-r)^5 + 45 (1+r)^2(1-r)^3 + 5(1-r)^7(1+r)^{-2}
\label{eq:function0}
\\
&&
\psi_j^{\min} : =  \psi(r_j^o ; \psi_{j-1}^{\min})
\nonumber
\\
&&
\psi_0^{\min}: = 
32
\nonumber
\\
&&
\xi(r ; \alpha) : =  
\alpha r^3  + 3 (1-r)^3 + \frac{(1-r)^5}{(1+r)^2}
\label{eq:function0_xi}
\\
&&
\xi_j^{\min} : =  \xi(r_j^o ; \xi_{j-1}^{\min})
\nonumber
\\
&&
\xi_0^{\min}: = 
4.
\nonumber
\end{eqnarray}
The optimal value of 
\rref{eq:variance2}
is given by
\begin{eqnarray}
\min_q \V\left[{U}^{\rm T}E\right]
\left(
=
\min_q \V\left[\tilde{U}^{\rm T}E\right]
\right)
=
\min_q \sum\limits_{j = -\M}^\M
\V_{\I_j}\left[
\tilde{U}^{\rm T}E
\right]
&=&
\frac{N}{2160}
\kappaY^4
(
\psi_{\M-1}^{\min} + 20(n-1)\xi_{\M-1}^{\min}
)
\nonumber
\\
&=& 
\frac{N}{2160}
\tilde{\theta}_1^4
\kappa^4
(
\psi_{\M-1}^{\min} + 20(n-1)\xi_{\M-1}^{\min}
).
\label{eq:vari00}
\end{eqnarray}
\end{proposition}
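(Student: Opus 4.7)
The strategy is to reduce the multivariate optimisation to a coupled scalar recursion by combining the bias-free constraint with the homogeneous scaling $d_{j-1}=r_{j-1}d_j$.

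Under Assumption~\ref{assumption:dist} one has $f(\reg)=1/(2\kappa)$ and, by direct integration, $\sigma^2(\reg)=\reg^2+(n-1)\kappa^2/3$. The uniform marginals satisfy $\int\tilde{\phi}_h f\,d\tilde{\phi}_h=0$ for every $h=1,\dots,n$, so Lemma~\ref{lemma:vari0} applies and \eqref{eq:variance2} decomposes by symmetry as
\[
V[\tilde{U}^{\rm T}E]=\frac{N}{\kappa}\sum_{j=1}^{\M}\biggl(\int_{\I_j}\reg^2 e^2(\reg)\,d\reg+\frac{(n-1)\kappa^2}{3}\int_{\I_j}e^2(\reg)\,d\reg\biggr),
\]
where the sums run only over the positive subsections.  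The per-subsection bias constraint $\int_{\I_j}\reg(\qy_\j-\th1\reg)\,d\reg=0$ uniquely fixes $\qy_\j=2\th1(a^2+ab+b^2)/[3(a+b)]$ with $a=d_{j-1}/\th1,\,b=d_j/\th1$, so on $\I_j$ the error $e=\qy_\j-\th1\reg$ is a polynomial in $\reg$.

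Substituting this and setting $r=a/b=r_{j-1}$, direct antidifferentiation yields $\int_{\I_j}e^2\,d\reg=\th1^2 b^3(1-r)^3(1+r+r^2)/[9(1+r)^2]$ and a companion degree-$5$ integral $\int_{\I_j}\reg^2 e^2\,d\reg=\th1^2 b^5\,\tilde p(r)$.  The cubic identity $3(1+r)^2+(1-r)^2=4(1+r+r^2)$ rewrites the first as $\xi(r;0)/36$, while the analogous quintic identity
\[
-18(1-r)^2(1+r)^2+45(1+r)^4+5(1-r)^4=32+160r+336r^2+160r^3+32r^4,
\]
multiplied through by $(1-r)^3(1+r)^{-2}$, identifies $2160\,\tilde p(r)$ with $\psi(r;0)=-18(1-r)^5+45(1-r)^3(1+r)^2+5(1-r)^7(1+r)^{-2}$.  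Evaluating at $r=r_0=0$ (forced by $d_0=0$) produces the initial data $\psi_0^{\min}=32$ and $\xi_0^{\min}=4$.

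Because $d_{j-1}=r_{j-1}d_j$, the cumulative sums $\Psi_j:=\sum_{i=1}^j(d_i/d_j)^5\,\tilde p(r_{i-1})$ and $\Xi_j:=\sum_{i=1}^j(d_i/d_j)^3\,\tilde q_2(r_{i-1})$ obey
\[
\Psi_j=r_{j-1}^5\Psi_{j-1}+\tilde p(r_{j-1}),\qquad \Xi_j=r_{j-1}^3\Xi_{j-1}+\tilde q_2(r_{j-1}),
\]
which, after the normalisations $\psi_{j-1}^{\min}=2160\Psi_j$ and $\xi_{j-1}^{\min}=36\Xi_j$, become exactly $\psi_j^{\min}=\psi(r_j^o;\psi_{j-1}^{\min})$ and $\xi_j^{\min}=\xi(r_j^o;\xi_{j-1}^{\min})$.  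Substituting back into the total cost at the $(j+1)$th level factors out a common positive prefactor and reduces to the minimisation target \eqref{eq:ratio0}; because $\psi(r;\alpha)$ and $\xi(r;\alpha)$ are monotone increasing in $\alpha\ge 0$ on $r\in[0,1]$, a principle-of-optimality argument propagates the sequential scalar minimisation through all $\M-1$ stages, and substituting $d_\M=\kappa_y$ yields \eqref{eq:vari00}.  The main obstacle is the polynomial identification $2160\,\tilde p(r)=\psi(r;0)$: bookkeeping-heavy but routine once the common factor $(1-r)^3(1+r)^{-2}$ is recognised.  A secondary subtlety is checking that a single $r_j^o$ simultaneously optimises the coupled $\psi$-$\xi$ recursion at each stage, which holds because \eqref{eq:ratio0} is already a single minimisation of a convex combination of the two monotone terms over one $r$.
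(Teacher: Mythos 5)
Your proposal follows the paper's proof essentially step for step: the per-subsection bias-free condition pins down $\qy_\j$ (equivalently the offsets $h_j$), the per-subsection variance integrates to an explicit rational function of the ratio $r$ whose $r=0$ values give the seeds $\psi_0^{\min}=32$ and $\xi_0^{\min}=4$, and the scaling $d_{j-1}=r_{j-1}d_j$ folds the accumulated sum into the $\alpha r^5$ and $\alpha r^3$ head terms, yielding exactly the recursion \rref{eq:ratio0} with the dynamic-programming caveat that the minimizer depends on $d_{j+1}$. The one point to reconcile is normalization: you carry the cross-term of $\sigma^2(\reg)$ as $(n-1)\kappa^2/3$ and integrate on the $\reg$-axis, whereas the paper's computation of $\V_{\I_j}$ uses $(n-1)\kappa_y^2/3$ and the $y$-axis density $1/(2\kappa_y)$; since $\kappa_y=\kappa\tilde{\theta}_1$ the relative weighting of the $\psi$- and $\xi$-terms, and hence every $r_j^o$, is unchanged, but your overall prefactor comes out as $\tilde{\theta}_1^2\kappa^4$ rather than the $\tilde{\theta}_1^4\kappa^4$ asserted in \rref{eq:vari00}, so you should track down that factor of $\tilde{\theta}_1^2$ before claiming the final constant.
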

See Appendix~\ref{appendex:proofs} for the proof. 

\begin{note}
\label{note:odd}
\rm
For odd $\Mo$, 
there must not exist a subsection $\S_0$ (i.e. $\I_0$) of nonzero width that contains the
 origin of $y$ (i.e., origin of $\reg$)
 because for any such subsection and setting $\qy_{\langle 0 \rangle}$, 
$\E_{\I_0}
\left[
\reg e(\reg)\right] \not =  0
$.
This means that 
$\S_0$ (i.e. $\I_0$) should be $\{ 0 \}$ 
and consequently
the problem is equal to the case of even $\Mo$ with the setting $\M = \frac{1}{2}(\Mo -1)$.  
\hfill{$\diamondsuit$}
\end{note}

\begin{example}
\rm

Consider the following second-order FIR model as an example of \rref{eq:sys}:
\begin{equation}
y(\i) = \theta_1 u(\i) + \theta_2 u(\i-1),
\label{eq:2ndFIR}
\end{equation}
where $\theta_1 = \frac{\sqrt{3}}{2}$ and $\theta_2 = \frac{1}{2}$ 
and the system is noise free.
We generate 50 sets of I/O data sequences with a length $N=10,000$ for the system 
\rref{eq:2ndFIR} that obey Assumption~\ref{assumption:dist} and
$\kappa = 4$ (i.e., $\kappaY = 4$). 
Fig.~\ref{fig:histogram} is one of the histogram of $10,000$ samples of
$\reg$ from 50 sets. 

%
%

Next, quantize the output data $y$ with the optimal quantizers given by
Proposition~\ref{th:mainprop} and with uniform quantizers, for
comparison, subject to the constraints $\M = 5$ ($\Mo = 10$).
Fig \ref{fig:optm5} shows the step function $q$ for $y$ of the optimal
quantizer for $\M = 5$.  
Fig \ref{fig:optm5} 
indicates a basic
property of the optimal quantizer, that is, it is coarse near the origin
and becomes denser away from the origin.  

The bias term $\frac{1}{N}\sum_{\i = 1}^{N} \reg(\i) e(\i)$
and the quantization error term $\Delta {E}$ were calculated;
Table~\ref{table:m5} shows a summary of the results.  
From Table~\ref{table:m5}, the optimal quantizer, which minimize $\V[U^{\rm T}E]$
attains a lower $\|\Delta {E}\|_2^2$ than that of the uniform quantizer.

\vspace{1cm}

\hfill
\hspace{.5cm}

\begin{minipage}[c]{8cm}
\centerline{
\epsfxsize = 170pt \epsfbox{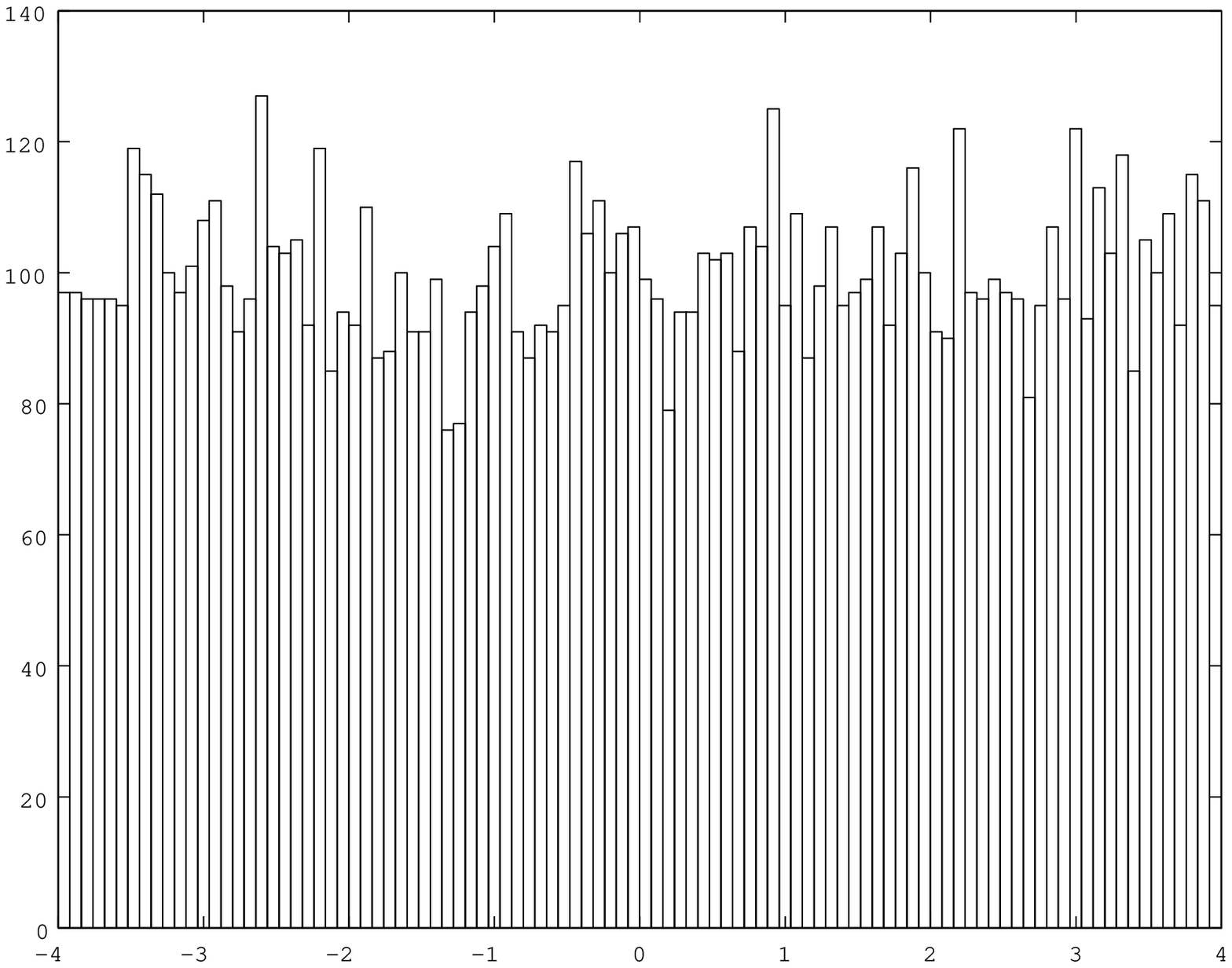}{
\tput(50,-10,{$\tilde{\phi}_1$})
\tput(-10,32,{\rotatebox{90}{number of $\tilde{\phi}_1$}})
}
}
\refstepcounter{figure}\label{fig:histogram}
\vspace{6mm}
\centerline{Fig.\,{\thefigure} \hspace{2mm} Histogram of $\reg$}
\end{minipage}
\begin{minipage}[c]{8cm}
\centerline{
\epsfxsize = 150pt \epsfbox{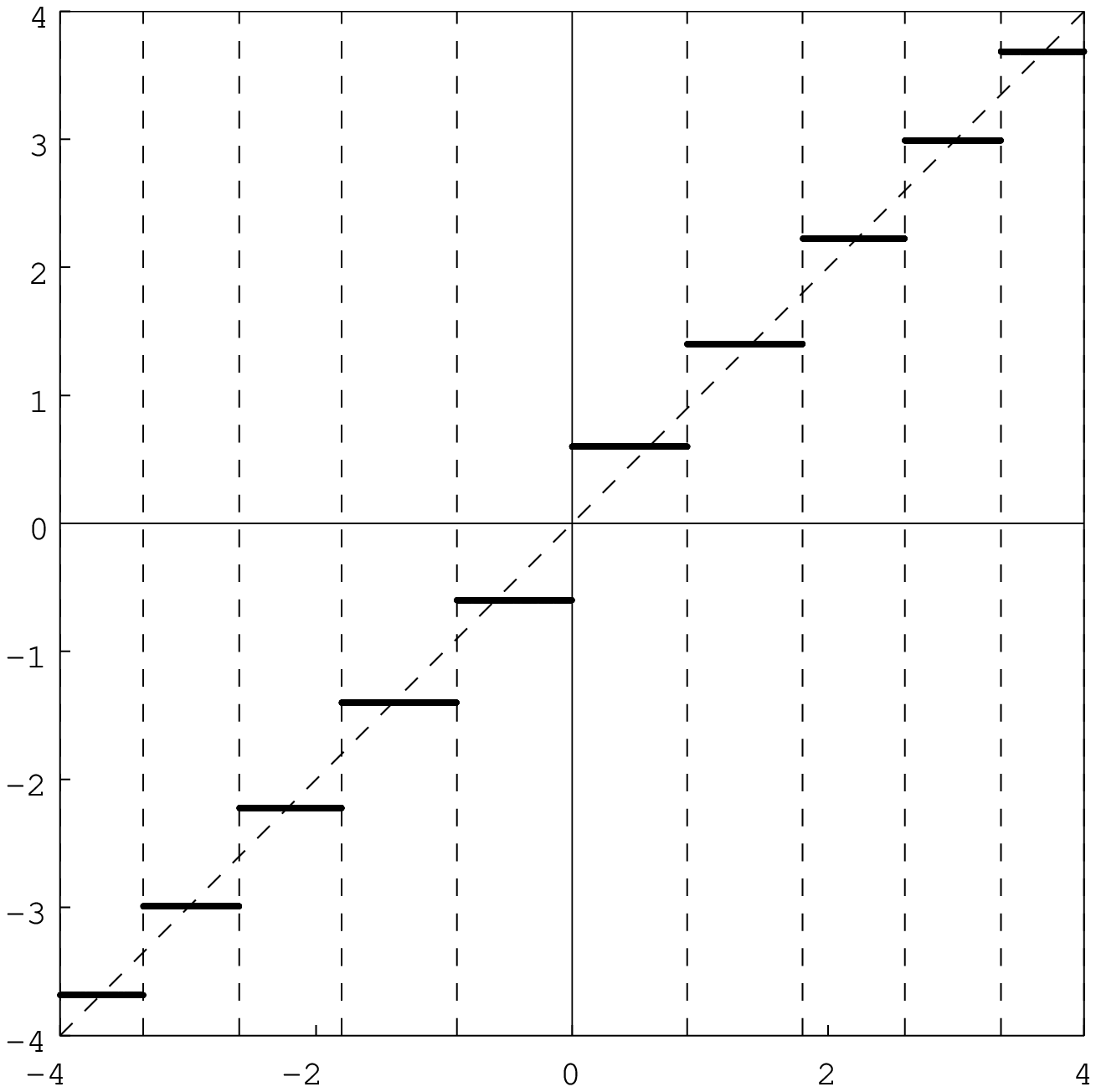}{
\tput(100,50,{$y$})
\tput(50,102,{$q(y)$})
}
}
\refstepcounter{figure}\label{fig:optm5}
\vspace{6mm}
\centerline{Fig.\,{\thefigure} \hspace{2mm} Optimal quantization scheme 
for $\M = 5$}
\end{minipage}

\vspace{.7cm}

\begin{minipage}[t]{16cm}
\refstepcounter{table}\label{table:m5}
\centerline{Table.\,{\thetable} \hspace{2mm}
The ratios of the biases and the squares of errors for $\M = 5$ (averages of 50 sets)
}
\vspace{.3cm}
\centerline{
\begin{tabular}{|c|c|}
\hline
\vbox to 12pt{}$|$ $\sum_{t=1}^{10000}\tilde{\phi}_1(t) e(t)|$ by 
opt. quant.
/
 $|$ $\sum_{t=1}^{10000}\tilde{\phi}_1(t) e(t)|$ by unif. quant. &
0.1107 \\
\hline
\vbox to 12pt{}$\|\Delta E\|_2^2$ by opt. quant.
/
 $\|\Delta E\|_2^2$ by unif. quant. & 
0.0132 \\
\hline
\end{tabular}
}
\end{minipage}


\hfill{$\diamondsuit$}
\end{example}

\vspace{5mm}

Proposition~\ref{th:mainprop} shows that the problem is in a category of the typical dynamic
programming and we can solve it by numerical calculation.  
In general, the computation complexity of this problem is high; 
however, 
the optimization problem \rref{eq:ratio0} can be
solved by very few calculation steps in special cases $n=1$ or
$n \gg 1$, respectively, 
as shown in the following theorem:
\begin{theorem}
\label{th:mainspec}
When $n=1$, the optimal ratios $r_j^o$ for Problem~\ref{prob:optimals}
are given by solving the following optimization problem iteratively.
\begin{eqnarray}
&&
r_j^o = \arg \min_{r \in [0, \, 1]} \psi(r; \psi_{j-1}^{\min})
\label{eq:ratio}
\\
&&
\psi_j^{\min} : =  \psi(r_j^o; \psi_{j-1}^{\min})
\nonumber
\\
&&
\psi_0^{\min}: = 
32.
\label{eq:min}
\end{eqnarray}
The optimal value of 
\rref{eq:variance2}
is given by
\begin{eqnarray}
\min_q \V\left[{U}^{\rm T}E\right]
\left(
=
\min_q \V\left[
\tilde{U}^{\rm T}E
\right]
\right)
= 
\frac{N}{2160}
\tilde{\theta}_1^4\kappa^4 \psi_{\M-1}^{\min}.
\label{eq:vari0}
\end{eqnarray}
Similarly,
when $n \gg 1$, the optimal ratios $r_j^o$ for Problem~\ref{prob:optimals}
converge to the solution of 
the following optimization problem.
\begin{eqnarray}
&&
r_j^o = \arg \min_{r \in [0, \, 1]} \xi(r; \xi_{j-1}^{\min})
\label{eq:ratio00}
\\
&&
\xi_j^{\min} : =  \xi(r_j^o; \xi_{j-1}^{\min})
\nonumber
\\
&&
\xi_0^{\min}: = 
4.
\label{eq:min00}
\end{eqnarray}
The optimal value of 
\rref{eq:variance2}
converges to
\begin{eqnarray}
\frac{N}{108}
\tilde{\theta}_1^4
\kappa^4
(n-1)\xi_{\M-1}^{\min}
.
\label{eq:vari000}
\end{eqnarray}
\end{theorem}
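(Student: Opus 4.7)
The plan is to derive both cases directly from Proposition~\ref{th:mainprop}, by analyzing what happens to the recursion \rref{eq:ratio0} and the optimal value \rref{eq:vari00} at $n=1$ (exact) and $n \gg 1$ (asymptotic). No new machinery is needed; the theorem is a corollary of the general recursive minimization.

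For the case $n=1$, the coefficient $20\kappa_y^2(n-1)$ in the objective of \rref{eq:ratio0} is identically zero, so the $\xi$-term vanishes at every recursion step. The remaining objective is $d_{j+1}^5\,\psi(r;\psi_{j-1}^{\min})$. Because $d_{j+1}^5>0$ is fixed at the time $r_j^o$ is chosen, minimizing over $r \in [0,1]$ is equivalent to minimizing $\psi(r;\psi_{j-1}^{\min})$ alone, which is exactly \rref{eq:ratio}. The $\psi_j^{\min}$ update and initial value $\psi_0^{\min}=32$ are inherited unchanged. Setting $n=1$ in the value formula \rref{eq:vari00} kills the $20(n-1)\xi_{\M-1}^{\min}$ contribution and leaves \rref{eq:vari0}.

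For the case $n \gg 1$, I would divide the objective in \rref{eq:ratio0} by the positive factor $20\kappa_y^2(n-1)d_{j+1}^3$ (allowed since it does not depend on $r$) to obtain
\begin{equation*}
\frac{d_{j+1}^2}{20\kappa_y^2(n-1)}\,\psi(r;\psi_{j-1}^{\min}) + \xi(r;\xi_{j-1}^{\min}).
\end{equation*}
Since $d_{j+1}\in[0,\kappa_y]$, the prefactor of $\psi$ is bounded by $1/(20(n-1))\to 0$ as $n\to\infty$. Provided $\psi_{j-1}^{\min}$ stays bounded, $\psi(\cdot;\psi_{j-1}^{\min})$ is a bounded continuous function on $[0,1]$, so the first summand tends to $0$ uniformly in $r$, and the minimizer converges to the minimizer of $\xi(r;\xi_{j-1}^{\min})$ alone. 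This yields \rref{eq:ratio00} together with its $\xi_j^{\min}$ recursion and initialization $\xi_0^{\min}=4$.

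The only nontrivial step is verifying that the required boundedness actually holds along the recursion of length at most $\M-1$. From the explicit form \rref{eq:function0}, one checks that $\psi(r;\alpha)$ maps $[0,1]\times[0,C]$ into a bounded set for any fixed $C$, so by finite induction starting from $\psi_0^{\min}=32$ the sequence $\{\psi_j^{\min}\}$ is bounded; the same holds for $\{\xi_j^{\min}\}$ from \rref{eq:function0_xi} with $\xi_0^{\min}=4$. Hence $\psi_{\M-1}^{\min}$ remains bounded while $20(n-1)\xi_{\M-1}^{\min}$ grows linearly in $n$, so \rref{eq:vari00} is asymptotically $\frac{N}{2160}\tilde{\theta}_1^4\kappa^4\cdot 20(n-1)\xi_{\M-1}^{\min}=\frac{N}{108}\tilde{\theta}_1^4\kappa^4(n-1)\xi_{\M-1}^{\min}$, matching \rref{eq:vari000}. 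The main obstacle, if any, is the bookkeeping of this uniform-in-$r$ convergence across the finite recursion; apart from that, the theorem is a clean specialization of Proposition~\ref{th:mainprop}.
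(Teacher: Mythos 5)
Your proposal is correct and follows essentially the same route as the paper, which gives no separate proof of Theorem~\ref{th:mainspec} but treats it exactly as you do: as a specialization of Proposition~\ref{th:mainprop}, where at $n=1$ the $\xi$-term in \rref{eq:ratio0} vanishes so the minimizer becomes independent of $d_{j+1}$ and the recursion decouples to \rref{eq:ratio}, while for $n \gg 1$ the normalized $\psi$-term is bounded by $1/(20(n-1))$ uniformly in $r$ and $d_{j+1}$, so the minimizers converge to those of the $\xi$-recursion and \rref{eq:vari00} is dominated by its $20(n-1)\xi_{\M-1}^{\min}$ part, giving \rref{eq:vari000}. The only point you leave implicit (as does the paper) is that passing from uniform convergence of the objective to convergence of the minimizers requires the minimizer of $\xi(\cdot\,;\xi_{j-1}^{\min})$ on $[0,1]$ to be unique, i.e.\ an analogue of Lemma~\ref{lemma:a1} for $\xi$.
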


\begin{note}
\rm
The definitive difference of the optimization problems 
\rref{eq:ratio0} and \rref{eq:ratio} or \rref{eq:ratio00}
is that in the former case, 
$r_j^o$ depends on $d_{j+1}$ and this requires a complex calculation
such as dynamic programming, on the other hand, 
in the latter cases,  
$r_j^o$ does not depend on $d_{j+1}$ and 
$\{r_j^o\}$ can be given by solving \rref{eq:ratio} or \rref{eq:ratio00}
from $j=1$ to $j=\M-1$ in turn only once.  
This means that the original minimization problem of approximately 
$2M$-variable
function $\V\left[{U}^{\rm T}E\right]$ can be reduced to a
recursive minimization problem of a single one-variable rational
function when $n=1$ or $n \gg 1$.  
Moreover, when $n=1$, 
from Lemma~\ref{lemma:a1} in Appendix~\ref{appendex:proofs}, 
the local minimum of 
$\phi(r; \alpha)$, $\alpha >0$, 
in $r \in (0, \; 1)$ is unique. 
Therefore, finding the minimizer does not require a highly complex
 calculation.  
\hfill{$\diamondsuit$}
\end{note}



In the following of this section, 
we focus on the case $n=1$ 
because it is a basic problem and reveals typical property of the
optimal quantization. 
We call the optimal quantization scheme as \Qopt hereafter.

Every optimal ratio $r_j^o$ can be explicitly determined by solving \rref{eq:ratio} --
\rref{eq:min} iteratively; however, the properties of the sequence $r_1^o$, $r_2^o$, $...$  
are not clear from \rref{eq:ratio} -- \rref{eq:min}.   
For the asymptotic characteristics
of the optimal ratios $r_j^o$ $(j = 1, 2, \dots)$ and related quantities,
we derive the following series of Lemma~\ref{lemma:rate} -- \ref{lemma:asympt}.
\begin{lemma}
\label{lemma:rate}
The optimal ratios $r_j^o$ satisfies: 
\begin{eqnarray*}
&
r_j^o < r_{j+1}^o, \; \forall j > 0,
\\
&
r_j^o \rightarrow 1, \; j \rightarrow \infty.
\end{eqnarray*}
\end{lemma}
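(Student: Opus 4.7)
The plan is to prove both claims by analysing how the minimizer of $\psi(\cdot;\alpha)$ depends on the parameter $\alpha$, and then to pass to the limit using the resulting monotone bounded sequences.

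First I would establish that $\{\psi_j^{\min}\}$ is strictly decreasing. A direct inspection gives $\psi(1;\alpha)=\alpha$, and because every term except $\alpha r^5$ carries a factor of $(1-r)^k$ with $k\ge 2$, one computes
\[
\partial_r \psi(r;\alpha)\bigl|_{r=1} = 5\alpha > 0,
\]
so $\psi(\cdot;\alpha)$ is strictly increasing at $r=1$ whenever $\alpha>0$. Hence there exist $r<1$ with $\psi(r;\alpha)<\alpha$, giving $\min_{r\in[0,1]}\psi(r;\alpha)<\alpha$. Applied with $\alpha=\psi_{j-1}^{\min}$ this produces $\psi_j^{\min}<\psi_{j-1}^{\min}$, and the sequence is bounded below by $0$ because, via \rref{eq:vari0}, it is proportional to a variance.

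Next, I would treat the minimizer $r^o(\alpha)$ as a function of $\alpha$ and show it is strictly decreasing. Lemma~\ref{lemma:a1} guarantees the interior local minimum of $\psi(\cdot;\alpha)$ is unique for $\alpha>0$, so $r^o(\alpha)\in(0,1)$ is well defined; the first-order condition $\partial_r\psi(r^o;\alpha)=0$ together with $\partial^2_{r\alpha}\psi=5r^4>0$ and $\partial^2_{rr}\psi(r^o;\alpha)>0$ (strict positivity at the unique local minimum) lets the implicit function theorem give
\[
\frac{dr^o}{d\alpha} \;=\; -\frac{5(r^o)^4}{\partial^2_{rr}\psi(r^o;\alpha)} \;<\; 0.
\]
Since $\{\psi_{j-1}^{\min}\}$ is strictly decreasing, $r_{j+1}^o = r^o(\psi_j^{\min}) > r^o(\psi_{j-1}^{\min}) = r_j^o$, proving the first claim.

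For the limit, $\{r_j^o\}$ is increasing and bounded above by $1$, and $\{\psi_j^{\min}\}$ is decreasing and bounded below by $0$, so both converge to $r^*\le 1$ and $\psi^*\ge 0$. Continuity of $\psi$ in both variables gives $\psi^* = \psi(r^*;\psi^*) = \min_r \psi(r;\psi^*)$; if $\psi^*>0$ the first step would force $\min_r\psi(r;\psi^*)<\psi^*$, a contradiction, so $\psi^*=0$. A factorisation
\[
\psi(r;0) \;=\; (1-r)^3\Bigl[(27+126r+27r^2) + 5(1-r)^4(1+r)^{-2}\Bigr]
\]
is strictly positive on $[0,1)$ and vanishes only at $r=1$, so $\psi(r^*;0)=0$ forces $r^*=1$. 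The subtlest step is the implicit-function argument: I need the local minimizer to be globally unique in $(0,1)$ and to depend continuously on $\alpha$ along the entire range of $\alpha$ traversed by the iterates, both of which reduce to Lemma~\ref{lemma:a1} together with the strict positivity of $\partial^2_{rr}\psi$ at that unique critical point.
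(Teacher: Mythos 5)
Your overall strategy is the same as the paper's: show $\psi_j^{\min}$ is strictly decreasing, deduce that the minimizer moves to the right, then pass to the limit and identify the fixed point $\psi^*=0$, $r^*=1$. Your execution of the first and third steps is correct and in places more explicit than the paper's: the computation $\partial_r\psi(1;\alpha)=5\alpha>0$ is a clean way to get $\min_r\psi(r;\alpha)<\alpha$, and your factorization $\psi(r;0)=(1-r)^3\bigl[(27+126r+27r^2)+5(1-r)^4(1+r)^{-2}\bigr]$ checks out and pins down $r^*=1$ directly, where the paper only appeals to ``the proof of Lemma~\ref{lemma:a1}'' for the behavior of $\psi(\cdot;0)$.

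The one step where you genuinely diverge is the monotonicity of the minimizer, and your route has an unverified hypothesis. The implicit function theorem needs $\partial^2_{rr}\psi(r^o;\alpha)>0$, and this does not follow from Lemma~\ref{lemma:a1}: a unique interior local minimum can be degenerate (e.g.\ $r\mapsto r^4$ at the origin), so writing the strict positivity as a parenthetical consequence of uniqueness is not justified; you would have to check it separately for this particular degree-seven rational function. The paper's observation that $\psi(r;\alpha_1)-\psi(r;\alpha_2)=(\alpha_1-\alpha_2)r^5$ lets you avoid the issue entirely with a two-point comparison: if $\alpha_1>\alpha_2>0$ and $r_i$ minimizes $\psi(\cdot;\alpha_i)$, then adding the inequalities $\psi(r_1;\alpha_1)\le\psi(r_2;\alpha_1)$ and $\psi(r_2;\alpha_2)\le\psi(r_1;\alpha_2)$ gives $(\alpha_1-\alpha_2)(r_2^5-r_1^5)\ge 0$, hence $r_2\ge r_1$, and strictness follows from the uniqueness of the minimizer in Lemma~\ref{lemma:a1} (equality would force $r_1=r_2$ to minimize both objectives, which then differ only by the strictly increasing term $(\alpha_1-\alpha_2)r^5$, contradicting that the common minimizer is interior and that $\psi(\cdot;\alpha_1)$ still decreases somewhere past it). With that substitution your argument is complete and needs no smoothness or nondegeneracy beyond what the paper already assumes.
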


\begin{lemma}\label{lemma:rate2}
The width of the subsections $\S_j$ or $\I_j$ of \mbox{\rm \Qopt} satisfy: 
\begin{eqnarray*}
|\S_j| > |\S_{j+1}|, \; 
|\I_j| > |\I_{j+1}|, \; \forall j > 0,
\end{eqnarray*}
where $|\cdot|$ denotes the width of the subsection.
\end{lemma}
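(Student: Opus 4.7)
The plan is to translate the inequality on widths into an algebraic inequality on consecutive optimal ratios $r_j^o$, and then deduce that inequality from the recursive optimality condition \rref{eq:ratio}--\rref{eq:min} together with Lemma~\ref{lemma:rate}. Setting $r_0:=0$ to account for $d_0=0$, \rref{eq:rdef} gives $|\S_j|=d_j-d_{j-1}=(1-r_{j-1}^o)\,d_j$ and $|\S_{j+1}|=(1-r_j^o)\,d_{j+1}=(1-r_j^o)\,d_j/r_j^o$ for every $j\geq 1$, so
\[
|\S_j|>|\S_{j+1}|\quad\Longleftrightarrow\quad r_j^o(2-r_{j-1}^o)>1.
\]
Because $|\I_j|=|\S_j|/|\tilde\theta_1|$ by the scaling between the $y$- and $\reg$-axes, the same equivalence delivers $|\I_j|>|\I_{j+1}|$ simultaneously, so the whole lemma reduces to proving this single scalar inequality for every $j\geq 1$, with the negative indices then following by the symmetry of $q$.

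Next I would dispose of the base case $j=1$. With $r_0=0$ and $\psi_0^{\min}=32$ the inequality becomes $r_1^o>1/2$; I would verify this by differentiating \rref{eq:function0}, substituting $r=1/2$, and showing that $\partial_r\psi(1/2;32)<0$. Combined with the unimodality of $\psi(\,\cdot\,;\alpha)$ on $(0,1)$ (the uniqueness of the local minimizer flagged immediately after Theorem~\ref{th:mainspec} and made precise in Lemma~\ref{lemma:a1}), this forces the minimizer into $(1/2,1)$.

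For the inductive step $j\geq 2$, I would exploit the first-order condition $\partial_r\psi(r_j^o;\psi_{j-1}^{\min})=0$ and argue that $\psi(\,\cdot\,;\psi_{j-1}^{\min})$ is strictly decreasing on the whole interval $[0,\,1/(2-r_{j-1}^o)]$, so its unique interior minimizer must lie strictly to the right of this threshold; equivalently, it suffices to show $\partial_r\psi\!\left(1/(2-r_{j-1}^o);\,\psi_{j-1}^{\min}\right)<0$. This will be carried out by substituting $r=1/(2-r_{j-1}^o)$ into the derivative, then using the recursion $\psi_{j-1}^{\min}=\psi(r_{j-1}^o;\psi_{j-2}^{\min})$, Lemma~\ref{lemma:rate} (which gives $r_{j-1}^o>r_{j-2}^o$ and $r_j^o\to 1$), and the induction hypothesis $r_k^o(2-r_{k-1}^o)>1$ for $k<j$ to control $\psi_{j-1}^{\min}$ in terms of $r_{j-1}^o$.

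The main obstacle will be in that final step: controlling the sign of $\partial_r\psi$ at the threshold $r=1/(2-r_{j-1}^o)$ uniformly in $r_{j-1}^o\in(1/2,1)$. The resulting expression is a rather intricate rational function whose sign depends on both $r_{j-1}^o$ and $\psi_{j-1}^{\min}$, so sharp bounds on $\psi_{j-1}^{\min}$ must be propagated through the recursion \rref{eq:min}. If such bounds prove too cumbersome, a cleaner fallback is a direct exchange argument: assume for contradiction that $|\S_j|\leq|\S_{j+1}|$ for some $j\geq 1$ and construct an explicit perturbation of $d_j$ that strictly decreases the per-subsection cost $\V_{\I_j}[\tilde U^{\rm T}E]+\V_{\I_{j+1}}[\tilde U^{\rm T}E]$ while preserving the bias-free condition and the total subsection count, contradicting Proposition~\ref{th:mainprop}.
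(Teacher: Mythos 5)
Your reduction of the lemma to the scalar inequality $r_j^o(2-r_{j-1}^o)>1$ is correct, and the base case $r_1^o>1/2$ can indeed be checked from the sign of $\partial_r\psi(1/2;32)$. But the inductive step is not a technical loose end --- it is the entire content of the lemma, and as written it is open. Both $r_j^o$ and the threshold $1/(2-r_{j-1}^o)$ tend to $1$, so you must compare $1-r_j^o$ with $1-1/(2-r_{j-1}^o)\sim 1-r_{j-1}^o$ to within a multiplicative constant; using the asymptotics behind Lemma~\ref{lemma:asympt} ($1-r_j^o\sim(\psi_{j-1}^{\min}/108)^{1/2}$, $\psi_j^{\min}-\psi_{j-1}^{\min}\sim -5\cdot 3^{-5/2}(\psi_{j-1}^{\min})^{3/2}$) the inequality reduces in the limit to $5\cdot 3^{-5/2}/2>(1/108)^{1/2}$, which holds but only by a modest margin. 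Turning that limiting comparison into a statement valid for every $j$ requires uniform two-sided bounds on $\psi_{j-1}^{\min}$ in terms of $1-r_{j-1}^o$ propagated through the recursion \rref{eq:min}; neither Lemma~\ref{lemma:rate} nor Lemma~\ref{lemma:a1} supplies these, and your proposal does not indicate how to obtain them. So the primary route has a genuine gap.

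Your one-sentence ``fallback'' is in fact the paper's proof, and it avoids the induction entirely because it is purely local. Fix the neighbors $d_{j-1}$ and $d_{j+1}$ of the shared boundary $d:=d_j$, re-choose the representative values by the bias-free condition $\E_{\I_j}[\reg e(\reg)]=\E_{\I_{j+1}}[\reg e(\reg)]=0$ (the offsets $h$ are then determined by the boundaries), and use the additivity $\V[\tilde U^{\rm T}E]=\sum_j\V_{\I_j}[\cdot]$ so that only two terms depend on $d$. A direct computation gives the two-subsection cost
\begin{equation*}
Z(d)=A(d-d_{j-1})^5+B(d+d_{j-1})^2(d-d_{j-1})^3+A(d_{j+1}-d)^5+B(d_{j+1}+d)^2(d_{j+1}-d)^3,
\qquad A<0<B.
\end{equation*}
The quintic terms and the cubic factors are symmetric under reflection of $d$ about the midpoint of $[d_{j-1},d_{j+1}]$, while the positive weights satisfy $(d+d_{j-1})^2<(d_{j+1}+d)^2$; hence $Z$ is strictly smaller at the reflected point whenever $d$ is left of the midpoint, so its minimizer lies strictly to the right of the midpoint. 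At the optimum every interior breakpoint must minimize $Z$ given its neighbors, which is exactly $|\S_j|>|\S_{j+1}|$ (and $|\I_j|=|\S_j|/|\tilde\theta_1|$ carries it over to $\reg$). I would promote this to your main argument and drop the induction on $r_j^o(2-r_{j-1}^o)$.
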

The proofs of these lemmas are in Appendix~\ref{appendex:proofs}.  

Lemma~\ref{lemma:rate2} shows that the optimal quantization scheme
\Qopt\ has the property that it is coarse near the origin of
$y$ and becomes denser as $y$ tends to the boundaries of $[-\kappaY, \, \kappaY]$.
This property coincides with the results in Section~\ref{section:main} 
and it is also 
the dual result to that of the quantization
problem for stabilization by \cite{Elia:IEEE01} as mentioned in
Section~\ref{section:main}.  

Next, consider the unboundedness of $\prod_{j = 1}^\infty \frac{1}{r_j^o}$.
If it is bounded and
$\prod_{j = 1}^\infty \frac{1}{r_j^o} = \gamma < \infty$,
then this causes a contradiction as to the optimality of \Qopt, that is,
when a region $[-\gamma, \; \gamma]$ of $\tilde{\phi}_1$ is quantized,
the width of $\I_1$, for example, is never smaller than 1 even if the
number of quantization levels increases to infinity.
Of course, this is not true and $\prod_{j = 1}^\infty \frac{1}{r_j^o}$ is
therefore unbounded.
The next lemma strictly describes this fact.  
Refer to \cite{Tsumura:METR05-04} for the proof.
\begin{lemma}
\label{lemma:unbound}
The optimal ratios $r_j^o$ satisfies: 
\begin{equation*}
\prod_{j = 1}^\infty \frac{1}{r_j^o} = \infty
\end{equation*}
\end{lemma}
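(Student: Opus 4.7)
The plan is to argue by contradiction. Suppose $\prod_{j=1}^\infty 1/r_j^o = \gamma < \infty$; equivalently, $\prod_{j=1}^\infty r_j^o = 1/\gamma > 0$. From $d_j = r_j d_{j+1}$ in \rref{eq:rdef} together with the boundary condition $d_\M = \kappa_y$, the quantizer \Qopt\ with $\M$ subsections per side satisfies $d_1 = \kappa_y \prod_{i=1}^{\M-1} r_i^o \geq \kappa_y/\gamma > 0$ for every $\M$. Thus the innermost subsection $\I_1 = (0,\, d_1/\th1]$ has width bounded below uniformly in $\M$.

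Next I would use this to lower-bound the cost contribution from $\I_1 \cup \I_{-1}$ alone. Since $n=1$ gives $\sigma^2(\reg)=\reg^2$ and $f(\reg)=1/(2\kappa)$, the bias-free condition $\E_{\I_1}[\reg\, e(\reg)]=0$ forces $\qy_{\langle 1\rangle}=(2/3)d_1$, so $e(\reg)=(2d_1/3)-\reg\th1$ on $\I_1$. A direct one-dimensional integration together with the symmetry of the quantizer yields
\[
2N \int_0^{d_1/\th1} \reg^2 e^2(\reg)\, \frac{1}{2\kappa}\, d\reg \;=\; \frac{2N\, d_1^5}{135\,\kappa\,\th1^3} \;\geq\; \frac{2N\, \kappa^4\, \th1^2}{135\,\gamma^5} \;=:\; c,
\]
a strictly positive constant independent of $\M$ that lower-bounds $\V[U^{\rm T}E]$ achieved by \Qopt.

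To close the contradiction, I would compare with a competing uniform quantizer consisting of $\M$ equal cells of width $\kappa/\M$ per side on $\reg$, with $\qy_\j$ on each cell chosen so that $\E_{\I_j}[\reg\, e(\reg)]=0$ (which makes it an admissible competitor for Problem~\ref{prob:optimals}). Since $|e(\reg)| \leq \th1\,\kappa/\M$ on every cell,
\[
\V[U^{\rm T}E] \;\leq\; \frac{N}{2\kappa} \int_{-\kappa}^{\kappa} \reg^2 \left(\frac{\th1\,\kappa}{\M}\right)^{\!2}\! d\reg \;=\; \frac{N\, \th1^2\, \kappa^4}{3\,\M^2} \;\mathop{\longrightarrow}_{\M\to\infty}\; 0.
\]
For $\M$ sufficiently large the uniform cost drops below $c$, contradicting the optimality of \Qopt\ at the same $\M$; hence $\gamma=\infty$, which is the claim.

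The main obstacle is guaranteeing that the $\I_1$ lower bound is genuinely uniform in $\M$: it is the locality of the bias-free condition that decouples the contribution of $\I_1$ from the rest of the recursion in Proposition~\ref{th:mainprop}, while the hypothesis on $\prod r_j^o$ is precisely what pins $d_1$ away from zero. A secondary care is that the comparison quantizer must itself satisfy the per-cell bias-free constraint in order to be a legitimate competitor, which is arranged automatically by the choice of $\qy_\j$ above.
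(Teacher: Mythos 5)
Your argument is correct and follows the same route the paper sketches just before the lemma (the paper defers the detailed proof to the long version): a bounded product would pin $d_1$, hence the width of $\I_1$, away from zero uniformly in $\M$, which contradicts optimality once one exhibits an admissible quantizer whose cost vanishes as $\M\rightarrow\infty$. Your quantitative completion — the explicit $O(d_1^5)$ lower bound from $\I_1\cup\I_{-1}$ under the bias-free choice $\qyone = \tfrac{2}{3}d_1$, and the bias-corrected uniform competitor with cost $O(\M^{-2})$ — is sound, and correctly relies on the fact that for $n=1$ the recursion of Theorem~\ref{th:mainspec} makes the sequence $r_j^o$ independent of $\M$.
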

From Lemma~\ref{lemma:rate} to Lemma~\ref{lemma:unbound}, we know the
outline of the quantization of the region $[-\kappaY, \, \kappaY]$.

Next, 
to clarify the profile of 
$\V\left[U^{\rm T}E\right]$ 
with respect to $\M$, the following lemma confirms the
asymptotic characteristics of $\psi_\M^{\min}$.
\begin{lemma}
\label{lemma:asympt}
The minimized quantity $\psi_j^{\min}$ of \rref{eq:function0} at $j=\M$ converges as 
\[
\psi_\M^{\min}
\rightarrow
\Psi_{a}^{b}(\M),
\; \M \rightarrow \infty,
\]
where $a = -5 \cdot 3^{-\frac{5}{2}}$ and $b = \frac{3}{2}$,
and $\Psi_a^b(m)$ is a function of integer $m$ defined as the solution of the
following recurrence formula with an appropriate initial number
$\psi(0) =  \psi_o$:
\begin{equation}
\hat{\psi}(m) - \hat{\psi}(m-1)
 = 
a \hat{\psi}^b(m-1).
\label{eq:diff100}
\end{equation}
\end{lemma}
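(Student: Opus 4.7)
The plan is to reduce the statement to an asymptotic expansion of the one-step map $\alpha \mapsto \psi^{*}(\alpha):=\min_{r\in[0,1]}\psi(r;\alpha)$ for small $\alpha$, to identify the leading nontrivial coefficient as exactly the pair $(a,b)$, and then to compare the resulting perturbed recurrence on $\psi_{j}^{\min}$ with the clean recurrence \rref{eq:diff100} defining $\Psi_{a}^{b}$.

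First I would verify that $\psi_{j}^{\min}\downarrow 0$. Since $\psi(1;\alpha)=\alpha$, we always have $\psi_{j}^{\min}\leq\psi_{j-1}^{\min}$, and the inequality is strict whenever $\psi^{*}(\alpha)<\alpha$, which holds for every $\alpha>0$ because shifting $r$ slightly below $1$ picks up a cubic decrement in $s=1-r$ (as the next step makes explicit). The sequence therefore converges to some $L\geq 0$; continuity of $\psi^{*}$ forces $L=\psi^{*}(L)$, and since $\psi^{*}(\alpha)<\alpha$ for every $\alpha>0$, we must have $L=0$. This also aligns with Lemma~\ref{lemma:rate}, which already gives $r_{j}^{o}\to 1$.

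Next I would carry out a local expansion near $r=1$. Writing $s:=1-r$ and Taylor-expanding the four pieces of $\psi$ in \rref{eq:function0} yields
\begin{equation*}
\psi(1-s;\alpha)=\alpha-5\alpha s+10\alpha s^{2}+180\,s^{3}+O(s^{4})+O(\alpha s^{3}).
\end{equation*}
The stationarity condition $-5\alpha+20\alpha s+540\,s^{2}+\cdots=0$ has the unique small root $s^{o}(\alpha)=\sqrt{\alpha/108}\,(1+o(1))$. Substituting back and collecting terms gives
\begin{equation*}
\psi^{*}(\alpha)=\alpha+\Bigl(-\frac{5}{\sqrt{108}}+\frac{180}{108^{3/2}}\Bigr)\alpha^{3/2}+o(\alpha^{3/2})=\alpha-5\cdot 3^{-5/2}\,\alpha^{3/2}+o(\alpha^{3/2}),
\end{equation*}
which exposes $a=-5\cdot 3^{-5/2}$ and $b=3/2$. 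Because $\psi_{j}^{\min}=\psi^{*}(\psi_{j-1}^{\min})$, this translates into the perturbed recurrence $\psi_{j}^{\min}-\psi_{j-1}^{\min}=a(\psi_{j-1}^{\min})^{b}+o((\psi_{j-1}^{\min})^{b})$, i.e.\ \rref{eq:diff100} up to a subleading error.

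Finally I would compare this perturbed recurrence with the clean one defining $\Psi_{a}^{b}$. The continuous counterpart $\dot{\psi}=a\psi^{b}$ with $b=3/2$ admits the explicit family $\psi(t)=4/(at+C)^{2}$, all of which are asymptotically equivalent as $t\to\infty$ independently of $C$. Introducing $z_{j}:=(\psi_{j}^{\min})^{-1/2}$ and applying a first-order Taylor expansion to the above recurrence gives $z_{j}-z_{j-1}=-\frac{a}{2}+\varepsilon_{j}$ with $\varepsilon_{j}\to 0$, hence $z_{j}=-\frac{a}{2}j+o(j)$ by telescoping; an identical calculation on the sequence defined by \rref{eq:diff100} delivers the same leading behaviour. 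I expect the real obstacle to be exactly this last transfer, since one must push the single-step $o(1)$ correction through infinitely many iterations without losing the leading term. The cleanest route is to control the accumulated drift on $z_{j}$ by a Cesaro-type estimate on $\sum_{k\leq j}\varepsilon_{k}$, which is $o(j)$ because $\varepsilon_{k}\to 0$, and this is enough to conclude $\psi_{\M}^{\min}/\Psi_{a}^{b}(\M)\to 1$ as $\M\to\infty$.
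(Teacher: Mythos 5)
Your proposal is correct and follows essentially the same route as the paper's proof: Taylor expansion of $\psi(r;\alpha)$ about $r=1$, solving the stationarity condition to get $\epsilon_j=\sqrt{\psi_{j-1}^{\min}/108}\,(1+o(1))$, and substituting back to obtain $\psi_j^{\min}-\psi_{j-1}^{\min}=-5\cdot 3^{-5/2}(\psi_{j-1}^{\min})^{3/2}+o((\psi_{j-1}^{\min})^{3/2})$, which identifies $a$ and $b$. Your final step --- passing to $z_j=(\psi_j^{\min})^{-1/2}$ and controlling the accumulated drift by a Ces\`aro estimate --- only makes explicit the comparison with \rref{eq:diff100} that the paper compresses into its closing sentence, so it is a welcome sharpening rather than a different argument.
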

The proof is in Appendix~\ref{appendex:proofs}.

Note that the recurrence formula \rref{eq:diff100} is from
\rref{eq:symmain} in Appendix~\ref{appendex:proofs}, and 
it can be approximated
by $\tilde{\psi}$, which is a solution of 
a differential equation:
\[
\frac{d\tilde{\psi}(m)}{dm}
 =  (a + \nu) \tilde{\psi}^b(m) \geq
a\tilde{\psi}^b(m) + o(\tilde{\psi}^b(m))
 = a\tilde{\psi}^b(m) + O(\tilde{\psi}^2(m)) = {\cal
 P}(\tilde{\psi}(m)), \; m \in {\cal R}
,
\]
where ${\cal P}(\bullet)$ is defined in \rref{eq:symmain} and 
$\nu > 0$ is an appropriate constant number satisfying $a+\nu < 0$ and
the above inequality (such $\nu$ always exists).  
We can show $\tilde{\psi}(m) \geq \hat{\psi}(m)$ at sufficiently large
integer $m$ when $\tilde{\psi}(0) \geq \hat{\psi}(0)$ in
Lemma~\ref{lemma:recurrentdiff}.
Then, we obtain the solution
\begin{equation}
\tilde{\psi}(m)  = 
\left\{
(-b+1)(a+\nu)m
+B
\right\}^{\frac{1}{-b+1}}
\label{eq:approxconvex}
\end{equation}
for an appropriate constant $B$.  
From \rref{eq:vari0} and \rref{eq:approxconvex}, 
we obtain
\begin{eqnarray}
\min_q 
\V\left[U^{\rm T}E\right]
&\leq&
\frac{N}{2160}
\kappa^4 ((-3/2+1)((-5 \cdot 3^{-\frac{5}{2}}+\nu) (\M-1)+B))^{\frac{1}{-3/2+1}}
\nonumber
\\
& = &
A \kappa^4 (\M-B^\star)^{-2}
\nonumber
\\
A&: = &
\frac{N}{540}
\left(
5 \cdot 3^{-\frac{5}{2}}
-\nu
\right)^{-2}, \;\; 
B^\star: = (5 \cdot 3^{-\frac{5}{2}}-\nu)^{-1}B.
\label{eq:variapp}
\end{eqnarray}
This \rref{eq:variapp} approximately shows the relationship between the optimized
quantization error 
$\min_q \V\left[U^{\rm T}E\right]$
and the number of quantization levels.

\begin{example}
\rm

Consider the following first-order FIR model for verifying the above results:
\begin{equation}
y(\i) = \theta u(\i),
\label{eq:1stFIR}
\end{equation}
where $\theta = 2$ and the system is noise free.
We also generate 50 sets of I/O data sequences with a length $N=10,000$ for the system 
\rref{eq:1stFIR} that obey Assumption~\ref{assumption:dist} and
$\kappa = 4$ (i.e., $\kappaY = 8$). 
%
%
%

Next, quantize the output data $y$ with the optimal quantizers given by
Theorem~\ref{th:mainspec} 
and with uniform quantizers, for comparison, subject to the constraints $\M = 5$
($\Mo = 10$).
Fig \ref{fig:optm55} shows the step function $q$ for $y$ of the optimal
quantizer for $\M = 5$. 
From the comparison with Fig \ref{fig:optm5},
Fig \ref{fig:optm55} more clearly shows the property of the optimal
quantizer, that is, it is coarse near the origin and becomes denser away from the origin. 


Table~\ref{table:m55} shows comparison of 
the bias term $\frac{1}{N}\sum_{\i = 1}^{N} \reg(\i) e(\i)$
and the quantization error term $\Delta {E}$.
From Table~\ref{table:m55}, the optimal quantizer, which minimize
$\V[U^{\rm T}E]$ attains a lower $\|\Delta {E}\|_2^2$ than those of the
uniform quantizer.

\vspace{1cm}

\centerline{
\begin{minipage}[c]{8cm}
\centerline{
\epsfxsize = 150pt \epsfbox{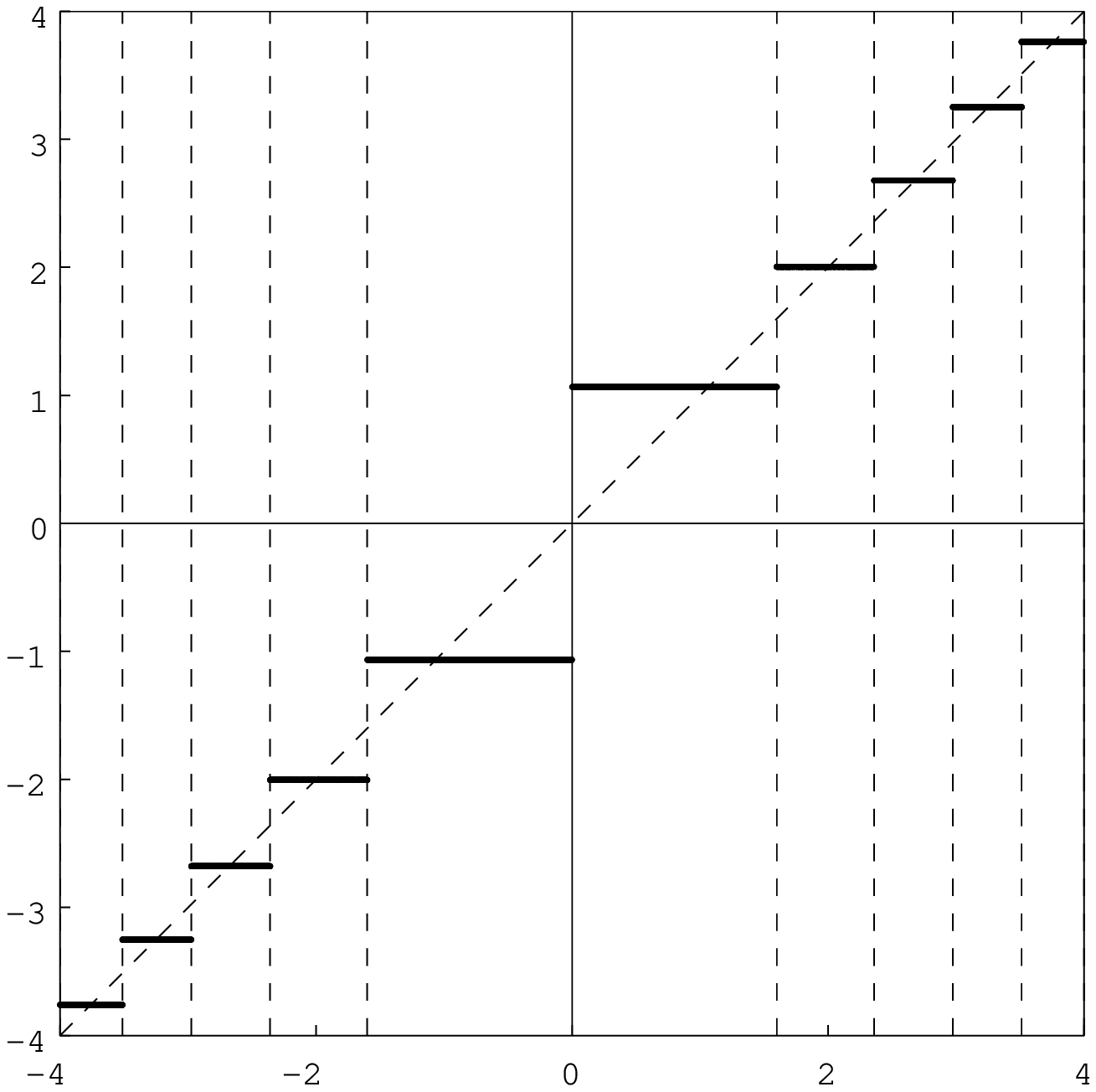}{
\tput(100,50,{$y$})
\tput(50,102,{$q(y)$})
}
}
\refstepcounter{figure}\label{fig:optm55}
\vspace{6mm}
\centerline{Fig.\,{\thefigure} \hspace{2mm} Optimal quantization scheme \mbox{\Qopt} for $\M = 5$}
\end{minipage}
}


\vspace{.7cm}

\begin{minipage}[t]{16cm}
\refstepcounter{table}\label{table:m55}
\centerline{Table.\,{\thetable} \hspace{2mm}
The ratios of the biases and the squares of errors for $\M = 5$ (averages of 50 sets)
}
\vspace{.3cm}
\centerline{
\begin{tabular}{|c|c|}
\hline
\vbox to 12pt{}$|$ $\sum_{t=1}^{10000}\tilde{\phi}_1(t) e(t)|$ by \mbox{\Qopt} /
 $|$ $\sum_{t=1}^{10000}\tilde{\phi}_1(t) e(t)|$ by unif. quant. & 0.0135 \\
\hline
\vbox to 12pt{}$\|\Delta E\|_2^2$ by \mbox{\Qopt} /
 $\|\Delta E\|_2^2$ by unif. quant. & 0.0116 \\
\hline
\end{tabular}
}
\end{minipage}

\hfill{$\diamondsuit$}
\end{example}

\vspace{1cm}




\section{Resolution of the Quantizer and I/O Data Length}\label{sec:eval}

In the system identification of \rref{eq:sys}, it is important to
clarify the relationship between the estimation error and the amount of
signal data used for the estimation.  
The amount of signal data is the resolution of the quantization 
multiplied by the length of signal sequence.  
Using the results in the previous sections, we evaluate the magnitudes
of the error term $\Delta \tilde{E}$ 
and $\Delta \tilde{W}$ based on the approach in
\cite{Tsumura:CDC99} and compare the effects of the resolution of
quantizers and the length of signal sequence.

First, the evaluation of the magnitude
of $(\tilde{U}^{\rm T}\tilde{U})^{-1}$.
\begin{lemma}
\label{lemma:uinv}{\rm \cite{Tsumura:CDC99}}
Assume that 
$\tphi$
satisfies Assumption~\ref{assumption:0} and \ref{assumption:smooth}
with
$\V[\reg(\i)] = \sigma_{\reg}^2$, $\V[\reg^2(\i)] =  \eta$.
Then, for any reliability index $\beta_1 >0$, where  $1-\beta_1 > 0$, and
$\sigma_{\reg}^2 N-
n\sqrt{\frac{N}{\beta_1}}
\left(
\sqrt{\eta} + (n-1)\sigma_{\reg}^2
\right)>0$,
the following inequality is satisfied.
\begin{eqnarray}
&&
{\mbox{\rm{Prob}}}
\left(
\|(\tilde{U}^{\rm T}\tilde{U})^{-1}\|_1
\geq
\epsilon_1
\right)
\le \beta_1
\nonumber
\\
&&
\epsilon_1: = 
\frac{1}{\sigma_{\tilde{u}}^2 N-
n\sqrt{\frac{N}{\beta_1}}
\left(
\sqrt{\eta} + (n-1)\sigma_{\reg}^2
\right)
}
\label{eq:1epsilon1}
\end{eqnarray}
\end{lemma}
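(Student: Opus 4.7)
The plan is to write $\tilde{U}^{\rm T}\tilde{U}$ as its mean plus a fluctuation matrix, control the fluctuations entrywise via Chebyshev's inequality, take a union bound to get a high-probability bound on a matrix norm of the fluctuation, and then apply a standard Neumann-series perturbation bound to convert this into a bound on $\|(\tilde{U}^{\rm T}\tilde{U})^{-1}\|_1$.

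Concretely, the first step is to identify the mean. Using Assumption~\ref{assumption:0}.1 (mutual independence of the $u(t)$) together with orthogonality of $T$, the entries of $\phi(t)^{\rm T}\phi(t)$ have mean $\sigma_u^2$ on the diagonal and $0$ off the diagonal, so the same holds after conjugation by $T$ for $\tilde{\phi}(t)^{\rm T}\tilde{\phi}(t)$. Summing over $t$ gives $\E[\tilde{U}^{\rm T}\tilde{U}] = N\sigma_{\reg}^2 I$ (with $\sigma_{\reg}^2=\sigma_{\tilde u}^2$). Write
\[
\tilde{U}^{\rm T}\tilde{U} = N\sigma_{\reg}^2 I + \Delta, \qquad \Delta_{ii} = \sum_{t=1}^N\bigl(\tilde{\phi}_i(t)^2-\sigma_{\reg}^2\bigr),\quad \Delta_{ij} = \sum_{t=1}^N\tilde{\phi}_i(t)\tilde{\phi}_j(t)\ (i\ne j).
\]

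The second step is entrywise Chebyshev. Each $\Delta_{ii}$ is a sum of $N$ (weakly correlated, due to the overlapping-window structure of $\phi$) mean-zero terms whose second moment is bounded by $\V[\reg^2]=\eta$, so $\mathrm{Var}(\Delta_{ii}) \le N\eta$ up to the cross-time covariance contributions, which can be absorbed into the same scaling. Each off-diagonal $\Delta_{ij}$ is a sum of mean-zero terms $\tilde{\phi}_i(t)\tilde{\phi}_j(t)$ whose second moment is bounded by $\sigma_{\reg}^4$ (Cauchy--Schwarz together with the marginal variance $\sigma_{\reg}^2$). Thus Chebyshev gives, for each entry,
\[
\mathrm{Prob}\bigl(|\Delta_{ii}| \ge \tfrac{\sqrt{N\eta}}{\sqrt{\beta_1/n}}\bigr)\le \beta_1/n,\qquad \mathrm{Prob}\bigl(|\Delta_{ij}|\ge \tfrac{\sqrt{N}\,\sigma_{\reg}^2}{\sqrt{\beta_1/n}}\bigr)\le \beta_1/n,
\]
where the probability budget has been distributed across the $n$ columns so that a union bound yields, with probability at least $1-\beta_1$,
\[
\|\Delta\|_1 \;=\; \max_j\sum_i|\Delta_{ij}| \;\le\; \sqrt{n/\beta_1}\Bigl(\sqrt{N\eta} + (n-1)\sqrt{N}\,\sigma_{\reg}^2\Bigr) \;=\; n\sqrt{N/\beta_1}\bigl(\sqrt{\eta}+(n-1)\sigma_{\reg}^2\bigr)/\sqrt n.
\]
(The precise distribution of the probability budget across entries is what produces the exact constant $n$ in the stated bound; allocating $\beta_1/n^2$ per entry would yield this.)

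The third and final step is matrix perturbation. Under the hypothesis that the denominator in \rref{eq:1epsilon1} is positive, we have $\|\Delta\|_1 < N\sigma_{\reg}^2 = \|(N\sigma_{\reg}^2 I)\|_1$, so by the Neumann series expansion
\[
\|(\tilde{U}^{\rm T}\tilde{U})^{-1}\|_1 \;=\; \bigl\|(N\sigma_{\reg}^2 I + \Delta)^{-1}\bigr\|_1 \;\le\; \frac{1/(N\sigma_{\reg}^2)}{1-\|\Delta\|_1/(N\sigma_{\reg}^2)} \;=\; \frac{1}{N\sigma_{\reg}^2 - \|\Delta\|_1}.
\]
Substituting the high-probability bound on $\|\Delta\|_1$ yields the event $\{\|(\tilde{U}^{\rm T}\tilde{U})^{-1}\|_1\ge\epsilon_1\}$ on the complement of a probability-$\beta_1$ event, giving the stated inequality.

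The main obstacle is the variance computation for the off-diagonal fluctuations, since $\tilde{\phi}_i(t)$ and $\tilde{\phi}_j(t)$ at the same time $t$ are correlated (both depend on the same window of $u$'s), and the $\phi(t),\phi(t+1)$ windows overlap across time. The orthogonality of $T$ kills the expectation cleanly but not the higher moments, so some care is needed to verify that the second moment of each $\Delta_{ij}$ really scales like $N\sigma_{\reg}^4$; this is where citing the earlier analysis of \cite{Tsumura:CDC99} is used to skip the routine but lengthy verification and obtain exactly the constants appearing in \rref{eq:1epsilon1}.
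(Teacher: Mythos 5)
The paper does not prove this lemma: it is imported verbatim from \cite{Tsumura:CDC99}, so there is no in-paper proof to compare you against. Your architecture --- centering $\tilde{U}^{\rm T}\tilde{U}$ at $N\sigma_{\reg}^2 I$, entrywise Chebyshev with budget $\beta_1/n^2$ per entry, a union bound over the column sums, and the Neumann-series bound $\|(N\sigma_{\reg}^2 I+\Delta)^{-1}\|_1\le (N\sigma_{\reg}^2-\|\Delta\|_1)^{-1}$ --- is the natural route, and it does reverse-engineer the exact constants in \rref{eq:1epsilon1}, so the skeleton is almost certainly the intended one.

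The gap sits in the second-moment estimates, which is exactly the step you defer to the cited reference. First, Cauchy--Schwarz applied to $\E[\tilde{\phi}_i^2\tilde{\phi}_j^2]$ yields the bound $(\E[\tilde{\phi}_i^4]\,\E[\tilde{\phi}_j^4])^{1/2}=\eta$, not $\sigma_{\reg}^4$; the value $\sigma_{\reg}^4$ would follow from independence of $\tilde{\phi}_i$ and $\tilde{\phi}_j$, but after the rotation by $T$ these are merely uncorrelated linear combinations of the same window of inputs, and $\E[\tilde{\phi}_i^2\tilde{\phi}_j^2]$ can strictly exceed $\sigma_{\reg}^4$ for non-Gaussian $u$ (take $\tilde{\phi}_i=(u_1+u_2)/\sqrt{2}$, $\tilde{\phi}_j=(u_1-u_2)/\sqrt{2}$ with $u$ of large fourth moment). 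Second, the cross-time covariances cannot be ``absorbed into the same scaling'': in the rotated coordinates $\tilde{\phi}_i(\i)$ and $\tilde{\phi}_i(\i+k)$ share inputs for $|k|<n$, so the variance of a diagonal entry of $\Delta$ has $O(nN)$ nonzero covariance terms rather than $N$. Both defects disappear if you work with $U^{\rm T}U$ in the original coordinates, where each off-diagonal summand $u(\i-i+1)u(\i-j+1)$ is a product of independent mean-zero variables with second moment exactly $\sigma_u^4$ and the summands are uncorrelated across $\i$; but the induced $1$-norm is not invariant under conjugation by the orthogonal $T$, so transferring that bound back to $(\tilde{U}^{\rm T}\tilde{U})^{-1}$ costs a dimension-dependent factor you have not accounted for. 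As written, the proposal is a plausible reconstruction of the shape of the result rather than a proof of it.
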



Using Lemma \ref{lemma:uinv}, we evaluate 
$\|\Delta \tilde{E}\|_\infty$ in the following theorem.
\begin{theorem}
\label{lemms:quantterm}
For the system \rref{eq:sys} with the optimal quantizer  
$q(y)$ defined by \rref{eq:quant10} -- \rref{eq:dd},
\rref{eq:opthigh}, 
assume Assumption~\ref{assumption:0} and \ref{assumption:smooth}.  
Then, for the reliability indices $\beta_1$, $\beta_2 > 0$, a length of
data $N$ and the number of quantization levels $M$, 
where $1-\beta_1-\beta_2  > 0$, 
and 
$\sigma_{\reg}^2 N- n\sqrt{\frac{N}{\beta_1}}
\left(
\sqrt{\eta} + (n-1)\sigma_{\reg}^2
\right)
> 0$, 
the following inequality asymptotically holds at $\Delta y \rightarrow 0$:
\begin{eqnarray}
&&
{\mbox{\rm{Prob}}}\left(
\|\Delta \tilde{E}\|_\infty \leq
\epsilon_1\epsilon_2
\right)
\geq 1-\beta_1-\beta_2
\label{eq:theoremnormal}
\\
&&
\epsilon_1: = 
\frac{1}
{\sigma_{\reg}^2 N-
n\sqrt{\frac{N}{\beta_1}}
\left(
\sqrt{\eta} + (n-1)\sigma_{\reg}^2
\right)
},
\;
\epsilon_2 : = 
\frac{1}{M}
\sqrt{\frac{1}{12}\tilde{\theta}_1^2 D^3}
\sqrt{\frac{nN}{\beta_2}}.
\label{eq:1epsilon2}
\end{eqnarray}
\end{theorem}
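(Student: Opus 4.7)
The plan is to combine the matrix inequality
\[
\|\Delta \tilde{E}\|_\infty \;\leq\; \|(\tilde{U}^{\rm T}\tilde{U})^{-1}\|_1 \cdot \|\tilde{U}^{\rm T}E\|_\infty
\]
(valid for symmetric $(\tilde U^{\rm T}\tilde U)^{-1}$, since $\|A\|_\infty=\|A^{\rm T}\|_1$) with two independent probabilistic bounds, one for each factor, and conclude by a union bound on the complement events so that the total failure probability does not exceed $\beta_1+\beta_2$.

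First I would control the matrix factor: under Assumption~\ref{assumption:0} and \ref{assumption:smooth} and the positivity condition on $N$, Lemma~\ref{lemma:uinv} directly yields
\[
\mathrm{Prob}\bigl(\|(\tilde{U}^{\rm T}\tilde{U})^{-1}\|_1 \geq \epsilon_1\bigr)\leq \beta_1,
\]
with $\epsilon_1$ as defined in the statement. This is plug-and-play; no new estimate is required.

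Next I would control $\|\tilde U^{\rm T}E\|_\infty$ via Chebyshev on each coordinate. Writing $X_k:=\sum_{t=1}^N \tilde\phi_k(t)e(\reg(t))$, the bias-free conditions \rref{eq:biaszero}--\rref{eq:bias2} (which are asymptotically ensured by setting $\qy_{\j}$ at the centroid of each subsection) give $\E[X_k]=0$, so $\V[X_k]=\E[X_k^2]$. Lemma~\ref{lemma:seisai} yields $\E[X_k^2]\sim N\,\E[\tilde\phi_k^2 e^2(\reg)]$ as $\Delta y_{\max}\to 0$, so summing over $k$ and recognising the integral \rref{eq:vari1001} recovers the total variance; evaluated at the optimal $g_{\rm f}$ of Theorem~\ref{theorem:fixed-rate} this equals $\frac{N}{12}\tilde\theta_1^2 D^3 M^{-2}$. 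Using the trivial bound $\V[X_k]\leq \V[\tilde{U}^{\rm T}E]$ together with a union bound over the $n$ coordinates,
\[
\mathrm{Prob}\bigl(\|\tilde{U}^{\rm T}E\|_\infty \geq c\bigr) \leq \sum_{k=1}^n \frac{\V[X_k]}{c^2} \leq \frac{n\,\V[\tilde U^{\rm T}E]}{c^2}\sim \frac{n\,N\,\tilde\theta_1^2 D^3}{12\,M^2\,c^2}.
\]
Setting the right-hand side equal to $\beta_2$ and solving for $c$ produces exactly $\epsilon_2$ as in \rref{eq:1epsilon2}; this delivers the $\sqrt{n}$ factor that distinguishes this bound from the raw $\ell_2$ estimate.

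Finally I would splice the two bounds: on the intersection of the good events (total probability at least $1-\beta_1-\beta_2$ by a union bound on the complements) the matrix inequality gives $\|\Delta\tilde E\|_\infty\leq \epsilon_1\epsilon_2$, yielding \rref{eq:theoremnormal}. The only genuinely delicate step is the passage from the asymptotic identity $\V[\tilde U^{\rm T}E]\sim \frac{N}{12}\tilde\theta_1^2 D^3 M^{-2}$ (which is an $\Delta y\to 0$ limit from Lemma~\ref{lemma:seisai} and Theorem~\ref{theorem:fixed-rate}) to a Chebyshev-style bound that is uniform enough for the probability estimate; this is why the statement is phrased as an asymptotic inequality at $\Delta y\to 0$, and handling the $o(1)$ remainder carefully is the main technical obstacle. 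Everything else — the norm inequality, Lemma~\ref{lemma:uinv}, and the union bound — is routine.
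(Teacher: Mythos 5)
Your proposal is correct and follows essentially the same route as the paper's proof: the norm inequality $\|\Delta\tilde E\|_\infty\leq\|(\tilde U^{\rm T}\tilde U)^{-1}\|_1\|\tilde U^{\rm T}E\|_\infty$, Lemma~\ref{lemma:uinv} for the first factor, Chebyshev's inequality with the variance $\frac{1}{12}\tilde\theta_1^2D^3M^{-2}N$ from \rref{eq:minv} for the second, and a union bound on the complements. Your per-coordinate Chebyshev with the crude bound $\V[X_k]\leq\V[\tilde U^{\rm T}E]$ is exactly how the $\sqrt{n/\beta_2}$ factor in $\epsilon_2$ arises in the paper's version as well.
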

The proof is in Appendix~\ref{appendex:proofs}.  

From this theorem, we know that the convergence rate of the error
term 
$\|\Delta \tilde{E}\|_\infty$ 
has an order of $M^{-1}$ for sufficiently large $M$ and
of $N^{-\frac{1}{2}}$.
Approximately, the total amount of information in the quantized
output transmitted from identified systems to the observers is approximately
$N\log_2 M  = : {\cal K}$ using binary coding. Therefore, subject to a constraint of such a
total amount of information, it is known that a large $M$ is preferable
to a large $N$ to reduce the estimation error by observing: 
\[
M^{-1}N^{-\frac{1}{2}} = M^{-1}\left(\frac{\cal K}{\log_2 M}\right)^{-\frac{1}{2}}
 = 
{\cal K}^{-\frac{1}{2}} M^{-1} \left(\log_2 M\right)^{\frac{1}{2}}
\;\mathop{\longrightarrow}^{M\rightarrow \infty}\;
0.
\]
Of course, this is valid only for the error term 
$\|\Delta \tilde{E}\|_\infty$
and the situation is different for the noise error term $\Delta W$.
We introduce the result for $\Delta \tilde{W}$ in the following proposition.
\begin{proposition}
\label{lemms:noiseterm}{\rm \cite{Tsumura:CDC99}}
Assume that $\tphi$ satisfies Assumption~\ref{assumption:0} and \ref{assumption:smooth}
and $w(\i)$ is i.i.d. random variable with
${\mbox{\rm V}}[\reg(\i)] =  \sigma_{\reg}^2$,
and ${\mbox{\rm V}}[w(\i)] =  \sigma_w^2$, respectively.
Then, for reliability indices $\beta_1$, $\beta_2 > 0$, and a length of
data $N$, where $1-\beta_1-\beta_2  > 0$, and
\break
$\sigma_{\reg}^2 N-
n\sqrt{\frac{N}{\beta_1}}
\left(
\sqrt{\eta} + (n-1)\sigma_{\reg}^2
\right)
> 0$,
the following inequality is satisfied.
\begin{eqnarray}
&&
{\mbox{\rm{Prob}}}
\left(
\|\Delta \tilde{W}\|_\infty \leq
\epsilon_1\epsilon_2
\right)
\geq 1-\beta_1-\beta_2
\label{eq:theoremnormal2}
\\
&&
\epsilon_1: = 
\frac{1}
{\sigma_{\reg}^2 N-
n\sqrt{\frac{N}{\beta_1}}
\left(
\sqrt{\eta} + (n-1)\sigma_{\reg}^2
\right)
},
\;
\epsilon_2 : =  \sigma_{\reg}\sigma_w\sqrt{\frac{nN}{\beta_2}}
\label{eq:1epsilon3}
\end{eqnarray}
\end{proposition}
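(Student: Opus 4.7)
The plan is to decompose $\|\Delta \tilde{W}\|_\infty$ into a product of two factors, bound each in probability, and combine the two events by a union bound. Writing $\Delta \tilde{W} = (\tilde{U}^{\rm T}\tilde{U})^{-1}\tilde{U}^{\rm T}W$, the first factor will be controlled directly by Lemma~\ref{lemma:uinv}, giving $\|(\tilde{U}^{\rm T}\tilde{U})^{-1}\|_1 \le \epsilon_1$ with probability at least $1-\beta_1$ under exactly the nondegeneracy hypothesis stated in the proposition. The structure here mirrors the proof of Theorem~\ref{lemms:quantterm} with $\tilde{U}^{\rm T}E$ replaced by $\tilde{U}^{\rm T}W$, so the argument proceeds in parallel.

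Next, I would control $\tilde{U}^{\rm T}W$ by a second-moment computation plus Markov's inequality. Using independence of $u$ and $w$, the zero mean of $w$, and $\V[w(\i)] = \sigma_w^2$, a direct calculation gives
\begin{equation*}
\E\bigl[\|\tilde{U}^{\rm T}W\|_2^2\bigr]
= \sum_{k=1}^n \E\!\left[\Bigl(\sum_{\i=1}^N \tilde{\phi}_k(\i)w(\i)\Bigr)^{\!2}\right]
= n N \sigma_{\reg}^2\sigma_w^2,
\end{equation*}
since $T$ is orthogonal and so each transformed coordinate $\tilde{\phi}_k(\i)$ has variance $\sigma_{\reg}^2$. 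Markov's inequality applied to $\|\tilde{U}^{\rm T}W\|_2^2$ at level $nN\sigma_{\reg}^2\sigma_w^2/\beta_2$ then yields $\|\tilde{U}^{\rm T}W\|_2 \le \sigma_{\reg}\sigma_w\sqrt{nN/\beta_2} = \epsilon_2$ with probability at least $1-\beta_2$.

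Finally, on the intersection of these two events, whose probability is at least $1-\beta_1-\beta_2$ by the union bound, I would chain the standard norm inequalities
\begin{equation*}
\|\Delta \tilde{W}\|_\infty \le \|\Delta \tilde{W}\|_2 \le \|(\tilde{U}^{\rm T}\tilde{U})^{-1}\|_2\,\|\tilde{U}^{\rm T}W\|_2 \le \|(\tilde{U}^{\rm T}\tilde{U})^{-1}\|_1\,\|\tilde{U}^{\rm T}W\|_2,
\end{equation*}
where the last step uses symmetry of $(\tilde{U}^{\rm T}\tilde{U})^{-1}$, which gives $\|(\tilde{U}^{\rm T}\tilde{U})^{-1}\|_1 = \|(\tilde{U}^{\rm T}\tilde{U})^{-1}\|_\infty$ and hence $\|(\tilde{U}^{\rm T}\tilde{U})^{-1}\|_2 \le \|(\tilde{U}^{\rm T}\tilde{U})^{-1}\|_1$. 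This produces $\|\Delta \tilde{W}\|_\infty \le \epsilon_1 \epsilon_2$, as required. The only real obstacle is the bookkeeping in the norm chain — one has to be careful to land exactly at the constants in $\epsilon_1$ and $\epsilon_2$ without picking up stray dimensional factors, and the symmetry of $\tilde{U}^{\rm T}\tilde{U}$ is what makes this clean. The rest is routine, which is consistent with the result being attributed to the earlier reference \cite{Tsumura:CDC99}.
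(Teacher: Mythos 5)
Your proposal is correct and follows essentially the same route the paper uses for the companion result, Theorem~\ref{lemms:quantterm}: Lemma~\ref{lemma:uinv} for the $\epsilon_1$ factor, a second-moment/Chebyshev--Markov bound for the $\epsilon_2$ factor, a submultiplicative norm inequality exploiting the symmetry of $(\tilde{U}^{\rm T}\tilde{U})^{-1}$, and a union bound; the paper itself does not reprove Proposition~\ref{lemms:noiseterm}, deferring to \cite{Tsumura:CDC99}. Your only cosmetic deviation is bounding $\|\tilde{U}^{\rm T}W\|_2$ rather than $\|\tilde{U}^{\rm T}W\|_\infty$ and using $\|A\|_2\le\|A\|_1$ for symmetric $A$ in place of $\|Ax\|_\infty\le\|A\|_1\|x\|_\infty$, which lands on the same constants.
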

This result shows that a large $N$ is preferable for reducing $\Delta \tilde{W}$.  
By combining Theorem~\ref{lemms:quantterm} and
Proposition~\ref{lemms:noiseterm}, it can be seen that there exists
a trade-off between $\Delta \tilde{E}$ and $\Delta \tilde{W}$ 
(also $\Delta {E}$ and $\Delta {W}$)
for reducing the total
identification error subject to the constraint on the amount of information
transmitted from the identified systems to the estimators.


\section{Conclusion}

In this paper, we show that the optimal quantizers for system
identification can be derived analytically and 
their essential properties investigated with a simple FIR model.
The results of this paper are summarized as follows:

\begin{itemize}
\item[(1)] 
General cases of the distribution of regressor vectors can be treated
for high resolution quantizers by introducing the concept of the
density of quantization subsections (Section~\ref{section:main}). 

\item[(2)] 
The optimization problems in (1) are reduced to minimizations of 
functionals and the solutions can be found by solving Euler--Lagrange
differential equations (Section~\ref{section:main}). 

\item[(3)] 
When the regressor vector has a form of uniform distribution, the
optimal quantization problem is reduced to a recursive minimization,
which can be solved by a dynamic programming (Section~\ref{sec:prev}).  

\item[(4)] 
In usual situation, the optimal quantizer is coarse near the origin of
the output signals and tends to be dense away from the origin
(Section~\ref{section:main} and Section~\ref{sec:prev}).  

\item[(5)] 
Subject to a limitation on the total quantity of information in the
quantized I/O data, there exists a trade-off between the magnitudes of
the quantization error and noise error (Section~\ref{sec:eval}).  
\end{itemize}

In this paper, we restrict the model to a SISO FIR model. 
For more realistic situations, we must extend the results to: 
a) ARX models, or MIMO systems, 
b) quantized input signal,
and 
c) online system identification and adaptive control. 
These remain for future study. 

\noindent{\bf\Large Acknowledgement}

\noindent
The author expresses deep gratitude for discussion and help 
to Professor Jan Maciejowski, University of Cambridge.

\setlength{\baselineskip}{11pt}
\bibliographystyle{plain}
\bibliography{control}

\setlength{\baselineskip}{16.6pt}

\appendix


\section{Appendix}
\label{appendex:proofs}

\noindent
{\bf Slutsky's Theorem} (e.g. \cite{Grimmett:2001})

For sequences of stochastic variables $X(i)$, $Y(i)$, 
assume that ${\rm plim}_{i\rightarrow \infty}[X(i)]$ and ${\rm
plim}_{i\rightarrow \infty}[Y(i)]$ converge to constants.  
Then, 
\[
\mathop{\rm plim}_{i\rightarrow \infty}[X(i)^{-1}Y(i)] 
=
\left(
\mathop{\rm plim}_{i\rightarrow \infty}[X(i)]
\right)^{-1}
\mathop{\rm plim}_{i\rightarrow \infty}[Y(i)] 
\]
holds. 

\noindent
{\bf Proof of Lemma~\ref{lemma:seisai}}

The outline of the proof is similar to that of Lemma~\ref{lemma:vari0}
 and we evaluate the value of:
$
\E
\left[
\rregh
{e}(\rregi)
\rregj
{e}(\rregk)
\right]
$
for possible cases in \rref{eq:conv10}:
$a\not = b \not = c \not = d$,
$a = b \not = c \not = d$,
$a = b \not = c  = d$,
$a = b = c = d$,
and
$a = c \not = b = d$ (the other possible cases in \rref{eq:conv10} are
 essentially identical to these cases). 

Let $\Ih$, $\Ii$, $\Ij$, or $\Ik$ be 
a quantized subsection of the axis of $\rregh$, $\rregi$, $\rregj$, or
$\rregk$, respectively, and consider a subset 
$\Ih \times \Ii \times \Ij \times \Ik$
in the space of $\tphi$.
Moreover, let ${\rregh}'$, ${\rregi}'$, ${\rregj}'$,
and ${\rregk}'$ be the quantized values, which are midpoints of
$\Ih$,
$\Ii$,
$\Ij$, and $\Ik$, respectively.  
The partial integral of 
$
\E
\left[
\rregh
{e}(\rregi)
\rregj
{e}(\rregk)
\right]
$
restricted to this subset is 
\begin{eqnarray*}
\int_{\II}
\rregh
{e}(\rregi)
\rregj
{e}(\rregk)
f(\rregh, 
\rregi, 
\rregj,
\rregk
)
d\rregh
d\rregi
d\rregj
d\rregk. 
\end{eqnarray*}
Let $2\Delta \tilde{\phi}$ be the width of the largest side of the
possible hyperrectangular parallelepiped regions in $\tilde{\phi}$ 
given by quantization,
then, when $a \not =  b \not =  c \not =  d$:
\begin{eqnarray}
\lefteqn{
\int_{\II}
\rregh
{e}(\rregi)
\rregj
{e}(\rregk)
f(\rregh, 
\rregi, 
\rregj,
\rregk
)
d\rregh
d\rregi
d\rregj
d\rregk
}
\nonumber
\\
& = &
\int_{\II}
\rregh
{e}(\rregi)
\rregj
{e}(\rregk)
\nonumber
\\
&&
\mbox{}
\times
\left(
\den
\right)
d\rregh
d\rregi
d\rregj
d\rregk
\nonumber
\\
& = &
{\rregh}'
{\rregj}'
\H_{bd}
\frac{2^4}{3^2}
\Delta \tphi^8 + O(\Delta \tphi^9),
\label{eq:deltay1}
\end{eqnarray}
%
and similarly, 
when $a = b \not =  c \not = d$:
\begin{eqnarray}
\int_{\II}
\rregh
{e}(\rregh)
\rregj
{e}(\rregk)
f(\rregh, 
\rregi, 
\rregj,
\rregk
)
d\rregh
d\rregi
d\rregj
d\rregk
 = 
({\rregh}'
{\rregj}'
\H_{ad}+{\rregj}'\H_{d})
\frac{2^4}{3^2}
\Delta \tphi^8 + O(\Delta \tphi^9),
\label{eq:deltay22}
\end{eqnarray}
and when $a = b \not =  c = d$:
\begin{eqnarray}
\lefteqn{
\int_{\II}
\rregh
{e}(\rregh)
\rregj
{e}(\rregj)
f(\rregh, 
\rregi, 
\rregj,
\rregk
)
d\rregh
d\rregi
d\rregj
d\rregk
}
\nonumber
\\
&=&
(
{\rregh}'{\rregj}'\H_{ac}
+
{\rregh}'\H_a
+
{\rregj}'\H_c
+
\H_{0}
)
\frac{2^4}{3^2}
\Delta \tphi^8 + O(\Delta \tphi^9).
\label{eq:deltay2}
\end{eqnarray}


\noindent
Alternatively, when $a = b = c = d$:
\begin{eqnarray}
\int_{\II}
\rregh
{e}(\rregh)
\rregh
{e}(\rregh)
f(\rregh, 
\rregi, 
\rregj,
\rregk
)
d\rregh
d\rregi
d\rregj
d\rregk
=
{\rregh}'^2
\H_0
\frac{2^4}{3}
\Delta \tphi^6
+
O(\Delta \tphi^7)
\label{eq:deltay3}
\end{eqnarray}
and similarly, 
when $a  = c \not =  b = d$:
\begin{eqnarray}
\int_{\II}
\rregh
{e}(\rregi)
\rregh
{e}(\rregi)
f(\rregh, 
\rregi, 
\rregj,
\rregk
)
d\rregh
d\rregi
d\rregj
d\rregk
 = 
{\rregh}'^2
\H_0
\frac{2^4}{3}
\Delta \tphi^6
+
O(\Delta \tphi^7).
\label{eq:deltay4}
\end{eqnarray}


The above show that, when $\Delta \tphi$ $\rightarrow$ 0, 
the rate of convergence of \rref{eq:deltay1} -- \rref{eq:deltay2} to 0 is faster than 
that of \rref{eq:deltay3} and \rref{eq:deltay4}. 
Therefore, we have the following: 
\begin{equation*}
\E
\left[
\left(
\sum_{\i = 1}^N
\tilde{\phi}_{k}(\i)
{e}(\reg(\i))
\right)^2
\right]
\;
\mathop{\rightarrow}_{\Delta y_{\rm max} \rightarrow 0}
\;
N\,
\E
\left[
\tilde{\phi}_{k}^2
{e}^2(\reg)
\right].
\vspace*{-12pt}
\end{equation*}
\hfill{$\square$}

\noindent
{\bf Derivation of eq.~\rref{eq:vari1001app}}
\begin{eqnarray*}
\rref{eq:vari1001}/N
&\stackrel{(i)}{=}&
\int
\sigma^2(\reg)
e^2(\reg)
f(\reg)
d\reg
 = 
\sum_{j}
\int_{\I_j}
\sigma^2(\reg)
(\qy_\j - \th1\reg)^2
f(\reg)
d\reg
\nonumber
\\& = &
\sum_{j} \int_{\regjp - \frac{1}{2}g_j^{-1}}^{\regjp + \frac{1}{2}g_j^{-1}} (\th1\regjp-\th1 \reg)^2 \cdot \sigma^2(\reg) f(\reg) d\reg
\nonumber
\\
&
\stackrel{\rm (i)}{=}
& 
\sum_{j} \int_{\regjp - \frac{1}{2}g_j^{-1}}^{\regjp + \frac{1}{2}g_j^{-1}} (\th1\regjp-\th1 \reg)^2 \sigma^2(\regjp) f_j d\reg + O(\Delta \tilde{\phi}^3)
\nonumber
\\
& = & 
\th1^2
\sum_j
\int_{\regjp - \frac{1}{2}g_j^{-1}}^{\regjp + \frac{1}{2}g_j^{-1}}
\frac{1}{12}g_j^{-2}
\sigma^2(\regjp) f_j d\reg
+
O(\Delta \tilde{\phi}^3)
\nonumber
\\
& 
\stackrel{\rm (ii)}{=}
& 
\th1^2
\sum_j
\int_{\regjp - \frac{1}{2}g_j^{-1}}^{\regjp + \frac{1}{2}g_j^{-1}}
\frac{1}{12}g(\tilde{\phi}_1)^{-2}
\sigma^2(\regjp) f_j d\tilde{\phi}_1
+
O(\Delta \tilde{\phi})
\nonumber
\\
&
\stackrel{\rm (iii)}{=}
& 
\th1^2
\sum_j
\int_{\regjp - \frac{1}{2}g_j^{-1}}^{\regjp + \frac{1}{2}g_j^{-1}}
\frac{1}{12}g(\tilde{\phi}_1)^{-2}
\sigma^2(\tilde{\phi}_{1}) f(\tilde{\phi}_1) d\tilde{\phi}_1
+
O(\Delta \tilde{\phi})
\\
&=&
\th1^2
\int
\frac{1}{12}g(\tilde{\phi}_1)^{-2}
\sigma^2(\tilde{\phi}_{1}) f(\tilde{\phi}_1) d\tilde{\phi}_1
+
O(\Delta \tilde{\phi}),
\end{eqnarray*}
where 
$\regjp$ is the midpoint of $\I_j$, 
(i) is by Assumption~\ref{assumption:smooth}.1, 
(ii) is by Assumption~\ref{assumption:smooth}.2, 
and 
(iii) is by Assumption~\ref{assumption:smooth}.1.
\hfill{$\square$}

\noindent
{\bf Proof of Theorem~\ref{theorem:fixed-rate}}

\noindent
The optimal solution can be given 
by using a similar technique to that in
\cite{Bennett:BSTJ48,Lloyd:TIT82}. 
With the calculus of variations, 
the following Euler--Lagrange equation:
\[
\frac{d}{d\tilde{\phi}_1} \left(
\frac{\partial {\cal F}}{\partial g}
\right)
-
\frac{\partial {\cal F}}{\partial G}
 = 0,
\] 
where
\[
G(\tilde{\phi}_1) : =  \int_{-\infty}^{\reg} g(\reg) d\reg,
\]
gives a differential equation:
\begin{equation*}
\frac{d}{d\tilde{\phi}_1}
\left(
-2 g(\tilde{\phi}_1)^{-3}
\sigma^2(\tilde{\phi}_1) f(\tilde{\phi}_1)
\right)  = 0, 
\end{equation*}
and the solution is:
\begin{equation*}
g(\tilde{\phi}_1) = K
\sigma^{\frac{2}{3}}(\tilde{\phi}_1)
f^{\frac{1}{3}}(\tilde{\phi}_1), \;\; K: \mbox{\rm constant}.
\end{equation*}
The constant number $K$ is directly calculated by the condition \rref{eq:opt-n-const},
and the value of the objective function is derived as follows. 
\begin{eqnarray*}
\int {\cal F}(g_{\rm f}(\tilde{\phi}_1)
) d\tilde{\phi}_1
& = & 
\int  
\frac{1}{12}\th1^2
(K \sigma^{\frac{2}{3}}(\tilde{\phi}_1)
f^{\frac{1}{3}}(\tilde{\phi}_1))^{-2} \sigma^2(\tilde{\phi}_1) f(\tilde{\phi}_1)
d\reg
\nonumber
\\
& = & 
\int  
\frac{1}{12}\th1^2
K^{-2}
\sigma^{\frac{2}{3}}(\reg)
f^{\frac{1}{3}}(\reg)
d\reg
 =  
\frac{1}{12}\th1^2 K^{-2}D
 = 
\frac{1}{12}\th1^2
D^3 M^{-2}
\vspace*{-12pt}
\end{eqnarray*}
\hfill$\square$\par

\noindent
{\bf Proof of Theorem~\ref{theorem:variable}}

\noindent
We use a similar technique to that in \cite{Gish:TIT68,Berger:TIT72}.
Let $\lambda$ be a Lagrange multiplier and consider the minimization
 of the following quantity. 
\begin{eqnarray*}
\int {\cal F}(g(\tilde{\phi}_1)
) d\tilde{\phi}_1
+ \lambda H_{\rm d}(f,g)
& = &
\int
\frac{1}{12}\th1^2
\left(
\frac{1}{g(\tilde{\phi}_1)}
\right)^2
\sigma^2(\reg)
f(\tilde{\phi}_1) 
- \lambda
f(\tilde{\phi}_1) \log \left(g^{-1}(\tilde{\phi}_1) \right)
d\tilde{\phi}_1
+ \lambda H_{\rm d}(f)
\nonumber
\\
& = &
\int
\frac{1}{12}\th1^2
f(\tilde{\phi}_1) 
\left(
g^{-2}(\tilde{\phi}_1)
\sigma^2(\reg)
+ \lambda
\log g(\tilde{\phi}_1)
\right)
d\tilde{\phi}_1
+ \lambda H(f)
\end{eqnarray*}
By applying the calculus of variations, 
we obtain:
\begin{eqnarray*}
\frac{\partial}{\partial g}
\left(
g^{-2}
\sigma^2(\reg)
+ \lambda
\log g
\right) 
& = &
-2g^{-3}
\sigma^2(\reg)
+ \lambda
g^{-1}
 = 
{\mbox{ constant }}.
\end{eqnarray*}
Fix the constant to be zero, then, 
\begin{eqnarray*}
g = \left(\frac{2}{\lambda}\right)^{\frac{1}{2}} \sigma(\reg),
\end{eqnarray*}
and by substituting this for $H(f, \, g)$, we obtain:
\begin{eqnarray*}
H(f, \, g) 
 = 
\int -f \log g^{-1} f d\reg
 = 
\log \left(\frac{2}{\lambda}\right)^{\frac{1}{2}} + \int -f \log \frac{f}{\sigma(\reg)} 
d\reg 
 = 
\log M. 
\end{eqnarray*}
Therefore, 
\begin{eqnarray*}
\left(\frac{2}{\lambda}\right)^{\frac{1}{2}}  = 
\exp 
\left(
\int f \log \frac{f}{\sigma(\reg)} d\reg + \log M
\right),
\end{eqnarray*}
and \rref{eq:gentroopt} is derived.
By substituting $\goptv$ for the objective integral, 
the following is derived. 
\begin{eqnarray*}
\int 
\frac{1}{12}\th1^2
g_{\rm v}^{-2}(\reg)\sigma^2(\reg) f(\reg) d\reg
& = & \frac{1}{12}\th1^2
\frac{\lambda}{2}
 = 
\frac{1}{12}\th1^2
K^{-2} M^{-2}
\end{eqnarray*}
\hfill$\square$\par

\noindent
{\bf Proof of Lemma~\ref{lemma:vari0}}

\noindent
The left hand side of \rref{eq:tenkai} is extended:
\begin{eqnarray}
\E
\left[
\left(
\sum_{\i = 1}^N
\tilde{\phi}_{k}(\i)
{e}(\reg(\i))
\right)^2
\right]
& = &
\E
\left[
\sum_{\i = 1}^N
\regk^2(\i)
{e}^2(\reg(\i))
\right]
+
2\E
\left[
\sum_{\i = 1}^{N-1}
\regk(\i)
{e}(\reg(\i))
\regk(\i+1)
{e}(\reg(\i+1))
\right]
+
\cdots
\nonumber
\\
& = &
N\,
\E
\left[
\regk^2
{e}^2(\reg)
\right]
+
2(N-1)
\E
\left[
\regk
{e}(\reg)
\tilde{\phi}_{k+1}
{e}(\tilde{\phi}_{2})
\right]
+ \cdots .
\label{eq:tenkaiappen}
\end{eqnarray}
In \rref{eq:tenkaiappen}, 
terms of the form 
$
\E
\left[
\rregh
{e}(\rregi)
\rregj
{e}(\rregk)
\right]
$
appear and in general, when \rref{eq:tenkaijyoken} and 
\rref{eq:tenkaijyoken2} are satisfied, 
$
\E
\left[
\rregh
{e}(\rregi)
\rregj
{e}(\rregk)
\right]
$
can be calculated according to the combinations of $a$, $b$, $c$ and $d$ as follows.  

When $a \not =  b \not =  c \not =  d$,
\begin{eqnarray*}
&&
\E
\left[
\rregh
{e}(\rregi)
\rregj
{e}(\rregk)
\right]
=
\int
\rregh
{e}(\rregi)
\rregj
{e}(\rregk)
f(\rregh, 
\rregi, 
\rregj,
\rregk
)
d\rregh
d\rregi
d\rregj
d\rregk
\nonumber
\\
&&
= 
\int
{e}(\rregi)
\rregj
{e}(\rregk)
\left(
\int
\rregh
f(\rregh, 
\rregi, 
\rregj,
\rregk
)
d\rregh
\right)
d\rregi
d\rregj
d\rregk
\stackrel{\rref{eq:tenkaijyoken}}{=}
\int
{e}(\rregi)
\rregj
{e}(\rregk)
\times 0
\times
d\rregi
d\rregj
d\rregk
 = 0,
\end{eqnarray*}
and similarly, when $a = b \not =  c \not =  d$,
\begin{eqnarray*}
&&
\E
\left[
\rregh
{e}(\rregi)
\rregj
{e}(\rregk)
\right]
=
\int
\rregh
{e}(\rregh)
\rregj
{e}(\rregk)
f(\rregh, 
\rregj,
\rregk
)
d\rregh
d\rregj
d\rregk
\nonumber
\\
&& =
\int
\rregh
{e}(\rregh)
{e}(\rregk)
\left(
\int
\rregj
f(\rregh, 
\rregi, 
\rregj,
\rregk
)
d\rregj
\right)
d\rregh
d\rregk
\stackrel{\rref{eq:tenkaijyoken}}{=} 
\int
\rregh
{e}(\rregh)
{e}(\rregk)
\times 0
\times
d\rregh
d\rregk
 = 0,
\end{eqnarray*}
%
and when $a = b \not = c = d$,
\begin{eqnarray*}
&&
\E
\left[
\rregh
{e}(\rregi)
\rregj
{e}(\rregk)
\right]
=
\int
\rregh
{e}(\rregh)
\rregj
{e}(\rregj)
f(\rregh, 
\rregj
)
d\rregh
d\rregj
\nonumber
\\
&&=
\int
\rregh
{e}(\rregh)
\left(
\int
\rregj
{e}(\rregj)
f(\rregh, 
\rregj
)
d\rregj
\right)
d\rregh
\stackrel{\rref{eq:tenkaijyoken2}(\rm i.e. \rref{eq:sim})}{=} 
\int
\rregh
{e}(\rregh)
\times 0
\times
d\rregh
 = 0.
\end{eqnarray*}
%
On the other hand, 
there is no term when 
$a = c \not =  b \not =  d$ or $b = d \not =  a \not =  c$ 
in \rref{eq:tenkaiappen}. 
Finally, when $a = c$, $b = d$,
\begin{eqnarray*}
\E
\left[
\rregh
{e}(\rregi)
\rregj
{e}(\rregk)
\right]
& = &
\E
\left[
\rregh^2
{e}^2(\rregi)
\right].
\end{eqnarray*}
The other cases are essentially equivalent to one of the above cases (for
example, $a = d \not =  b \not =  c$ is equivalent to $a = b
\not =  c \not =  d$).  

From the above, it follows that:
\begin{eqnarray*}
\E
\left[
\left(
\sum_{\i = 1}^N
\regk(\i)
{e}(\reg(\i))
\right)^2
\right]
& = &
N
\E
\left[
\regk^2
{e}^2(\reg)
\right].
\end{eqnarray*}
\hfill{$\square$}

\noindent
{\bf Proof of Proposition~\ref{th:mainprop}}

\noindent
Consider $\S_1 = (0, \; d_1]$ (equivalently $\I_1$ on $\reg$)
and 
$\S_2 = (d_1, \; d_2]$ (equivalently $\I_2$ on $\reg$)
where their boundaries $d_1$, $d_2$ have the relationship:
\begin{equation}
d_1 = r_1 d_2, \; r_1 \in [0, 1]
\label{eq:rdef1}
\end{equation}
with an appropriate ratio $r_1$. 
The quantized values $\qyone$ and $\qytwo$ for the subsections
$\S_1$ on $y$ (or $\I_1$ on $\reg$) and $\S_2$ (or $\I_2$) satisfying the bias-free condition:
\[
\E_{\I_j}\left[\reg \cdot e(\reg)\right] = 0, \; j = 1, 2
\]
are given as follows.
Let $\qyone = \frac{d_1}{2}+h_1$, where $h_1$ is an offset from the
center of $\S_1$, then,
\begin{eqnarray*}
\E_{\I_1}\left[\reg \cdot e(\reg)\right]
 = 
\int_{-k_1}^{k_1} -\left(\frac{r_1d_2}{2}+z\right)
(z-h_1)
\frac{1}{2\kappaY}dz
 =  - \frac{1}{2\kappaY} \left(\frac{2}{3}k_1^3 - r_1d_2h_1k_1\right),
\; k_1 : =  \frac{d_1}{2},
\nonumber
\end{eqnarray*}
and therefore,
\begin{equation*}
h_1 = \frac{2}{3}\frac{k_1^2}{r_1d_2}
 =  \frac{1}{6}r_1d_2.
\end{equation*}
Similarly, let $\qytwo : =  \frac{(1+r_1)d_2}{2} + h_2$, where $h_2$ is
the offset, then,
\begin{eqnarray*}
&&
\E_{\I_2}\left[\reg \cdot e(\reg)\right]
 = 
\int_{-k_2}^{k_2}
-
\left(
\frac{d_2+r_1d_2}{2}+z
\right)(z-h_2)\frac{1}{2\kappaY}dz
 = -\frac{1}{2\kappaY}
\left(
\frac{2}{3}k_2^3 - (d_2+r_1d_2)h_2 k_2
\right), 
\nonumber
\\
&&
k_2 : = \frac{d_2(1-r_1)}{2},
\nonumber
\end{eqnarray*}
and therefore,
\begin{equation*}
h_2 = \frac{2}{3} k_2^2 \frac{1}{d_2(1+r_1)}
 =  \frac{1}{6}\frac{(1-r_1)^2}{(1+r_1)}d_2.
\end{equation*}

By using these $\qyone$ and $\qytwo$, the variances of
$\reg e(\reg)$ in each subsection can be calculated as follows.
Let $\V_{\I_j}\left[\sigma(\reg) e(\reg)\right]$ denote the quantity:
\begin{equation}
\V_{\I_j}\left[\sigma(\reg) e(\reg)\right]
: =  \int_{\I_j} 
\sigma^2(\reg)e^2(\reg) 
f(\reg)d\reg,
\label{eq:limv}
\end{equation}
where
\[
\sigma^2(\reg) = \tilde{\phi_1}^2 + \frac{1}{3}\kappaY^2(n-1),
\]
then,
for even $\Mo$,
\begin{eqnarray*}
\V_{\I_1}\left[
\sigma(\reg) e(\reg)
\right]
&=& 
\int_{-k_1}^{k_1}
\left\{
\left(
\frac{r_1d_2}{2}+z
\right)^2
+
\frac{1}{3}\kappaY^2(n-1)
\right\}
(z-h_1)^2
\frac{1}{2\kappaY}
dz
\\
&=&
\frac{1}{2160}
\frac{1}{2\kappaY}
d_2^5
\left(
32 r_1^5
\right)
+
\frac{1}{27}
\frac{1}{2\kappaY}
\kappaY^2(n-1)d_2^3r_1^3
\nonumber
\end{eqnarray*}
and similarly
\begin{eqnarray*}
\V_{\I_2}\left[
\sigma(\reg) e(\reg)
\right]
& = &
\int_{-k_2}^{k_2}
\left\{
\left(
\frac{d_2(1+r_1)}{2}+z
\right)^2
+\frac{1}{3}\kappaY^2(n-1)
\right\}
(z-h_2)^2
\frac{1}{2\kappaY}
dz
\nonumber
\\
& = &
\frac{1}{2160}
\frac{1}{2\kappaY}
d_2^5
\left\{
-18(1-r_1)^5 + 45 (1+r_1)^2(1-r_1)^3 + 5(1-r_1)^7(1+r_1)^{-2}
\right\}
\nonumber
\\
&& \mbox{}+
\frac{1}{108}\frac{1}{2\kappaY}\kappaY^2(n-1)d_2^3
\left\{
3(1-r_1)^3+\frac{(1-r_1)^5}{(1+r_1)^2}
\right\}
\end{eqnarray*}
Therefore, the sum of 
$\V_{\I_1}\left[\sigma(\reg)e(\reg)\right]$ and
$\V_{\I_2}\left[\sigma(\reg)e(\reg)\right]$ is:
\begin{eqnarray}
\V_{\I_1}\left[\sigma(\reg)e(\reg)\right]
+
\V_{\I_2}\left[\sigma(\reg)e(\reg)\right] 
&=& 
\frac{1}{2160}
\frac{1}{2\kappaY}
\left(
d_2^5 \psi(r_1; 32)
+
20\kappaY^2(n-1)d_2^3
\xi(r_1; 4)
\right),
\nonumber
\\
\psi(r_1;32) &:=& 
32 r_1^5 -18(1-r_1)^5 + 45 (1+r_1)^2(1-r_1)^3 + 5(1-r_1)^7(1+r_1)^{-2},
\nonumber
\\
\xi(r_1;4) &:=& 
4r_1^3+3(1-r_1)^3+\frac{(1-r_1)^5}{(1+r_1)^2}.
\label{eq:f1}
\end{eqnarray}
The minimizer $r_1^o$ of this sum is given by:
\begin{eqnarray*}
r_1^o & = & \arg \min_{r_1 \in [0, 1]} 
\left(
d_2^5 \psi(r_1; 32)
+
20\kappaY^2(n-1)d_2^3
\xi(r_1; 4)
\right)
\nonumber
\\
\psi_1^{\min} &:=& \psi(r_1^o;32),
\nonumber
\\
\xi_1^{\min} &:=& \xi(r_1^o;4),
\nonumber
\end{eqnarray*}
and
\begin{eqnarray*}
\left.
\left(\V_{\I_1}\left[
\sigma(\reg)e(\reg)\right]
+
\V_{\I_2}\left[
\sigma(\reg)e(\reg)\right]
\right)
\right|_{r_1 = r_1^o}
 = 
\frac{1}{2160}
\frac{1}{2\kappaY}
\left(
d_2^5 \psi_1^{\min}
+
20\kappaY^2(n-1)d_2^3
\xi_1^{\min}
\right).
\end{eqnarray*}
Note that the optimal $r_1^o$ is independent of the value of $d_2$, which is
the upper boundary of $\S_2$.

Next, we successively consider another subsection $\S_3$ on $y$ (or $\I_3$ on $\reg$) together with
$\S_1$ (or $\I_1$) 
and
$\S_2$ (or $\I_2$).
Assume the relation between $d_2$ and $d_3$ is:
\begin{equation*}
d_2 = r_2 d_3,
\end{equation*}
where $r_2$ is an appropriate number in $[0, \, 1]$.
Similar to the case of 
$\S_1$ and $\S_2$, the offset $h_3$ of
$\qythree$ for the subsection $\S_3$ on $y$ (or $\I_3$ on $\reg$) satisfying
$\E_{\I_3}\left[\reg e(\reg)\right] = 0$ is
\begin{eqnarray*}
h_3 &=& \frac{2}{3} k_3^2 \frac{1}{d_3(1+r_2)}
 =  \frac{1}{6}\frac{(1-r_2)^2}{(1+r_2)}d_3, \; k_3: = \frac{d_3(1-r_2)}{2}
\end{eqnarray*}
and $\V_{\I_3}\left[\sigma(\reg)e(\reg)\right]$ can be
given as
\begin{eqnarray*}
\V_{\I_3}\left[\sigma(\reg)e(\reg)\right]
& = &
\int_{-k_3}^{k_3}
\left\{
\left(
\frac{d_3(1+r_2)}{2}+z
\right)^2
+\frac{1}{3}\kappaY^2(n-1)
\right\}
(z-h_3)^2
\frac{1}{2\kappaY}
dz
\nonumber
\\
& = &
\frac{1}{2160}
\frac{1}{2\kappaY}
d_3^5
\left\{
-18(1-r_2)^5 + 45 (1+r_2)^2(1-r_2)^3 + 5(1-r_2)^7(1+r_2)^{-2}
\right\}
\nonumber
\\
&& \mbox{}+
\frac{1}{108}\frac{1}{2\kappaY}\kappaY^2(n-1)d_3^3
\left\{
3(1-r_2)^3+\frac{(1-r_2)^5}{(1+r_2)^2}
\right\}
\end{eqnarray*}
Therefore, the optimal $r_2^o$ that minimizes
$\V_{\I_1}\left[\sigma(\reg)e(\reg)\right]
+
\V_{\I_2}\left[\sigma(\reg)e(\reg)\right]
+
\V_{\I_3}\left[\sigma(\reg)e(\reg)\right]$
is found by solving the following minimization problem:
\begin{eqnarray}
r_2^o&: = &
\arg\min_{r_2} 
\left(
\V_{\I_1}\left[\sigma(\reg)e(\reg)\right]
+
\V_{\I_2}\left[\sigma(\reg)e(\reg)\right]
+
\V_{\I_3}\left[\sigma(\reg)e(\reg)\right]
\right)
\nonumber
\\
& = &
\arg \min_{r_2}
\frac{1}{2160}
\frac{1}{2\kappaY}
\left(
d_3^5 \psi(r_2; \psi_1^{\min})
+
20\kappaY^2(n-1)d_3^3
\xi(r_2; \xi_1^{\min})
\right)
\nonumber
\\
\psi(r_2 ; \psi_1^{\min})
&: = &
\psi_1^{\min}r_2^5  -18(1-r_2)^5 + 45 (1+r_2)^2(1-r_2)^3 + 5(1-r_2)^7(1+r_2)^{-2},
\nonumber
\\
\xi(r_2 ; \xi_1^{\min}) &:=& 
\xi_1^{\min}r_2^3+3(1-r_2)^3+\frac{(1-r_2)^5}{(1+r_2)^2}.
\label{eq:f2}
\end{eqnarray}
By repeating the above process, we obtain the result.
\hfill{$\square$}

\begin{lemma}
\label{lemma:a1}
A rational function
\[
\psi(r) : =  \alpha r^5  -18(1-r)^5 + 45 (1+r)^2(1-r)^3 + 5(1-r)^7(1+r)^{-2}
\]
has only one local minimum in $r \in (0, \; 1)$ when $\alpha > 0$.
\end{lemma}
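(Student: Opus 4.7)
The plan is to reduce the critical-point equation $\psi'(r) = 0$ to an equation of the form $\alpha = G(r)$ for an explicit $\alpha$-independent function $G$, and then to show that $G$ is strictly monotone on $(0,1)$. By monotonicity, each $\alpha > 0$ would then correspond to a unique $r \in (0,1)$, yielding a unique interior critical point; a sign analysis near the endpoints shows it must be a local minimum.

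Writing $\chi(r) := \psi(r) - \alpha r^5$, the derivative $\chi'(r)$ groups naturally: its three polynomial summands share a factor $(1-r)^2$, and the two rational summands share $(1-r)^6(1+r)^{-3}$. A direct calculation gives
\[
\chi'(r) = -5(1-r)^2 A(r), \qquad A(r) := 9(3r^2+10r-1) + (1-r)^4(1+r)^{-3}(9+5r).
\]
Hence $\psi'(r) = 5[\alpha r^4 - (1-r)^2 A(r)]$, and on $(0,1)$ the equation $\psi'(r) = 0$ is equivalent to
\[
\alpha = G(r) := \frac{(1-r)^2 A(r)}{r^4}.
\]
A short Taylor expansion gives $A(0) = 0$ and $A'(0) = 32$, so $A(r) = 32r + O(r^2)$ and therefore $G(r) \sim 32/r^3 \to +\infty$ as $r \to 0^+$, while $G(1) = 0$. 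Thus $G$ is a continuous surjection from $(0,1)$ onto $(0, \infty)$.

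The crux of the proof is showing that $G$ is strictly decreasing on $(0,1)$. Differentiating and clearing the positive factor $r^5/(1-r)$, this reduces to the inequality
\[
r(1-r)\,A'(r) < (4-2r)\,A(r), \qquad r \in (0,1).
\]
Multiplying through by $(1+r)^4$ converts this into a polynomial inequality of moderate degree. It can be checked either by direct expansion, showing the difference is a sum of sign-definite terms on $(0,1)$, or by the substitution $t = (1-r)/(1+r)$, under which $\chi$ takes the cleaner rational form $32t^3(5t^4-18t^2+45)/(1+t)^5$ and the resulting positivity statement is a polynomial inequality in $t$. This polynomial positivity step is the main computational obstacle of the proof.

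Once $G$ is known to be strictly decreasing, the equation $G(r) = \alpha$ has exactly one solution $r^* \in (0,1)$, giving a unique critical point of $\psi$ in $(0,1)$. Since $\psi(0) = 32$, $\psi'(0) = 0$ with $\psi''(0) = -160 < 0$ (so $\psi$ strictly decreases immediately to the right of $0$), and $\psi'(1) = 5\alpha > 0$ (so $\psi$ strictly increases as $r \to 1^-$), the interior critical point $r^*$ is necessarily a local minimum, and it is the unique such point in $(0,1)$.
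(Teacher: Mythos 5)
Your reduction is sound as far as it goes, and I have checked the computations you actually perform: the factorization $\chi'(r)=-5(1-r)^2A(r)$ with $A(r)=9(3r^2+10r-1)+(1-r)^4(1+r)^{-3}(9+5r)$ is correct, as are $A(0)=0$, $A'(0)=32$, the identity $\frac{r^5}{1-r}G'(r)=r(1-r)A'(r)-(4-2r)A(r)$, the endpoint data $\psi'(0)=0$, $\psi''(0)=-160$, $\psi'(1)=5\alpha$, and the substitution $t=(1-r)/(1+r)$ giving $\chi=32t^3(5t^4-18t^2+45)/(1+t)^5$. The logical skeleton (unique interior zero of $\psi'$ with $\psi'<0$ near $0$ and $\psi'>0$ near $1$ forces a unique local minimum) is also fine. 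Note that the paper itself does not prove this lemma here --- it refers to the long version \cite{Tsumura:METR05-04} --- so there is no in-paper argument to compare against.

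The genuine gap is that you never prove the one statement that carries all the content of the lemma: the strict inequality $r(1-r)A'(r)<(4-2r)A(r)$ on $(0,1)$, equivalently the positivity on $(0,1)$ of the degree-$7$ polynomial $(4-2r)(1+r)^4A(r)-r(1-r)(1+r)^4A'(r)$. You assert it ``can be checked either by direct expansion \dots or by the substitution $t=(1-r)/(1+r)$'' and explicitly flag it as ``the main computational obstacle,'' but you do not exhibit the expansion, the sign-definite grouping, or any argument (Sturm sequence, coefficient positivity in a suitable basis such as powers of $r$ and $1-r$, etc.). Without this, uniqueness of the critical point --- i.e., the entire conclusion of the lemma --- is unproven; everything you did establish (existence of at least one interior critical point which is a local minimum) is the easy half. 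Numerical spot checks suggest the inequality does hold with a comfortable margin, so the route should close, but as written the proof is a correct reduction followed by an unverified claim rather than a complete argument. To finish, carry out the expansion in the variable $t$ (where $\chi$ and hence $A$ take their simplest form) and exhibit the resulting polynomial as a nonnegative combination of monomials in $t$ and $1-t$, or give an equivalent certificate of positivity.
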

Refer to \cite{Tsumura:METR05-04} for the proof.

\noindent
{\bf Slutsky's theorem}
\[
\mathop{\rm plim}_{i\rightarrow \infty}[X(i)^{-1}Y(i)] 
=
(\mathop{\rm plim}_{i\rightarrow \infty}[X(i)])^{-1}
\mathop{\rm plim}_{i\rightarrow \infty}[Y(i)] 
\]
subject to that ${\rm plim}_{i\rightarrow \infty}[X(i)]$ 
and ${\rm plim}_{i\rightarrow \infty}[Y(i)]$ exist.

\noindent
{\bf Proof of Lemma~\ref{lemma:rate}}

\noindent
From Lemma~\ref{lemma:a1}, it is known that $\psi(r,\psi_0^{\min}=32)$ has only one local
minimum in $r \in (0, 1)$.
Moreover, from
\[
\psi(0; \alpha) = 32, \; \forall \alpha > 0, \;
\psi(1; \psi_{j-1}^{\min}) = \psi_{j-1}^{\min},  \; \psi_0^{\min} = 32,
\]
the minimum value $\psi_1^{\min}$ satisfies
\[
\psi_1^{\min} < 32.
\]
Next, $\psi(r; \psi_1^{\min})$ satisfies
\[
\psi(0; \psi_1^{\min}) = 32, \; \psi(1; \psi_1^{\min}) = \psi_1^{\min} < 32,
\]
and also $\psi(r; \psi_1^{\min})$ has only one local minimum in $r \in (0, 1)$.
This means 
\[
\psi_1^{\min} > \psi_2^{\min}. 
\]
The difference between $\psi(r; \psi_0^{\min})$ and $\psi(r; \psi_1^{\min})$ is only the
coefficient of the term $r^5$ and 
$r^5$ is a strictly increasing function in $(0, 1]$.
Therefore, with $\psi_0^{\min} > \psi_1^{\min}$, 
\begin{equation*}
r_1^o < r_2^o < 1.
\end{equation*}
By repeating the same process, we finally obtain:
\begin{equation*}
r_1^o < r_2^o < r_3^o < \cdots < 1.
\end{equation*}
\vspace{-.5cm}

Next to show $\lim_{j\rightarrow \infty} r_j^o = 1$. 
Let $\lim_{j\rightarrow \infty} r_j^o = r_\infty$.
Then, $r_\infty$ satisfies:
\begin{eqnarray}
&&
r_\infty : =  \arg \min_{r \in [0, 1]} \psi(r; \psi_\infty^{\min})
\nonumber
\\
&&
\psi_\infty^{\min} =  \psi(r_\infty; \psi_\infty^{\min}).
\nonumber
\end{eqnarray}
Note that if $\psi_\infty^{\min} > 0$, $\psi(r; \psi_\infty^{\min})$ 
also has only one local minimum in $r \in (0, \; 1)$.  
On the other hand, when $\psi_\infty^{\min} = 0$, 
it is also known that $\psi(r; \psi_\infty^{\min})$ 
is a decreasing function in $r \in [0, \; 1]$
from the proof of Lemma~\ref{lemma:a1} and 
$\min_r \psi(r; \psi_\infty^{\min}) = \psi(1; \psi_\infty^{\min})$.   
From \rref{eq:function0}, $\psi(1; \psi_\infty^{\min}) = \psi_{\infty}^{\min}$, 
and the minimum is at $r = 1$.  
This means $r_{\infty} = 1$ (and $\psi_\infty^{\min} = 0$). 
\hfill{$\square$}

\noindent
{\bf Proof of Lemma~\ref{lemma:rate2}}

\noindent
On the subsections $\I_j$ ($\S_j$) and $\I_{j+1}$ ($\S_{j+1}$), 
i.e., the general case for \rref{eq:rdef1} -- \rref{eq:f2}, 
from:
\begin{eqnarray*}
\int_{-k_j}^{k_j}
\left(
\frac{d_j+d_{j+1}}{2}
+z
\right)
\left(
z - h_j
\right) dz
 = 
\frac{2}{3}k_j^3 - (d_j + d_{j+1}) h_j k_j,
\end{eqnarray*}
the offsets $h_j$ and $h_{j+1}$ such that
$\E_{\I_j}\left[\reg e(\reg)\right]  = 0$ and 
$\E_{\I_{j+1}}\left[\reg e(\reg)\right]  = 0$
are given by:
\begin{eqnarray*}
&&
h_j = \frac{2}{3}\frac{1}{d_j + d_{j+1}} k_i^2, \;
k_j : =  \frac{d_{j+1} - d_j}{2},
\;
h_{j+1} = \frac{2}{3}\frac{1}{d_{j+1} + d_{j+2}} k_{j+1}^2, \;
k_{j+1} : =  \frac{d_{j+2} - d_{j+1}}{2}.
\end{eqnarray*}
On the other hand, $\V_{\I_j}\left[\reg e(\reg)\right]$ is calculated by:
\begin{eqnarray*}
\V_{\I_j}\left[\reg e(\reg)\right]
 = 
\int_{-k_j}^{k_j}
\left(
\frac{d_j+d_{j+1}}{2}
+
z
\right)^2
\left(
z - h_j
\right)^2 dz
 = 
A
\left(
d_{j+1} - d_j
\right)^5
+
B
\left(
d_j + d_{j+1}
\right)^2
\left(
d_{j+1} - d_j
\right)^3,
\end{eqnarray*}
where
\begin{equation*}
A : =  \frac{1}{5\cdot 2^4} - \frac{1}{3^2\cdot 2^3} < 0,
\;
B : =  \frac{1}{3\cdot 2^4} > 0.
\end{equation*}
Therefore:
\begin{eqnarray}
\V_{\I_j}\left[\reg e(\reg)\right]
+
\V_{\I_{j+1}}\left[\reg e(\reg)\right]
& = &
A (d_{j+1} - d_{j})^5 + B(d_{j+1}+d_j)^2(d_{j+1}-d_j)^3
\nonumber
\\
&&
\mbox{} 
+ A (d_{j+2} - d_{j+1})^5 +
B(d_{j+2}+d_{j+1})^2(d_{j+2}-d_{j+1})^3
\nonumber
\\
&=:& Z(d_{j+1}).
\label{eq:v1pv2}
\end{eqnarray}
For given $d_j$ and $d_{j+2}$, 
consider which side the minimum point of $Z(d_{j+1})$ is on from the
center of $d_j$ and $d_{j+2}$.  
From $A < 0$ and $B > 0$ and 
the symmetric structure of 
$Z(d_{j+1})$, except for the terms 
$(d_{j+1}+d_j)^2$ and $(d_{j+2}+d_{j+1})^2$ where
$B(d_{j+1}+d_j)^2  < B(d_{j+2}+d_{j+1})^2$, 
it is known that $Z(d_{j+1})$ 
has its minimum at $d_o > \frac{d_j + d_{j+2}}{2}$.
This means $|\I_j| > |\I_{j+1}|$, that is, $|\S_j| > |\S_{j+1}|$.
The same applies for arbitrary sections $\I_j$ and
$\I_{j+1}$, 
and we can conclude the statement is true.
\hfill $\square$

\noindent
{\bf Proof of Lemma~\ref{lemma:asympt}}

\noindent
From Lemma~\ref{lemma:rate} and its proof, it is known that when $j$ $\rightarrow$ $\infty$,
$r_j^o$ and $\psi_j^{\min}$ converge to 1 and 0, respectively.
Therefore, by employing the Taylor series expansion, 
$\psi(r; \psi_{j-1}^{\min})$ 
can be represented by:
\begin{equation*}
\psi(r; \psi_{j-1}^{\min}) = \psi_{j-1}^{\min} (1-5(1-r)+10(1-r)^2-10(1-r)^3)
+ 45\cdot 2^2(1-r)^3 + O((1-r)^4)
\end{equation*}
near $r = 1$ at sufficiently large $j$.  
By applying a variable transformation $1-r  = : \epsilon$, we obtain
\begin{equation}
\psi(\epsilon; \psi_{j-1}^{\min}) 
= \psi_{j-1}^{\min} (1-5\epsilon +10\epsilon^2-10\epsilon^3)
+ 180 \epsilon^3 + O(\epsilon^4)
\label{eq:poly0}
\end{equation}
at $\epsilon \rightarrow 0$.
Denote the local minimum of $\psi(\epsilon; \psi_{j-1}^{\min})$ as
$\epsilon_{j}$, then $\epsilon_j$ must satisfy: 
\begin{equation}
\psi_{j-1}^{\min} (-5 +20\epsilon_j-30\epsilon_j^2)
+ 540 \epsilon_j^2 + O(\epsilon_j^3)
 = 0.
\label{eq:orderofepsilon0}
\end{equation}
From 
\rref{eq:orderofepsilon0}, 
it is simple to verify that:
\begin{equation}
\epsilon_j = \left(\frac{1}{108}\psi_{j-1}^{\min}\right)^{1/2} 
+ 
o
\left(
\left(
\psi_{j-1}^{\min}
\right)^{1/2}
\right) 
\label{eq:order2}
\end{equation}
at $\psi_{j-1}^{\min} \rightarrow 0$. 
On the other hand, from \rref{eq:poly0}, 
$\psi_{j}^{\min}$ is represented by:
\begin{equation}
\psi_{j}^{\min}
 =  \psi_{j-1}^{\min} (1-5\epsilon_j +10\epsilon_j^2-10\epsilon_j^3)
+ 180 \epsilon_{j}^3 + O(\epsilon_j^4),
\label{eq:hatf}
\end{equation}
and with \rref{eq:order2}, 
we obtain:
\begin{eqnarray}
\psi_{j}^{\min}- \psi_{j-1}^{\min} 
& = &
-5
\left(
\frac{1}{108}
\right)^{1/2}
{\psi_{j-1}^{\min}}^{3/2}
+
180
\left(
\frac{1}{108}
\right)^{3/2}
{\psi_{j-1}^{\min}}^{3/2}
+
O({\psi_{j-1}^{\min}}^{2}
)
\nonumber
\\
& = &
-5\cdot 3^{-\frac{5}{2}}
{\psi_{j-1}^{\min}}
^{\frac{3}{2}}
+
O({\psi_{j-1}^{\min}}^{2})=:{\cal P}(\psi_{j-1}^{\min}).
\label{eq:symmain}
\end{eqnarray}
With the convergence $\psi_j^{\min} \rightarrow 0$, 
we derive the statement of the lemma. 
\hfill$\square$

\noindent
\begin{lemma}\label{lemma:recurrentdiff}
$\tilde{\psi}(m) \geq \hat{\psi}(m)$ at $m = 0$, $1$, ... , 
when $\tilde{\psi}(0) \geq \hat{\psi}(0)$. 
\end{lemma}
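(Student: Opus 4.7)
\noindent
\textbf{Proof plan for Lemma~\ref{lemma:recurrentdiff}.}
My plan is to establish the inequality by induction on $m$, transferring one step of the ODE flow of $\tilde{\psi}$ into a discrete step that can be compared directly with the recurrence defining $\hat{\psi}$. Throughout I work in the regime where both sequences are small and positive, which is the regime in which the $O(\cdot)$ terms entering $\mathcal{P}$ and the inequality $(a+\nu)\tilde{\psi}^b\geq \mathcal{P}(\tilde{\psi})$ are valid; if necessary the induction is started from some $m_0$ sufficiently large rather than from $m=0$.

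The base case is the assumption $\tilde{\psi}(0)\geq \hat{\psi}(0)$. For the inductive step I assume $\tilde{\psi}(m-1)\geq \hat{\psi}(m-1)>0$ and estimate $\tilde{\psi}(m)$ from below in three moves. First, because $a+\nu<0$ and $\tilde{\psi}(s)>0$, the ODE $d\tilde{\psi}/dm=(a+\nu)\tilde{\psi}^b$ forces $\tilde{\psi}$ to be strictly decreasing, so $\tilde{\psi}^b(s)\leq \tilde{\psi}^b(m-1)$ on $[m-1,m]$; combined with the negative sign of $a+\nu$ this yields
\[
\tilde{\psi}(m)-\tilde{\psi}(m-1)\;=\;\int_{m-1}^{m}(a+\nu)\tilde{\psi}^b(s)\,ds\;\geq\;(a+\nu)\tilde{\psi}^b(m-1).
\]
Second, by the defining property of $\nu$ from the text, $(a+\nu)\tilde{\psi}^b(m-1)\geq \mathcal{P}(\tilde{\psi}(m-1))$, so
\[
\tilde{\psi}(m)\;\geq\;\tilde{\psi}(m-1)+\mathcal{P}(\tilde{\psi}(m-1)).
\]
Third, set $F(\psi):=\psi+\mathcal{P}(\psi)=\psi+a\psi^{3/2}+O(\psi^2)$. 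Then $F'(\psi)=1+\tfrac{3}{2}a\psi^{1/2}+O(\psi)$, which is strictly positive for all sufficiently small $\psi>0$, so $F$ is increasing on a neighbourhood of $0$. Applying the inductive hypothesis and the monotonicity of $F$,
\[
\tilde{\psi}(m)\;\geq\;F(\tilde{\psi}(m-1))\;\geq\;F(\hat{\psi}(m-1))\;=\;\hat{\psi}(m),
\]
which closes the induction.

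The main obstacle I anticipate is bookkeeping around the asymptotic regime: the inequality $(a+\nu)\tilde{\psi}^b\geq \mathcal{P}(\tilde{\psi})$, the positivity of $F'$, and the convergence of both sequences to $0$ all hold only for small enough argument, so one must verify that once $\tilde{\psi}(m_0)$ and $\hat{\psi}(m_0)$ are sufficiently small both sequences remain in that regime for all larger $m$. This follows because $F$ maps a small positive $\psi$ to a still smaller positive value (as $\mathcal{P}(\psi)<0$ for small $\psi>0$), so the inductive step itself propagates smallness, and similarly $\tilde{\psi}$ remains small by monotone decrease under the ODE. With this invariant in hand the three-line argument above completes the proof.
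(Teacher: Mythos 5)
Your proof is correct, but it takes a genuinely different route from the paper's. The paper argues by contradiction in continuous time: it linearly interpolates $\hat{\psi}$ to a function $\hat{\psi}'$ on ${\cal R}$, supposes $\tilde{\psi}$ crosses $\hat{\psi}'$ downward at some $m'\in(m-1,m)$, and derives a contradiction by comparing the slope $(a+\nu)\tilde{\psi}^b(m')$ of the ODE trajectory at the crossing with the constant slope ${\cal P}(\hat{\psi}(m-1))$ of the interpolant, using that ${\cal P}$ is decreasing so that $\tilde{\psi}(m')<\hat{\psi}(m-1)$ forces the trajectory's slope to be strictly larger. You instead run a direct discrete induction: integrating the ODE over one unit interval and using the monotone decrease of $\tilde{\psi}$ together with $a+\nu<0$ gives $\tilde{\psi}(m)\geq \tilde{\psi}(m-1)+(a+\nu)\tilde{\psi}^b(m-1)\geq F(\tilde{\psi}(m-1))$ with $F(\psi)=\psi+{\cal P}(\psi)$, and the monotonicity of $F$ near $0$ then propagates the inequality. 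Both arguments rest on the same two ingredients (the defining inequality of $\nu$ and the monotonicity of ${\cal P}$, equivalently of $F$), but yours dispenses with the interpolation construct and the slightly delicate first-crossing reasoning, and it makes explicit the smallness invariant both proofs tacitly need; your option of starting the induction at some $m_0$ is consistent with the paper's own hedge that the comparison holds ``at sufficiently large integer $m$.'' The one point to tidy is the computation of $F'$: an $O(\psi^2)$ bound on the remainder in ${\cal P}$ does not by itself control its derivative, so you should either invoke the smoothness of the concrete remainder arising in \rref{eq:symmain} or replace the derivative argument by a difference estimate such as $F(\psi_2)-F(\psi_1)\geq(\psi_2-\psi_1)\left(1+\frac{3}{2}a\psi_2^{1/2}+o(\psi_2^{1/2})\right)>0$ for $0<\psi_1<\psi_2$ sufficiently small.
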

\begin{proof}
First define $\hat{\psi}'(m)$ for $m \in {\cal R}$, which is a simple linear interpolation of 
$\hat{\psi}(m)$ at $m = 0$, 1, ... , 
and the gradient between 
$\hat{\psi}'(m-1)$ and $\hat{\psi}'(m)$ ($m = 1$, $2$, ...) 
is a constant ${\cal P}(\hat{\psi}'(m-1))=a \hat{\psi}'^b(m-1) + 
o(\hat{\psi}'^b(m-1))$ ($< 0$). 
Assume that $\tilde{\psi}(m)$ crosses $\hat{\psi}'(m)$ downward at
 $m = m'$ between $m-1$ and $m$. 
Note that $\tilde{\psi}(m') < \hat{\psi}'(m-1) = \hat{\psi}(m-1)$,
 therefore, 
\begin{eqnarray*}
\left.
\frac{d\tilde{\psi}(m)}{dm}
\right|_{m=m'}
=
(a+\nu)\tilde{\psi}^b(m')
\geq
{\cal P}(\tilde{\psi}(m'))
>
{\cal P}(\hat{\psi}'(m-1))
=
a \hat{\psi}'^b(m-1) + o(\hat{\psi}'^b(m-1)).
\end{eqnarray*}
This contradicts the assumption $\tilde{\psi}(m')$ crosses
$\hat{\psi}'(m')$ downward.  
\end{proof}

\noindent
{\bf Proof of Theorem~\ref{lemms:quantterm}}

\noindent
First evaluate the magnitude of $\tilde{U}^{\rm T}E$.
From 
\rref{eq:vari1001app}, \rref{eq:objectivef}, and \rref{eq:minv}, 
\[
\E\left[\tilde{U}^{\rm T}E\right] = 0,\;
\V\left[\tilde{U}^{\rm T}E \right] = 
\frac{1}{12}\th1^2 D^3 M^{-2} N.
\]
Then by Chebyshev's inequality,
we obtain:
\[
\mbox{\rm Prob}\left(\|\tilde{U}^{\rm T}E\|_\infty \geq
\sqrt{\frac{n}{\beta_2}\frac{1}{12}\th1^2 D^3 M^{-2}N}
\right) \leq \beta_2,
\]
for a reliability index $\beta_2$.
Combine $(\tilde{U}^{\rm T}\tilde{U})^{-1}$ and $\tilde{U}^{\rm T}E$ using the norm inequality:
\[
\|(\tilde{U}^{\rm T}\tilde{U})^{-1}\tilde{U}^{\rm T}E\|_\infty \leq
\|(\tilde{U}^{\rm T}\tilde{U})^{-1}\|_1
\|\tilde{U}^{\rm T}E\|_\infty,
\]
and this gives:
\[
\mbox{\rm Prob}\left(
\|(\tilde{U}^{\rm T}\tilde{U})^{-1}\tilde{U}^{\rm T}E\|_\infty
\leq \epsilon_1\epsilon_2
\right)
\geq
\mbox{\rm Prob}\left(
\|(\tilde{U}^{\rm T}\tilde{U})^{-1}\|_1 \leq \epsilon_1 \mbox{ and }
\|\tilde{U}^{\rm T}E\|_\infty \leq \epsilon_2
\right).
\]
Therefore we have proved the statements.
\hfill$\square$

\end{document}